\documentclass{amsart}
\usepackage{latexsym}
\usepackage{rotating}
\usepackage{amsfonts}
\usepackage{amssymb}
\usepackage{amsmath}
\usepackage{graphics, color}
\usepackage{amsthm}
\input xy
\xyoption{all}
\DeclareMathOperator*{\colim}{colim}

\DeclareMathOperator*{\tele}{telescope}
\DeclareMathOperator*{\holim}{holim}

\DeclareMathOperator*{\coprodmo}{\coprod}

\newtheorem{theorem}{Theorem}[section]
\newtheorem{lemma}[theorem]{Lemma}
\newtheorem{corollary}[theorem]{Corollary}
\newtheorem{proposition}[theorem]{Proposition}

\newtheorem{example}[theorem]{Example}
\newtheorem{definition}[theorem]{Definition}
\newtheorem{remark}[theorem]{Remark}

\newtheorem*{notation}{Notation and Conventions}

\begin{document}

\newcommand{\bbA}{\mathbb{A}}
\newcommand{\bbC}{\mathbb{C}}
\newcommand{\C}{\mathbb{C}}
\newcommand{\N}{\mathbb{N}}
\newcommand{\bbN}{\mathbb{N}}
\newcommand{\bbR}{\mathbb{R}}
\newcommand{\bbRP}{\mathbb{RP}}
\newcommand{\Z}{\mathbb{Z}}
\newcommand{\bbZ}{\mathbb{Z}}
\newcommand{\mcZ}{\mathcal{Z}}
\newcommand{\bbF}{\mathbb{F}}
\newcommand{\bbQ}{\mathbb{Q}}

\newcommand{\A}{\mathcal{A}}
\newcommand{\B}{\mathcal{B}}
\newcommand{\mcC}{\mathcal{C}}
\newcommand{\mcD}{\mathcal{D}}
\newcommand{\E}{\mathcal{E}}
\newcommand{\mF}{\mathcal{F}}
\newcommand{\G}{\mathcal{G}}
\newcommand{\mcG}{\mathcal{G}}
\newcommand{\mH}{\mathcal{H}}
\newcommand{\mcH}{\mathcal{H}} 
\newcommand{\I}{\mathcal{I}}
\newcommand{\mL}{\mathcal{L}}
\newcommand{\mcN}{\mathcal{N}}
\newcommand{\M}{\mathcal{M}}
\newcommand{\mO}{\mathcal{O}}
\newcommand{\mcP}{\mathcal{P}}
\newcommand{\mcR}{\mathcal{R}}
\newcommand{\T}{\mathcal{T}}
\newcommand{\U}{\mathcal{U}}
\newcommand{\V}{\mathcal{V}}

\newcommand{\bS}{\mathbf{S}}
\newcommand{\bd}{\mathbf{d}}

\newcommand{\bG}{\mathcal{G}_0} 
\newcommand{\sth}[1]{#1^{\mathrm{th}}}
\newcommand{\abs}[1]{\left| #1\right|}
\newcommand{\ord}[1]{\Delta \left( #1 \right)}
\newcommand{\leqs}{\leqslant}
\newcommand{\geqs}{\geqslant}
\newcommand{\heq}{\simeq}
\newcommand{\iso}{\simeq}
\newcommand{\maps}{\longrightarrow}
\newcommand{\lmaps}{\longleftarrow}
\newcommand{\injects}{\hookrightarrow}
\newcommand{\homeo}{\cong}
\newcommand{\surjects}{\twoheadrightarrow}
\newcommand{\isom}{\cong}
\newcommand{\cross}{\times}
\newcommand{\normal}{\vartriangleleft}
\newcommand{\wt}[1]{\widetilde{#1}} 
\newcommand{\fc}{\mathcal{A}_{\mathrm{flat}}} 
\newcommand{\flc}{\mathcal{A}_{\mathrm{fl}}}

\newcommand{\Rdef}{R^{\mathrm{def}}}
\newcommand{\Sym}{\textrm{Sym}}
\newcommand{\vect}[1]{\stackrel{\rightharpoonup}{\mathbf #1}}
\newcommand{\SR}{\mathcal{SR}}
\newcommand{\SRe}{\mathcal{SR}^{\mathrm{even}}}
\newcommand{\Rep}{\mathrm{Rep}}
\newcommand{\SRep}{\mathrm{SRep}}
\newcommand{\Hom}{\mathrm{Hom}}
\newcommand{\HHom}{\mathcal{H}\mathrm{om}}
\newcommand{\Lie}{\mathrm{Lie}}
\newcommand{\K}{K^{\mathrm{def}}}
\newcommand{\mK}{\mathcal{K}_{\mathrm{def}}}
\newcommand{\SK}{SK_{\mathrm{def}}}
\newcommand{\dom}{\mathrm{dom}}
\newcommand{\codom}{\mathrm{codom}}
\newcommand{\Ob}{\mathrm{Ob}}
\newcommand{\Mor}{\mathrm{Mor}}
\newcommand{\+}[1]{\underline{#1}_+}
\newcommand{\Fin}{\Gamma^{\mathrm{op}}}
\newcommand{\f}[1]{\underline{#1}}
\newcommand{\hofib}{\mathrm{hofib}}
\newcommand{\Stab}{\mathrm{Stab}}
\newcommand{\Css}{\mathcal{C}_{ss}}
\newcommand{\Map}{\mathrm{Map}}
\newcommand{\bMap}{\mathrm{Map_*}}
\newcommand{\bdMap}{\mathrm{Map_*^\delta}}
\newcommand{\flatc}{\mathcal{A}_{\mathrm{flat}}}
\newcommand{\F}[1]{\mathrm{Flag}(\vect{#1})}
\newcommand{\p}{\vect{p}}
\newcommand{\avg}{\mathrm{avg}}
\newcommand{\smsh}[1]{\ensuremath{\mathop{\wedge}_{#1}}}
\newcommand{\Vect}{\mathrm{Vect}}
\newcommand{\bv}{\bigvee}
\newcommand{\Gr}{\mathrm{Gr}}
\newcommand{\Mf}{\mathcal{M}_{\textrm{flat}}}
\newcommand{\ku}{\mathbf{ku}}
\newcommand{\Susp}{\Sigma}
\newcommand{\Id}{\textrm{Id}}
\newcommand{\id}{\textrm{Id}}
\newcommand{\xmaps}{\xrightarrow}
\newcommand{\srm}[1]{\stackrel{#1}{\maps}}
\newcommand{\srt}[1]{\stackrel{#1}{\to}}
\newcommand{\sm}{\wedge}
\newcommand{\conv}{\Rightarrow}
\newcommand{\Tor}{\textrm{Tor}}
\newcommand{\goesto}{\mapsto}
\newcommand{\nd}{\noindent}

\def\co{\colon\thinspace}

\title[Flat connections over surfaces]{The stable moduli space of flat connections over a surface}
\author[D\,A Ramras]{Daniel A. Ramras}
\address{New Mexico State University\\
Department of Mathematical Sciences\\
P.O. Box 30001\\
Department 3MB\\
Las Cruces, New Mexico 88003-8001 }
\email{ramras@nmsu.edu}
\urladdr{http://www.math.nmsu.edu/~ramras/}

\thanks{This work was partially supported by an NSF graduate fellowship and by NSF grants DMS-0353640 (RTG),  DMS-0804553, and DMS-0968766.}



 \begin{abstract}
We compute the homotopy type of the moduli space of flat, unitary connections over aspherical surfaces, after stabilizing with respect to the rank of the underlying bundle.
Over the orientable surface $M^g$, we show that this space has the homotopy type of the infinite symmetric product of $M^g$, generalizing a well-known fact for the torus.  Over a non-orientable surface, we show that this space is homotopy equivalent to a disjoint union of two tori, whose common dimension
corresponds to the rank of the first (co)homology group of the surface.  Similar calculations are provided for products of surfaces, and show a close analogy with the Quillen--Lichtenbaum conjectures in algebraic $K$--theory.
The proofs utilize Tyler Lawson's work in deformation $K$--theory, and rely heavily on Yang-Mills theory and gauge theory.
\end{abstract}

\maketitle{}


\section{Introduction}

Given a compact, connected Lie group $G$ and a manifold $X$, the moduli space of flat $G$--connections on a principal $G$--bundle $P\to X$ is the space $\fc(P)/\G(P)$, where $\fc(P)$ is the space of flat connections and $\G(P) = \textrm{Aut} (P)$ is the gauge group.  Taking the disjoint union of these spaces over representatives for the isomorphism classes of $G$--bundles, one obtains the moduli space $\Mf (X, G)$ of \emph{all} flat $G$--connections over $X$.  Holonomy provides a mapping
$\fc(P)\to \Hom(\pi_1 X, G)$
which, by Uhlenbeck compactness (see, for example, Wehrheim~\cite{Wehrheim}), induces a homeomorphism 
\begin{equation}\label{Mf}\Mf (X, G) \homeo \Hom(\pi_1 X, G)/G.\end{equation}
Hence this moduli space may also be viewed as the moduli space of representations.
We  will study the homotopy type of these moduli spaces when $X = S$ is an aspherical, compact surface and $G = U(n)$.  The inclusions $U(n)\injects U(n+1)$ allow us to stabilize with respect to the rank $n$, and our main result determines the homotopy type of $\Mf(S) = \colim_n \Mf(S, U(n))$.
It follows from (\ref{Mf}) that $\Mf(S)$ is homeomorphic to $\Hom(\pi_1 S, U)/U$, where $U = \colim_n U(n)$ is the infinite unitary group.  

Recall that for a based space $(X, *)$, the infinite symmetric product $\Sym^\infty(X)$ is the colimit $\colim_{n\to \infty} X^n/\Sigma_n$ along the maps 
$[x_1, \ldots, x_n] \mapsto [x_1, \ldots, x_n, *]$.

\vspace{.1in}
\noindent {\bf Theorem \ref{moduli}}\quad
{\sl Let $M^g$ denote an orientable surface of genus $g>0$, with a chosen basepoint $m_0\in M$.  Then there is a homotopy equivalence
$$\Mf(M^g) \heq \Sym^\infty (M^g).$$

If $\Sigma$ is an aspherical, non-orientable surface, there is a homotopy equivalence
$$\Mf(\Sigma) \heq T^{k} \coprod T^k$$
where $k = \textrm{rank} (H^1 (\Sigma))$ and $T^k$ denotes the $k$--dimensional torus.
}

As explained in Section~\ref{coarse-moduli}, this theorem is elementary in the case of the torus $M^1 = S^1\cross S^1$.  For non-orientable surfaces, the homotopy equivalence in the theorem can be described explicitly in several ways (Section~\ref{explicit}).  In particular, the inclusion
$\Hom(\pi_1 \Sigma, U(1)) \injects \Mf(M^g)$ of the group of multiplicative characters is a homotopy equivalence.  We extend these calculations to products of surfaces in Section~\ref{products}, concluding in particular that $\pi_* \Mf(X)\otimes \bbQ \isom H^*(X; \bbQ)$ for such $X$.

These moduli spaces are important in various areas of geometry and topology, due to their close relationship with (semi)-stable holomorphic vector bundles, Yang-Mills theory, Chern--Simons theory, Casson invariants, and gauge theory.  The special case of orientable surfaces has received particular attention.  Atiyah and Bott~\cite{A-B} (and subsequently Daskalopoulos~\cite{Dask} and R{\aa}de~\cite{Rade})
studied these spaces using Morse theory for the Yang--Mills functional.  In particular, Atiyah and Bott provided recurrence relations for the cohomology of the \emph{homotopy} quotient 
$$\left(\fc \right)_{h\G} \heq \Hom\left(\pi_1 M^g, U(n)\right)_{hU(n)}.$$
This work has been refined by Zagier~\cite{Zagier} and extended by Ho and Liu~\cite{Ho-Liu-or-non}. 
The rational cohomology of the moduli space of $SU(2)$ connections over an orientable surface was computed by Cappell, Lee, and Miller~\cite[Theorem 2.2]{CLM}.
In the past few years, there has been substantial interest in the case of non-orientable surfaces.  The connected components of the moduli space $\Hom(\pi_1 \Sigma, G)/G$ (with $\Sigma$ a non-orientable surface) were computed by Ho and Liu~\cite{Ho-Liu-ctd-comp-I, Ho-Liu-ctd-comp-II}, and Tom Baird~\cite{Baird} has provided computations of the Poincar{\'e} series of $\Hom\left(\pi_1 \Sigma, SU(2)\right)/SU(2)$  as well as those of $\Hom(\pi_1 \Sigma, SU(2))$ and $\Hom\left(\pi_1 \Sigma, SU(2)\right)_{hSU(2)}$.  Ho and Liu have extended Atiyah and Bott's study of Yang--Mills theory to non-orientable surfaces~\cite{Ho-Liu-non-orient, Ho-Liu-anti-perfect}, with the goal of computing these Poincar{\'e} series more generally, and progress along these lines has been made by Baird~\cite{Baird-anti, Baird-Klein}.

Our computation of the stable moduli space uses 
a new technique developed by Tyler Lawson~\cite{Lawson-simul}, which relies on a great deal of homotopy theory.  Lawson has established a close relationship between the stable moduli space $\Mf (X)$ and Carlsson's deformation $K$--theory spectrum~\cite{Carlsson-derived-rep, Lawson-prod, Ramras-excision} of $\pi_1 X$, denoted $\K(\pi_1 X)$.  In fact, these results apply to all finitely generated discrete groups $\Gamma$. 
Lawson has shown that the spectrum $\K(\Gamma)$ carries a Bott map 
$$\Susp^2 \K(\Gamma) \maps \K(\Gamma),$$
inherited from the ordinary Bott map on the connective $K$--theory spectrum $\ku \heq \K(\{1\})$.
Lawson's theorem states that the homotopy cofiber of this map is the group completion of the monoid
$$\coprodmo_{n=0}^\infty \Hom\left(\Gamma, U(n)\right)/U(n).$$
For surface groups, the group completion process corresponds (essentially) to the colimit in the definition of the stable moduli space (see Lemmas~\ref{EM} and~\ref{EM2}).

Calculations of $\pi_* \Mf (S)$ in low degrees were obtained by the author 
in~\cite{Ramras-surface} using Lawson's results in conjunction with Morse theory for the Yang--Mills functional.
  In this paper, we complete the computation using an excision theorem for connected sum decompositions (Theorem~\ref{excision-thm}), which shows that $\K (S)$ may be built up homotopically from the deformation $K$--theory of free groups.  Lawson's earlier computations for free groups~\cite{Lawson-simul} then come into play.  

The excision theorem, and the subsequent results in this paper, rest on an analysis of the natural map $B\co \Hom(\pi_1 S, U(n)) \to \bMap(B\pi_1 S, BU(n))$
sending a representation $\rho$ to the induced map $B\rho$ on classifying spaces.  In Theorem~\ref{B-highly-ctd}, we show that this map is highly connected whenever $S$ is an aspherical surface.
The study of maps between classifying spaces, and their relation to homomorphisms, plays a central role in homotopy theory.  For example, maps between Eilenberg-MacLane spaces classify cohomology operations.  A great number of developments in this area were inspired by H. Miller's proof of the Sullivan conjecture~\cite{Miller}.  For example, Dwyer and Zabrodsky~\cite{D-Z} showed that the natural map
$B\co \Hom( P, G) \maps \bMap(BP, BG)$
induces an isomorphism on $\pi_0$ whenever $P$ is a finite $P$--group and $G$ is a compact Lie group.  Previous work in this area has stayed within pure homotopy theory, but our proof of Theorem~\ref{B-highly-ctd} depends on gauge theory and on Morse theory for the Yang--Mills functional.

Our excision theorem, together with Lawson's product formula~\cite{Lawson-prod}, allows us to give explicit descriptions of the $\ku$--module structure on $\K(\pi_1 X)$ for $X$ a product of aspherical surfaces (Section~\ref{ku}).  The author established an isomorphism in \emph{positive dimensional} homotopy between the deformation $K$--theory of a surface group and the complex $K$--theory of the underlying surface~\cite{Ramras-surface}.
We show that there is in fact a morphism of $\ku$--modules $\K(\pi_1 X) \maps F (X_+, \ku)$
that induces an isomorphism in homotopy in dimensions greater than $\textrm{qcd} (X) - 2$.
Here $F$ denotes the based function spectrum, and $\textrm{qcd} (X)$ is the \emph{rational} cohomological dimension of $X$ (i.e. the largest integer $n$ such that $H^*(X; \bbQ)\neq 0$). 
These result are analogous to the theorem of Atiyah and Segal~\cite{Atiyah-Segal} relating representation of compact Lie groups to $K$--theory.
 (It is important to note that there is not always a relationship between deformation $K$--theory and complex $K$--theory: see for instance Lawson's computations for the integral Heisenberg group given in~\cite{Lawson-simul}.)

The above cohomological bound also appears in the Quillen--Lichtenbaum conjecture in algebraic $K$--theory. This conjecture states that there is an isomorphism $K_n (X; \Z/l)\to K^{\text{\'{e}t}}_n (X; \Z/l)$ from the algebraic $K$--theory of a scheme $X$ to its \'{e}tale $K$--theory when $n$ is \emph{larger} than
cd($X$) - 2.  Here cd($X$) denotes the (\'{e}tale) cohomological dimension.  For a precise formulation (and proof) of the Quillen--Lichtenbaum conjecture at the prime $2$, see \O stv\ae r and Rosenschon~\cite{OR}.  Levine's preprint~\cite{Levine} is another good source of information on this topic. 

Lawson~\cite{Lawson-prod} has given an explicit model for deformation $K$--theory, and its ring structure, in terms of $\Gamma$--spaces (in the sense of Segal).
An alternate approach to the results in this paper would be to construct a (multiplicative) map of $\Gamma$--spaces from Lawson's explicit model to some analogous model for topological $K$--theory.  One would hope to show that such a map induces a weak equivalence of the associated $\ku$--algebras whenever the connectivity of the map $B$ goes to infinity with $n$.  Since our present interest is in the stable moduli space, we prefer the more elementary approach based on excision. 

We will now briefly explain the relationship between this paper and the author's previous article~\cite{Ramras-surface}.  The main result of~\cite{Ramras-surface} was an isomorphism (equivariant under the action of the mapping class group) $\K_*(\pi_1 S) \isom K^{-*} (S)$ (for $*>0$ in the orientable case, and for $*\geqs 0$ in the non-orientable case).  The maps underlying these isomorphisms involved Sobolev spaces of connections, and were not sufficiently natural to identify the Bott map or to establish excision (see the introduction to Section~\ref{excision}).  However, the Yang--Mills theory from~\cite{Ramras-surface} still plays a key role in this paper, appearing here as Proposition~\ref{fc-conn}.  (The analytical results regarding holonomy also play a role in Lemma~\ref{fibs}.)  The results in~\cite{Ramras-surface} lead to computations of $\pi_* \Mf(S)$ in low degrees, and we refer back to these calculations in proving our main result.  We note that the abstract isomorphism $\K_*(S) \isom K^{-*} (S)$ can be deduced from excision (see Remark~\ref{Kdef*}), but the explicit maps used in~\cite{Ramras-surface} may prove to be useful for other reasons.

This paper is structured as follows.  In Section~\ref{ex-sec}, we reduce the excision problem in deformation $K$--theory to a question about representation spaces.  In Section~\ref{universal}, we study the above map $B$.  These results are combined in Section~\ref{excision} to prove an excision theorem for connected sum decompositions of surfaces.  In Section~\ref{coarse-moduli}, we review Lawson's results on the Bott map in deformation $K$--theory and prove our main theorem.  Computations for products of surfaces appear in Section~\ref{ku}, where we also discuss some of the basic features of the theory of $\bS$--modules needed for these computations.

\begin{notation}
Throughout this paper, all surfaces $S$ will be compact.  Apart from our discussion of connected sum decompositions, the theorems in this paper only deal with aspherical surfaces, i.e. those with contractible universal cover. Said another way, we require that $B\pi_1 S \heq S$ (of course, the only compact surfaces that are not aspherical are the sphere and the real projective plane).   We use $M^g$ to denote an orientable surface of genus $g$, and if $\Sigma$ is a non-orientable surface, then the genus of its orientable double cover will be denoted by $\wt{g}$.  

We implictly work in the category of compactly generated spaces, meaning that we replace the topology on any space in question with the compactly generated topology.  In particular, products and mapping spaces (between compactly generated spaces) are given their associated compactly generated topologies.  The Sobolev spaces of connections and gauge transformations appearing in this paper are (presumably) not compactly generated, and we will phrase our continuity arguments in the ordinary topology, implicitly replacing the topologies after the fact.
\end{notation}

\noindent
 {\bf Acknowledgments.}
Some of the results in Section 2 first appeared in my Stanford University Ph.D. thesis~\cite{Ramras-thesis}, directed by Gunnar Carlsson.  I would like to thank Tyler
Lawson for helpful discussions regarding Section~\ref{ku}, and for his comments on an earlier draft.  I would also like to thank the referee, whose comments and suggestions greatly improved the exposition, and Sean Lawton for a helpful discussion regarding Lemma 5.7.
This article was written at Columbia University, and I would like to thank the Columbia mathematics department for its hospitality.


\section{Excision in deformation $K$--theory}$\label{ex-sec}$

In this section we will study the behavior of deformation K--theory on amalgamated products.  
We begin by briefly reviewing deformation $K$--theory.  For further details and discussion, see~\cite{Lawson-prod, Ramras-excision, Ramras-surface}.

\subsection{The deformation $K$-theory spectrum and its zeroth space}$\label{zeroth-sp}$

The deformation $K$--theory spectrum $\K (\Gamma)$ is the group completion of the topological monoid
of homotopy orbit spaces
$$\Rep(\Gamma)_{hU} := \coprodmo_{n=0}^{\infty}  \Hom\left(\Gamma, U(n)\right)_{hU(n)} 
= \coprodmo_{n=0}^{\infty} EU(n)\cross_{U(n)} \Hom(\Gamma, U(n)),$$
and may be thought of as the homotopical analogue of the representation ring $R(\Gamma)$ (the ordinary group completion of the discrete monoid of isomorphism classes of representations).
For example, when $\Gamma = \{1\}$, deformation $K$--theory is simply the group completion of the monoid
$\coprod BU(n)$, and hence $\K(\{1\}) = \mathbf{ku}$, the connective $K$--theory spectrum.

It is convenient to introduce the topological monoid
$$\Rep(\Gamma) = \coprodmo_{n=0}^{\infty} \Hom(\Gamma, U(n)).$$
We say $\Rep(\Gamma)$ is \emph{stably group-like} with respect to a representation $\psi_0$ if for every representation $\rho$ there is representation $\rho^{*}$ such that $\rho\oplus \rho^{*}$ lies in the same component as $n \psi_0 = \psi_0 \oplus \cdots \oplus \psi_0$.  For the surface groups studied in this paper, $\Rep(-)$ is stably group-like with respect to the trivial representation $1\in \Hom(-, U(1))$ (see Ramras~\cite[Section 4]{Ramras-surface} or Ho and Liu~\cite{Ho-Liu-ctd-comp-II}), and we simply say that $\Rep(-)$ is stably group-like.

The following result, which is an application of the McDuff--Segal group completion theorem~\cite{McDuff-Segal}, gives a concrete model for the zeroth space of the deformation $K$--theory spectrum under the assumption that $\Rep(\Gamma)$ is stably group-like with respect to some representation $\psi_0$.

\begin{proposition}[Ramras~\cite{Ramras-excision}]$\label{gp-completion}$
Let $\Gamma$ be a finitely generated discrete group such that $\Rep(\Gamma)$ is stably group-like with respect to a representation $\psi_0 \in \Hom(\Gamma,U(k))$.
Then there is a zig-zag of weak equivalences between the zeroth space of $\K(\Gamma)$ and the mapping telescope
$$\tele \left( \Rep(\Gamma)_{hU} \stackrel{\oplus \psi_0}{\maps} \Rep(\Gamma)_{hU} 
			\stackrel{\oplus\psi_0}{\maps} \cdots \right),
$$
where $\oplus \psi_0$ denotes block sum with the point $[*_k, \psi_0]\in \Hom\left(\Gamma, U(k)\right)_{hU(k)}$.  \end{proposition}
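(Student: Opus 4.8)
The plan is to apply the McDuff–Segal group completion theorem to the topological monoid $M := \Rep(\Gamma)_{hU}$, whose block-sum operation makes it a homotopy-commutative topological monoid, and then to identify the resulting group completion with the displayed mapping telescope. First I would recall that, by definition, the spectrum $\K(\Gamma)$ is the group completion of $M$, so its zeroth space is weakly equivalent to $\Omega B M$ (or to $\mathbb{Z}\times BM^+$, up to the usual identifications). The McDuff–Segal theorem computes the homology of $\Omega B M$ as the localization $H_*(M)[\pi_0(M)^{-1}]$, provided $\pi_0(M)$ acts on $H_*(M)$ by maps that become isomorphisms after a telescope — which is automatic once we can cofinally invert a single element. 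This is exactly where the stably group-like hypothesis enters: since $\Rep(\Gamma)$ is stably group-like with respect to $\psi_0\in \Hom(\Gamma, U(k))$, for every $\rho$ there is $\rho^*$ with $\rho\oplus\rho^*$ in the same component as $n\psi_0$, so in $\pi_0(M)$ the class of $[\ast_k,\psi_0]$ is cofinal in the sense that inverting it inverts all of $\pi_0(M)$.

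Next I would make the telescope identification precise. Because $[\ast_k,\psi_0]$ is cofinal in $\pi_0(M)$, the localization $H_*(M)[\pi_0(M)^{-1}]$ agrees with the colimit of
$$H_*(M) \xrightarrow{\oplus\psi_0} H_*(M) \xrightarrow{\oplus\psi_0} \cdots,$$
which is $H_*$ of the mapping telescope $\tele(M \xrightarrow{\oplus\psi_0} M \xrightarrow{\oplus\psi_0} \cdots)$. Thus the natural map from this telescope to $\Omega B M$ (obtained from the structure maps $M\to\Omega BM$, which intertwine $\oplus\psi_0$ with the translation that becomes invertible in $\Omega BM$) is a homology isomorphism. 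To upgrade this to a weak equivalence I would check that both sides are simple (or at least that $\pi_1$ acts nilpotently): $\Omega BM$ is an infinite loop space, hence simple, and the telescope is a colimit of the spaces $M$, each of which is a disjoint union of spaces of the form $\Hom(\Gamma,U(n))_{hU(n)}$ — these have the homotopy type of CW complexes and their fundamental groups act suitably, so the telescope is nilpotent (indeed simple) as well. A homology isomorphism between simple spaces is a weak equivalence, giving the desired zig-zag.

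The main obstacle I anticipate is precisely the verification that the group completion theorem applies in the form needed here — i.e. checking the hypotheses of McDuff–Segal for this particular monoid (that $M$ has the homotopy type of a CW complex, that $\pi_0(M)$ is in the center of the Pontryagin ring up to the usual homotopy-commutativity, and that translation by $\oplus\psi_0$ induces, in the colimit, an isomorphism on homology) and then controlling the fundamental group so that the homology equivalence promotes to a weak equivalence. Once those technical points are in place, the statement follows formally; since this is exactly the content of the cited result \cite{Ramras-excision}, I would simply invoke that reference for the details, noting only that the key input is the stably group-like hypothesis on $\Rep(\Gamma)$ which makes $[\ast_k,\psi_0]$ cofinal.
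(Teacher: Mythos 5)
Your proposal is correct and follows exactly the route the paper intends: the paper gives no proof of this proposition, citing \cite{Ramras-excision} and noting only that it is an application of the McDuff--Segal group completion theorem, which is precisely your strategy (cofinality of $[\ast_k,\psi_0]$ in $\pi_0$ coming from the stably group-like hypothesis, identification of the localized homology with the homology of the telescope, and promotion of the homology equivalence to a weak equivalence using simplicity of the infinite loop space and of the telescope). The one point to treat with care is that translation by $\oplus\psi_0$ on $\Rep(\Gamma)_{hU}$ is intertwined with multiplication by a (non-identity) invertible component of $\Omega B(\Rep(\Gamma)_{hU})$ only up to homotopy, so one does not get a single map off the telescope on the nose --- this is exactly why the statement asserts a zig-zag, and the homology-fibration formulation of McDuff--Segal (pulling back $E M\to BM$ over the telescope) produces that zig-zag cleanly.
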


\subsection{Reduction of excision to representation spaces}$\label{reduction}$

In this section we consider the behavior of deformation $K$--theory on amalgamated products of groups, making crucial use of Proposition~\ref{gp-completion}.    Given an amalgamation diagram of groups, applying deformation $K$-theory results in a pull-back diagram of spectra.  An excision theorem states that the natural map
$$\Phi\co \K_* ( G*_K H) \maps \pi_* \mathrm{holim} \Big( \K (G) \maps \K (K) \longleftarrow  \K (H) 
\Big)$$
is an isomorphism (at least in a range of dimensions), where holim denotes the homotopy pullback.  Throughout this section we will, by abuse of notation, consider the symbol $G*_K H$ to denote an amalgamation \emph{diagram} of groups, as well as the pushout of this diagram.

Our goal is to reduce the excision problem to a question about representation spaces.  We wish to show that information about the maps
$$\xymatrix{ \Hom(G*_K H, U(n)) \ar[d] \\
	 \holim \Big(\Hom(G, U(n)) \maps
								      		 \Hom(K, U(n)) \lmaps
					      					 \Hom(H, U(n))
						    \Big)
		}
$$
allows us to deduce information about the map
$$ \K (G*_K H)
	\stackrel{\Phi}{\maps} \holim \Big(\K(G) \maps
							\K(K) \lmaps
					      		\K(H)
						    \Big).
$$

In order to pass from representation spaces to deformation $K$-theory, we first need to deduce results about homotopy orbit spaces.  This will be done by studying the fibration $X\to X_{hG} \to BG$ associated to a $G$-space $X$.  We need a simple fact about fibrations and homotopy limits.  In order to state the result in an appropriate form, we make the following definition.

\begin{definition}$\label{k,l-ctd}$
We call a map $f\co (X, x_0) \to (Y, y_0)$ of based spaces (or spectra) $(l, k)$--connected ($0\leqs l \leqs k$) if
$f_* \co \pi_n (X, x_0)\to \pi_n (Y, y_0)$ is an isomorphism for $l\leqs n < k$, a surjection for $n = k$, and an injection for
$n = l-1$.  We call a commutative square of based spaces (or spectra)
$$\xymatrix{
	X \ar[r]^\beta \ar[d]^\alpha & Z \ar[d]^\eta \\
	Y \ar[r]^\gamma & W
		}
$$	
$(l, k)$--cartesian if the natural map $X\to \holim \left(Y\maps Z \longleftarrow W \right)$
is $(l, k)$--connected.  If this natural map is a weak equivalence, we call the square \emph{homotopy cartesian}.  Here the basepoint in the homotopy limit is the triple $(y_0, c_{w_0}, z_0)$, where $c_{w_0}$ denotes the constant path.
\end{definition}

We allow $k = \infty$ and $l = 0$, and we set $\pi_{-1} Z = 0$ for any based space $Z$, so that $(0,k)$--connectivity is the standard notion of $k$--connectivity.
The above definition is useful since in certain cases (e.g. connected sum decompositions of Riemann surfaces) excision will hold only above some dimension.  We will, in the end, be considering infinite loop spaces, where all components are homotopy equivalent.  Thus it will suffice to work with a single basepoint, even though many of the spaces below are disconnected.

We now recall that associated to any homotopy pullback (of based spaces or spectra)
$$H = \holim (Y \stackrel{\gamma}{\maps} W \stackrel{\eta}{\lmaps} Z)$$
there is a natural long exact Mayer--Vietoris sequence in homotopy:
\begin{equation}\label{M-V}
\cdots \stackrel{\partial}{\maps} \pi_* H \maps \pi_* Y \oplus \pi_* Z \xmaps{\gamma_* - \eta_*}
\pi_* W \stackrel{\partial}{\maps} \pi_{*-1} H \maps \cdots.
\end{equation}
The map $\pi_* H \to \pi_*Y \oplus \pi_* Z$ is simply the direct sum of the maps induced by the natural projections
$H \to Y$ and $H \to Z$, and the boundary map is induced by the natural map
$\Omega W \to H$.
Any commutative diagram
\begin{equation}\label{MV-diag}
\xymatrix{ Y \ar[r] \ar[d] & W \ar[d] & Z \ar[l] \ar[d] \\
		   Y' \ar[r] & W' & Z' \ar[l]
		   }
\end{equation}
induces a map
$$\holim (Y\to W\leftarrow Z) \maps \holim (Y' \to W' \leftarrow Z'),$$
and there is an associated commutative diagram of Mayer--Vietoris sequences.  It follows that if the vertical maps in (\ref{MV-diag}) are weak equivalences, so is the induced map between the homotopy pullbacks.  We will use this fact several times.

Given a homotopy cartesian square as in Definition~\ref{k,l-ctd}, we may replace the terms $\pi_*H$ in the Mayer--Vietoris sequence (\ref{M-V}) by $\pi_*X$, obtaining a natural, long exact Mayer--Vietoris sequence
\begin{equation}\label{M-V2}\cdots \stackrel{\partial}{\maps} \pi_* X \xmaps{\alpha_*\oplus \beta_*} \pi_* Y \oplus \pi_* Z \xmaps{\gamma_* - \eta_*}
\pi_* W \stackrel{\partial}{\maps} \pi_{*-1} X  \xmaps{\alpha_*\oplus \beta_*} \cdots.
\end{equation}
Similarly, any $(l,k)$--cartesian square has an associated (partial) Mayer--Vietoris sequence, which exists in dimensions $l\leqs * \leqs k$.\footnote{The converses of these statements also hold, in a sense.  If a square of based spaces admits a Mayer--Vietoris sequence of the form (\ref{M-V2}) with sufficiently natural boundary maps, then the square is homotopy cartesian.  Here ``sufficiently natural" simply means that the square relating these boundary maps to the boundary maps in the sequence (\ref{M-V}) must commute; the 5-lemma then shows that the diagram is homotopy cartesian.  A similar statement can be made for partial Mayer--Vietoris sequences and $(l,k)$--cartesian diagrams.}

\begin{lemma}$\label{htpy-limits}$
Let $\G$ be a connected group and let $X$, $Y$, and $Z$ be based $\G$--spaces (we do not assume $\G$ fixes the basepoints).  Then a commutative square of based, equivariant maps  
$$\xymatrix{
	X \ar[r] \ar[d] & Z \ar[d] \\
	Y \ar[r] & W
		}
$$	
is $(l, k)$--cartesian if and only if the diagram of homotopy orbit spaces
$$\xymatrix{
	X_{h\G} \ar[r] \ar[d] & Z_{h\G} \ar[d] \\
	Y_{h\G} \ar[r] & W_{h\G}
		}
$$
is $(l, k)$--cartesian.  (Here we use $[*, x_0]$ as the basepoint of $X_{h\G}$, where $* \in E\G$ is a chosen basepoint and $x_0$ is the basepoint in $X$, and similarly for the other homotopy orbit spaces.)
\end{lemma}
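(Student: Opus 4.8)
My approach is to deduce the lemma from the Borel fibration together with one elementary principle about maps of fibration sequences. Fix a model of the universal bundle $E\G\maps B\G$ that is an honest fiber bundle (for instance Milnor's infinite join, or the geometric realization of the bar construction). Then for every $\G$--space $A$ the Borel construction $A_{h\G}=E\G\times_\G A$ sits in a fiber bundle
$$A\maps A_{h\G}\maps B\G,$$
with fiber over the basepoint $[*]\in B\G$ identified with $A$ via $a\mapsto[*,a]$, and this is functorial in $A$: a based equivariant map $A\to A'$ induces a map of bundles over $B\G$ covering $\id_{B\G}$, restricting to the given map on fibers and sending $[*,a_0]$ to $[*,a_0']$. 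The principle I will use is the following \emph{fibration comparison}: given a map of fibration sequences $(F_1\maps E_1\maps B)\to(F_2\maps E_2\maps B)$ over a common base which covers $\id_B$ and restricts to $f\co F_1\to F_2$ on fibers, the map $E_1\to E_2$ is $(l,k)$--connected if and only if $f$ is. This follows from the five lemma (and, at the edges $n=k$ and $n=l-1$, the appropriate four lemma) applied to the map of long exact homotopy sequences of the two fibrations, using that the induced map $\pi_* B\to\pi_* B$ is the identity; the case $l=0$ uses the exact sequence of pointed sets near the bottom together with the convention $\pi_{-1}=0$.

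The one extra point to record is that $(-)_{h\G}$ commutes with homotopy pullbacks. Given based equivariant maps $Y\maps W\lmaps Z$, let $H=\holim(Y\maps W\lmaps Z)$; its underlying space is the ordinary homotopy pullback and it carries the evident $\G$--action. Applying $(-)_{h\G}$ and using functoriality, the spaces $Y_{h\G},W_{h\G},Z_{h\G}$ all map compatibly to $B\G$, so the homotopy pullback $P:=\holim(Y_{h\G}\maps W_{h\G}\lmaps Z_{h\G})$ acquires a map to $B\G$. Because homotopy limits commute with one another, the homotopy fiber of $P\maps B\G$ over $[*]$ is the homotopy pullback of the fibers, i.e.\ it is $\holim(Y\maps W\lmaps Z)=H$; thus $H\maps P\maps B\G$ is a fibration sequence over $B\G$. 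The natural map $H_{h\G}\to P$ is a map from the Borel fibration sequence of $H$ to this one, over $\id_{B\G}$ and restricting to $\id_H$ on fibers, so by the fibration comparison principle it is a weak equivalence.

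With these in hand, the lemma is quick. The commutative square of $\G$--spaces provides a natural equivariant map $X\maps H=\holim(Y\maps W\lmaps Z)$, and by Definition~\ref{k,l-ctd} the square is $(l,k)$--cartesian precisely when this map is $(l,k)$--connected. Apply the fibration comparison principle to the map of Borel fibration sequences $(X\maps X_{h\G}\maps B\G)\to(H\maps H_{h\G}\maps B\G)$ induced by $X\to H$: it covers $\id_{B\G}$, restricts to $X\to H$ on fibers, and equals $X_{h\G}\to H_{h\G}$ on total spaces, so $X\to H$ is $(l,k)$--connected if and only if $X_{h\G}\to H_{h\G}$ is. Postcomposing with the equivalence $H_{h\G}\heq P=\holim(Y_{h\G}\maps W_{h\G}\lmaps Z_{h\G})$ from the previous step, this last condition says exactly that the square of homotopy orbit spaces is $(l,k)$--cartesian. (Throughout, one takes $[*,x_0]$ as the basepoint of $X_{h\G}$ as in the statement, and $(y_0,c_{w_0},z_0)$ as the basepoint of $H$ and of $P$, so all maps in sight are based and compatible even though $\G$ need not fix the points $x_0,y_0,z_0$.) I expect the genuine work to be in the middle paragraph: identifying $\hofib(P\maps B\G)\heq H$ and checking that the comparison map $H_{h\G}\to P$ restricts to $\id_H$ on fibers, which rests on the standard but somewhat fiddly compatibility of homotopy fibers with homotopy limits and on tracking basepoints carefully.
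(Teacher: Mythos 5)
Your proof is correct and follows essentially the same route as the paper's: both arguments rest on the Borel fibration $T \maps T_{h\G} \maps B\G$, on identifying the homotopy fiber of $\holim\left(Y_{h\G} \to W_{h\G} \leftarrow Z_{h\G}\right) \maps B\G$ with $\holim\left(Y \to W \leftarrow Z\right)$ via the commutation of homotopy limits, and on a five-lemma comparison of the resulting long exact sequences. The only point you should make explicit is where the hypothesis that $\G$ is connected enters: your ``fibration comparison principle'' is not valid over an arbitrary base in degrees $1$ and $0$ (where the sequence degenerates to pointed sets and the $\pi_1$ of the base acts on $\pi_0$ of the fiber), but since $\pi_0 B\G = \pi_1 B\G = 0$ the low-degree terms vanish and the argument goes through, exactly as the paper notes.
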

{\bf Proof.}
Consider the commutative diagram of fibrations
$$\xymatrix@!0{
	{} & Z \ar[rr] \ar'[d][dd] & {} &  Z_{h\G} \ar[rr] \ar'[d][dd] & {} & B\G \ar[dd] \\
	X \ar[ur] \ar[rr] \ar[dd] & {} &X_{h\G} \ar[ur] \ar[rr] \ar[dd] & {}
		& B\G  \ar[ur] \ar[dd] \\
	{} & W \ar'[r][rr] & {} & W_{h\G}\ar'[r][rr] & {} &B\G, \\
	Y \ar[ur] \ar[rr] & {} & Y_{h\G} \ar[ur] \ar[rr] & {}
		& B\G  \ar[ur]
			}
$$
in which all the maps $B\G\to B\G$ are the identity.
Let $\widetilde{B\G}$ denote the homotopy limit
$\holim \left( B\G \stackrel{=}{\to} B\G \stackrel{=}{\leftarrow} B\G \right)$, and note that the natural map $B\G\stackrel{\heq}{\to} \widetilde{B\G}$ is a weak equivalence.
Let
$$X \stackrel{\phi}{\maps} \holim \left( Y \to W \leftarrow Z \right) \mathrm{\,\, and \,\,}
X_{h\G} \stackrel{\Phi}{\maps} \holim \left( Y_{h\G} \to W_{h\G} \leftarrow Z_{h\G} \right)$$
denote the natural maps.
Consider the diagram
$$\xymatrix{
	X \ar[r] \ar[d]^{\phi} & X_{h\G} \ar[r] \ar[d]^{\Phi} & B\G \ar[d]^{\heq} \\
	{ \holim  \left( Y\to W \leftarrow Z \right)} \ar[r] \ar@{..>}[d]^{\iota}
	 	&  {\holim \left( Y_{h\G} \to W_{h\G} \leftarrow Z_{h\G} \right)} \ar[r]^(.75){\alpha} \ar[d]^{\heq}
		& {\widetilde{B\G}}, \\
	{\hofib (\alpha)} \ar[r] & P_{\alpha} \ar[ur]
		}
$$
in which $P_{\alpha}$ denotes the total space of the fibration associated to $\alpha$, and $\iota$ exists because the composite along the middle row is constant.  We claim that the map $\iota$ is a weak equivalence, i.e. that the middle row is a homotopy fibration.  Assuming this, the lemma follows easily by applying the (strong) 5--Lemma to the resulting diagram of long exact sequences.  (Note that since we have assumed $\G$ is connected, $\pi_1 B\G = 0$.  Hence these long exact sequences can be cut off after the $\pi_1$ stage, and we need not worry about applying the five lemma to a diagram containing sets.  Moreover, $\pi_0$ is easily dealt with since $\pi_1 B\G = \pi_0 B\G = 0$ implies that the maps on $\pi_0$ induced by $\phi$ and $\Phi$ are isomorphic.)

To see that $\iota$ is a weak equivalence, note for any $\G$--space $T$, the natural inclusion
$T \injects \hofib( T_{h\G} \to B\G)$ is a weak equivalence, and hence the induced map
$$\holim \left( \vcenter{\xymatrix{Y\ar[d]\\ W \\ Z\ar[u]} } \right)
\stackrel{\Psi}{\maps}
\holim \left( \vcenter{\xymatrix{
				\hofib(Y_{h\G}\to B\G) \ar[d] \\
				\hofib(W_{h\G}\to B\G) \\
				\hofib (Z_{h\G}\to B\G) \ar[u]
					     }
				}
	\right)
$$
is a weak equivalence as well.
Now $\iota$ is simply the composition of $\Psi$ with the natural homeomorphism
$$\holim \left( \vcenter{\xymatrix{
					\hofib(Y_{h\G}\to B\G) \ar[d] \\
					\hofib(W_{h\G}\to B\G) \\
					\hofib (Z_{h\G}\to B\G) \ar[u]
						}
				}					
		\right)
	\stackrel{\homeo}{\maps}
		\hofib \left( \holim  \left( \vcenter{\xymatrix{
								 Y_{h\G}\ar[d]\\ W_{h\G} \\ Z_{h\G} \ar[u]
								 		}
								}
						\right)
				\srm{\alpha}
				\holim \left( \vcenter{\xymatrix{B\G \ar[d]^{\begin{turn}{90} $=$ \end{turn}}\\
									       B\G \\
									       B\G \ar[u]_{\begin{turn}{90} $=$ \end{turn}}
									      }
								}
				      \right)
			\right),
$$
so $\iota$ is a weak equivalence as well.
$\hfill \Box$

\vspace{.15in}
We are now ready to discuss our reduction of the excision problem to representation spaces.  Given an amalgamation diagram
$$\xymatrix{
       {K} \ar[r]^{i_1} \ar[d]^{i_2}
       & {H} \ar[d]^{f_1} \\
       {G} \ar[r]^(.36){f_2}
       & {G*_K H,}
                     }
$$
we say that $\Rep(G*_K H)$ is \emph{compatibly stably group-like} if there is a representation 
$\psi \co G*_K H \to U(n)$ such that $\Rep(G*_H K)$ is stably group-like with respect to $\psi$ and the monoids $\Rep(G)$, $\Rep(H)$, and $\Rep(K)$ are stably group-like with respect to the various restrictions of $\psi$.  

\begin{proposition}$\label{reduction-prop}$
Assume that $\Rep(G*_K H)$ is compatibly stably-grouplike.
If the natural map
$$\xymatrix{ \Hom(G*_K H, U(n)) \ar[d]^\phi \\
	 \holim \left( \Hom(G, U(n)) \stackrel{i_1^*}{\maps}
				\Hom(K, U(n)) \stackrel{i_2^*}{\lmaps}
				\Hom(H, U(n))
		\right)
		}
$$
is $(l, k)$--connected for infinitely many $n$, then the natural map
$$\K (G*_K H) \stackrel{\Phi}{\maps}
	 \holim \left( \K (G)\stackrel{i_1^*}{\maps}  \K (K) \stackrel{i_2^*}{\lmaps}
					      		 \K (H)
		\right)
$$
is $(l, k)$--connected as well.  (We use the trivial representations as our basepoints.)
\end{proposition}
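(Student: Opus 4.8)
The plan is to translate the asserted $(l,k)$--connectivity of $\Phi$ into the hypothesized $(l,k)$--connectivity of $\phi$ by first replacing deformation $K$--theory with the telescope model of Proposition~\ref{gp-completion}, and then using Lemma~\ref{htpy-limits} to descend from the homotopy orbit spaces $\Hom(-,U(n))_{hU(n)}$ appearing in that model to the representation spaces themselves, where the hypothesis lives. Concretely, the conjugation action of $U(n)$ on each representation space makes the restriction maps equivariant and fixes the trivial representation, so Lemma~\ref{htpy-limits} applies with $\G=U(n)$; and since $\phi$ is exactly the comparison map from the strict pullback $\Hom(G*_KH,U(n))=\Hom(G,U(n))\times_{\Hom(K,U(n))}\Hom(H,U(n))$ to the corresponding homotopy pullback, the hypothesis says this square of representation spaces is $(l,k)$--cartesian for infinitely many $n$, whence Lemma~\ref{htpy-limits} gives that the square of homotopy orbit spaces
$$\xymatrix{ \Hom(G*_KH,U(n))_{hU(n)} \ar[r]\ar[d] & \Hom(H,U(n))_{hU(n)} \ar[d] \\ \Hom(G,U(n))_{hU(n)} \ar[r] & \Hom(K,U(n))_{hU(n)} }$$
is $(l,k)$--cartesian for those same $n$.

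Next I would assemble over $n$ and group complete. All four restriction maps preserve rank, so taking the disjoint union over $n$ commutes with the homotopy pullback (a path in a disjoint union stays in one summand), and the square of topological monoids $\Rep(-)_{hU}$ is therefore the $n$--indexed disjoint union of the rank--$n$ squares above. Block sum with $\psi$ and its restrictions to $G$, $H$, $K$ gives a commuting ladder of these squares, and Proposition~\ref{gp-completion} identifies the mapping telescopes along the four columns of this ladder with the zeroth spaces of $\K(G*_KH)$, $\K(G)$, $\K(H)$, $\K(K)$. A mapping telescope is a sequential (hence filtered) homotopy colimit and a homotopy pullback is a finite homotopy limit, so the two commute; hence the telescope along the column over $\holim(\Rep(G)_{hU}\to\Rep(K)_{hU}\lmaps\Rep(H)_{hU})$ computes $\Omega^\infty\holim(\K(G)\to\K(K)\lmaps\K(H))$, and under these identifications the map $\Phi$ on zeroth spaces becomes the map of telescopes induced by $\coprod_n \phi_n$.

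Finally I would read off the connectivity. The homotopy groups of each of these telescopes, based at the trivial representation, are filtered colimits of the homotopy of path components of representation spaces under block sum by multiples of $\psi$. Because $\Rep(-)$ is stably group-like with respect to $\psi$ and its restrictions, every path component at every rank maps, after a single block sum, into the component of some multiple of $\psi$; consequently the path components occurring at those ranks $n$ for which the orbit square above is $(l,k)$--cartesian are cofinal in each of these filtered systems. Since a cofinal subsystem computes the same colimit, and a filtered colimit of a natural transformation that is objectwise $(l,k)$--connected is again $(l,k)$--connected, the map of telescopes --- and hence $\Phi$ on zeroth spaces, and hence, as $\K(\Gamma)$ is connective so that in the range $l\leqs *\leqs k$ the homotopy of the spectrum agrees with that of its zeroth space, the map $\Phi$ of spectra --- is $(l,k)$--connected. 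I expect this last step to be the main obstacle: the hypothesis supplies $(l,k)$--connectivity only at infinitely many ranks rather than at all of them, so one must invoke the stably-group-like hypothesis to see that these ranks already suffice to compute the telescopes' homotopy, and check that the resulting cofinality is compatible with the block-sum maps --- equivalently, with the boundary maps in the Mayer--Vietoris sequences of Definition~\ref{k,l-ctd} --- so that the $(l,k)$--connectivity propagates correctly through those sequences.
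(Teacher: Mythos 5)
Your proposal follows the paper's proof essentially step for step: Lemma~\ref{htpy-limits} to pass from the squares of representation spaces to the squares of homotopy orbit spaces, Proposition~\ref{gp-completion} to model the four spectra by block-sum telescopes, and commutation of the (filtered) telescope with the (finite) homotopy pullback together with cofinality of the ranks where the hypothesis holds to conclude. The only difference is one of emphasis: you are more explicit about the colimit/limit interchange and you flag the cofinality issue directly, whereas the paper spends its extra care on $\pi_0$, decomposing each telescope as $\coprod_{k}\bbZ\times\tele_k(\Gamma,\psi)$ so that the disjoint-union decomposition is visibly preserved by all the maps before invoking Lemma~\ref{htpy-limits} once more.
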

\begin{proof} Since $\Rep(G*_K H)$ is compatibly stably group-like with respect to a representation 
$\psi: G*_K H \to U(n)$, Proposition~\ref{gp-completion} allows us to replace the diagram of deformation $K$--theory spectra with the following diagram of infinite mapping telescopes:
\begin{equation}\label{telescopes}
\xymatrix{
	\tele \limits_{\stackrel{\oplus \psi}{\maps}} \left( \Rep(G*_K H)_{hU} \right) \ar[r] \ar[d]
	& \tele \limits_{\stackrel{\oplus \psi|_H}{\maps}} \left( \Rep(H)_{hU} \right) \ar[d] \\
	\tele \limits_{ \stackrel{\oplus \psi|_G}{\maps}} \left( \Rep(G)_{hU} \right) \ar[r]
	& \tele \limits_{\stackrel{\oplus \psi|_K}{\maps} } \left( \Rep(K)_{hU} \right).
		}
\end{equation}
On positive-dimensional homotopy groups, the desired conclusions about the map $\Phi$ now follow from Lemma~\ref{htpy-limits}.  To handle $\pi_0$, we must be slightly more careful.

Each of the telescopes in (\ref{telescopes}) is fact a disjoint union of simpler telescopes.  Given a natural number $k$, a group $\Gamma$, and a unitary representation $\rho$ of $\Gamma$, we define
$\tele_k (\Gamma, \rho)$ to be the mapping telescope
$$\tele \left( \Hom\left(\Gamma, U(k)\right)_{hU(k)}
\stackrel{\oplus \rho}{\maps} \Hom\left(\Gamma, U(k+\dim(\rho))\right)_{hU(k+\dim (\rho))}
\stackrel{\oplus \rho}{\maps} \cdots \right).
$$
By collapsing stages of these telescopes, we see that the canonical inclusions
$$\textrm{telescope}_{k+m\dim (\rho)} (\Gamma, \rho) \injects \textrm{telescope}_k (\Gamma, \rho)$$
are homotopy equivalences for $m = 1, 2, \ldots$.
Hence the inclusion
$$\tele_{\stackrel{\oplus \rho}{\maps}} \left( \Rep(\Gamma)_{hU} \right)
\injects \coprodmo_{k = 0}^{\dim \rho - 1} \bbZ \cross \textrm{telescope}_k (\Gamma, \rho)
$$
is a homotopy equivalence.
Applying these observations to the diagram (\ref{telescopes}), we obtain a \emph{commutative} cube linking that square of telescopes to the square
\begin{equation}\label{telescopes2}
\xymatrix{
	\coprod \limits_{k=0}^{\dim (\rho) - 1} \bbZ \cross \tele_k (G*_K H, \psi) \ar[r] \ar[dd]
	& \coprod \limits_{k=0}^{\dim (\rho) - 1} \bbZ \cross \tele_k (H, \psi|_H)  \ar[dd] \\
	& \\
	\coprod \limits_{k=0}^{\dim (\rho) - 1} \bbZ \cross \tele_k (G, \psi|_G)  \ar[r]
	& \coprod \limits_{k=0}^{\dim (\rho) - 1} \bbZ \cross \tele_k (K, \psi|_K).
		}
\end{equation}
Since the maps in (\ref{telescopes2}) preserve both the disjoint union over $k$ and the disjoint union over $\bbZ$, the homotopy pullback of this diagram is precisely
$$\coprodmo_{k=0}^{\dim (\rho) - 1} \bbZ \cross
\holim \left( \vcenter{\xymatrix{
       							\tele_k (H, \psi|_H) \ar[d]\\
					      		\tele_k (K, \psi|_K) \\
					      		\tele_k (G, \psi|_G)    \ar[u]
								}} \right)
$$
and the result follows from Lemma~\ref{htpy-limits}.								
\end{proof}

\section{Maps between classifying spaces}$\label{universal}$

In this section we use Morse theory for the Yang--Mills functional and gauge-theoretical constructions to study the connectivity of the natural map
\begin{equation}\label{B}B\co \Hom(\pi_1 S, BU(n)) \maps \Map_* (B\pi_1 S, BU(n))\end{equation}
for $S$ an aspherical surface.  
We show that the connectivity of this map tends to infinity with $n$ (Theorem~\ref{B-highly-ctd}), so long as one considers only null-homotopic maps in the orientable case.
Here $B\pi_1 S$ and $BU(n)$ refer to the simplicial classifying spaces of these groups, as in Segal~\cite{Segal-class-ss}.  

We begin by reviewing the construction of the simplicial classifying space and the mapping (\ref{B}).
For any topological group $G$, the space $BG$ is the geometric realization of the internal category in topological spaces with one object and with $G$ as morphisms, so that $BG$ is the geometric realization of a
 simplicial space $k\mapsto G^k$.
Similarly, $EG$ is defined as the geometric realization of the internal category in topological spaces with object space $G$ and morphism space $G\cross G$; here $(g,h)$ is the unique morphism from $h$ to $g$ and $(k,g)\circ (g,h) = (k, h)$.  Thus $EG$ is the realization of a simplicial space of the form $k\mapsto G^{k+1}$. 
The functor $(g,h)\mapsto g^{-1} h$ gives a map $\pi\co EG\to BG$, which is a model for the universal principal $G$--bundle whenever the identity in $G$ is a non-degenerate basepoint (see May~\cite[Theorem 8.2]{May-CSF}) and in particular if $G$ is discrete or a compact Lie group.  The unique object in the category underlying $BG$ and the object $e\in G$ in the category underlying $EG$ give basepoints $*\in BG$ and $*\in EG$, with $\pi(*) = *$.  For future reference, we note that $EG$ deformation retracts to $*$, because on the underlying category the projection functor $g \mapsto e$ (sending all morphisms to the identity) is naturally equivalent to the identity functor.
 
 Given two topological groups $\Gamma$ and $G$, the corresponding map
 $$B = B_\Gamma \co \Hom(\Gamma, G) \maps \bMap (B\Gamma, BG),$$
is defined by considering $\rho\co \Gamma \to G$ as a functor and taking the induced map on geometric realizations, which is always a based map.  
To see that the map $B$ is continuous, note that it is the adjoint of the map on quotient spaces induced by the map
\begin{equation}\label{simplices}
\Hom(\Gamma, U(n)) \cross \left( \coprod_k \Gamma^k \cross \Delta^k\right) \maps \left( \coprod_k U(n)^k \cross \Delta^k\right)
\end{equation}
that sends $(\rho, \gamma_1, \ldots, \gamma_k, t)$ to $(\rho(\gamma_1), \ldots \rho(\gamma_k), t)$.  Since $\Hom(\Gamma, U(n))$ is topologized as a subspace of the mapping space $\Map(\Gamma, U(n))$ with the compact-open topology, the map (\ref{simplices}) is continuous, and hence $B$ is as well.  Note that there is an analogous continuous mapping
\begin{equation}\label{E}
E\co \Hom(\Gamma, G) \maps \Map_* (E\Gamma, EG).
\end{equation}

\vspace{.15in} 
Our analysis of the map $B$ will rely on the study of \emph{flat connections}.  We briefly review the notion of a flat connection and its holonomy.  For further details on these concepts and other gauge-theoretical material below, we refer the reader to Ramras~\cite{Ramras-surface} or Wehrheim~\cite{Wehrheim}.  

In this section $X$ will denote a smooth, finite-dimensional closed manifold with a basepoint $x_0\in X$.  We set $\pi_1 X = \pi_1 (X, x_0)$, and similarly for other based spaces.  Consider a smooth (right) principal $G$--bundle $\pi\co P \to X$, where $G$ is a Lie group, and equip $P$ with a basepoint $p_0 \in P_{x_0} := \pi^{-1} (x_0)$.  One may define a connection $A$ on $P$ to be a $G$--equivariant splitting of the natural map
\begin{equation}\label{connection}\xymatrix{ T(P) \ar[r] & \pi^* T(X)},\end{equation}
where $T(-)$ denotes the tangent bundle.
Geometrically, this corresponds to a $G$--invariant choice of horizontal direction in the bundle $P$.  We denote the space of all connections by $\A (P)$; this is in fact an affine space modeled on the vector space of $\textrm{ad} (P)$--valued 1--forms.  In particular, $\fc(P)$ is contractible.
The trivial bundle $X\cross G$ then has the trivial horizontal connection, and a connection is \emph{flat} if it is locally isomorphic to the trivial connection.  We denote the space of flat connections by $\fc(P)\subset \A (P)$.  Given a path $\gamma: I\to X$ with $\gamma(0) = x_0$, existence and uniqueness of solutions to ODE's produces a lift $\wt{\gamma}$ of the entire path, starting at $\wt{\gamma} (0) = p_0$.  Loops may not lift to loops, and but $\wt{\gamma} (0)$ and $\wt{\gamma} (1)$ lie in the same fiber and hence there is a unique element $g\in G$ such that $\wt{\gamma} (1) \cdot g = \wt{\gamma} (0)$.  When $A$ is flat, local triviality implies that this function $\gamma \mapsto g$ is well-defined on based homotopy classes, and in fact defines a homomorphism $\pi_1 X \to G$, known as the \emph{holonomy} representation $\mH (A)$.  The based gauge group $\G_0 (P)$ consists of all principal bundle automorphisms restricting to the identity on $P_{x_0}$, and $\G_0$ acts freely on the space of connections.  This action preserves the subspace of flat connections, and does not change the holonomy representation: $\mcH (\phi_* A) = \mcH (A)$ for all $\phi\in \G_0 (P)$, $A\in \fc(P)$.

To be precise, we will work with the Sobolev space $\flatc^{k, p} (P)$.  This space is the completion of the space of \emph{smooth} connections in the Sobolev norm $L^p_k$ (for some $p$ and $k$).  The precise choice of norm will not be important, although we do require enough regularity that the holonomy map
$$\mH \co \flatc(P) \to \Hom (\pi_1 X, G)$$
is well-defined and continuous.  
Similarly, the gauge group $\G_0 = \G_0^{k+1, p}$ will be the completion of the group of smooth, based principal-bundle automorphisms of $P$ in the $L^p_{k+1}$--norm.  (Increasing the regularity from $k$ to $k+1$ ensures that $\G_0$ acts continuously on $\fc$.)

\vspace{.15in}

We now describe a second construction, which (loosely speaking) provides an inverse to the holonomy map.  (For further details, see~\cite[Appendix]{Ramras-surface}).  Given a based smooth manifold $(Y, y_0)$ with universal cover $(\wt{Y}, \wt{y_0}) \srt{\pi} (Y, y_0)$ and a representation $\rho \co \pi_1 Y\to U(n)$, we can form the mixed bundle $E_\rho = \left(\wt{Y} \cross U(n)\right)/\pi_1 Y$, where $\pi_1 Y$ acts via 
$(\wt{y}, U) \cdot \gamma = (\wt{y} \cdot \gamma, \rho(\gamma)^{-1} U)$.  Note here that the action of $\pi_1 (Y)$ on $\wt{Y}$ is by deck transformations, and \emph{depends} on our choice of basepoint $\wt{y_0} \in \wt{Y}_{y_0}$.  The map $[\wt{y}, U] \stackrel{\pi_\rho}{\goesto} \pi(\wt{y})$  makes $E_\rho$ a principal $U(n)$--bundle, which is trivial over each simply connected subspace of $Y$.  Moreover, $E_\rho$ carries a canonical flat connection $A_\rho$, which descends from the trivial horizontal connection on $\wt{Y} \cross U(n)$ considered as a 
$\left(\pi_1 Y\cross U(n)\right)$--bundle.  Considered as a splitting of the map
$$T(E_\rho) \maps \pi_\rho^* (TY),$$
$A_\rho$ is simply the map
$$([\wt{y}, W], \vect{v}) \mapsto Dq \left(\left( (D_{\wt{y}} \pi)^{-1} (\vect{v}), \vect{0}_{W}\right)\right)$$
where $q \co \wt{Y} \cross U(n) \to E_\rho$ is the quotient map and $\vect{0}_{W}\in T_W U(n)$ is the zero vector.  (Note that the definition of $A_\rho$ does not depend on the specific representative $(\wt{y}, W)$ for $[\wt{y}, W]\in E_\rho$.)
The important property of this connection is that its holonomy representation, computed at the basepoint $[\wt{y_0}, I] \in E_\rho$, is precisely $\rho$.

We will need a naturality property for these connections.  
Given a based map $f \co (Y, y) \to (Z, z)$ between smooth manifolds, a representation $\rho \co \pi_1 Z \to U(n)$ induces a representation $f^*\rho = \rho \circ f_* \co \pi_1 Y \to U(n)$.  Moreover, if the universal covers $\wt{Y}$ and $\wt{Z}$ are equipped with basepoints $\wt{y}$ and $\wt{z}$ lying over $y$ and $z$, then there is unique based map $\wt{Y} \to \wt{Z}$ covering $f$, which induces a map of $U(n)$--bundles $\wt{f} \co E_{f^*\rho} \to E_\rho$.  Moreover, the connections $A_{f^*(\rho)}$ and $f^*(A_\rho)$ correspond under this isomorphism, in the sense that we have a commutative diagram
\begin{equation}\label{nat-conn}
\xymatrix{ T(E_{f^*\rho}) \ar[r]^{D\wt{f}} & T(E_\rho) \\
		\pi_{f^*\rho}^*TY \ar[r]^{(\wt{f}, Df)} \ar[u]^{A_{f^*\rho}} & \pi_\rho^*TZ \ar[u]^{A_\rho}.
	}
\end{equation} 
(In fact, there is always a unique connection $f^*(A_\rho)$ making such a diagram commute, and we write $A_{f^*\rho} = f^*(A_\rho)$.)

In order to study the connectivity of the map $B$ for surfaces, we will construct, for any $X$, a commutative diagram of the form
\begin{equation}\label{T-diag}
\xymatrix{\flatc \ar@{-->}[rr]^(.45)\T \ar[d]^\mH & & \Map_* (X, EU(n)) \ar[d]^{\pi_*} \\
	 	\Hom^0 (\pi_1 X, U(n)) \ar[r]^(.45){B} &  \Map_*^0 (B\pi_1 S, BU(n)) \ar[r]^(.52){f^*}_{\heq} 
					& \Map_*^0 (X, BU(n)).
	     }
\end{equation}
We now explain the various spaces and maps in this diagram.
First, $\flatc = \flatc^{k, p} (X\cross U(n))$ is the space of flat connections on the trivial $U(n)$--bundle over $X$ and $\Map_*^0$ denotes the connected component of the constant map in the based mapping space $\Map_*$.  The subspace $\Hom^0 \subset \Hom$ consists of those representations inducing trivial bundles, i.e. $\Hom^0$ is the inverse image, under $B$, of $\Map_*^0$.  Note that when $X=S$ is an aspherical surface, $\flatc$ is connected (Proposition~\ref{fc-conn}), so the image of $\mH$ always lies in the connected component 
$$\Hom_I (\pi_1 S, U(n))\subset \Hom^0(\pi_1 S, U(n))$$
containing the trivial representation (in fact, $\Hom_I = \Hom^0$ in this case).  Hence for surfaces we may replace $\Hom^0$ with $\Hom_I$ in Diagram (\ref{T-diag}).
The homotopy equivalence $f^*$ is induced by a map $f$ classifying the universal cover $p\co \wt{X} \to X$ as a principal $(\pi_1 X)$--bundle; in other words $f$ is chosen so as to fit into a commutative diagram
\begin{equation}\label{f}
\xymatrix{ \wt{X} \ar[d]^p \ar[r]^(.41){\wt{f}} & E\pi_1 X \ar[d] \\
		X \ar[r]^(.41){f} & B\pi_1 X.
		}
\end{equation}
We will choose $f$ such that $f(x_0) = *\in B\pi_1 X$.  We may now equip $\wt{X}$ with the basepoint $\wt{x_0} = \wt{f}^{-1} (*) \in p^{-1} (x_0)$.  
The mapping $\T$ will be defined in the proof of Theorem~\ref{B-highly-ctd}, and its definition depends on our choice of the map $f$.

\begin{lemma}$\label{fibs}$
The vertical maps in Diagram (\ref{T-diag}) are fibrations.
\end{lemma}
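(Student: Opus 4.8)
The plan is to handle the diagram's two vertical maps, namely $\mH$ on the left and $\pi_*$ on the right, by entirely separate arguments; only the left one requires any real input.

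For the right-hand map $\pi_*\co \Map_*(X, EU(n)) \maps \Map_*(X, BU(n))$, the point is that $\pi\co EU(n)\maps BU(n)$ is itself a fibration. As recalled above, the identity of $U(n)$ is a non-degenerate basepoint, so $\pi$ is a model for the universal principal $U(n)$--bundle (May~\cite[Theorem 8.2]{May-CSF}); since $BU(n)$ is paracompact this bundle is numerable, hence $\pi$ is a Hurewicz fibration. Because $X$ is a compact manifold, the inclusion $\{x_0\}\injects X$ is a cofibration, so $X$ is well-pointed, and therefore $X\wedge(-)$ preserves cofibrations and its right adjoint $\Map_*(X,-)$ carries fibrations to fibrations. (Alternatively, one builds explicit local trivializations of $\pi_*$ over the mapping space by using compactness of $X$ to pass from a finite trivializing cover for $\pi$ to a neighborhood of a given map; since $\Map_*(X, BU(n))$ is metrizable, a locally trivial map over it is a Hurewicz fibration.)

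For the left-hand map $\mH\co \flatc \maps \Hom^0(\pi_1 X, U(n))$, I would use the analytic theory of holonomy. Recall that the based gauge group $\G_0 = \G_0^{k+1,p}(X\times U(n))$ acts freely and continuously on $\flatc = \flatc^{k,p}(X\times U(n))$ and that $\mH(\phi_* A) = \mH(A)$ for all $\phi\in\G_0$, so $\mH$ factors as $\flatc \maps \flatc/\G_0 \stackrel{\overline{\mH}}{\maps} \Hom^0$. The goal is to show (i) that $\overline{\mH}$ is a homeomorphism and (ii) that the orbit map $\flatc \maps \flatc/\G_0$ is a locally trivial principal $\G_0$--bundle. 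For (i), surjectivity holds because, for $\rho\in\Hom^0$, the associated bundle $E_\rho$ is by definition based-isomorphic to $X\times U(n)$, and transporting the canonical flat connection $A_\rho$ across such an isomorphism yields a flat connection on the trivial bundle with holonomy exactly $\rho$; injectivity is the standard fact that two flat connections on a fixed bundle with the same holonomy differ by a unique based gauge transformation. The remaining assertions — that $\overline{\mH}$ is \emph{open} and that $\flatc\maps\flatc/\G_0$ admits continuous local sections — are the genuine analytic inputs; they follow from the gauge-fixing (local slice / Coulomb gauge) results for Sobolev spaces of flat connections in Ramras~\cite[Appendix]{Ramras-surface}, cf.\ Wehrheim~\cite{Wehrheim}. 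Granting these, $\mH$ is a locally trivial bundle over the metrizable, hence paracompact, base $\Hom^0$, and is therefore a Hurewicz fibration.

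The main obstacle is step (ii) for $\mH$: upgrading holonomy from a continuous bijection-modulo-gauge to a genuinely locally trivial $\G_0$--bundle. This is precisely where one must invoke the analytic machinery for Sobolev spaces of connections (existence of local slices), which is why the statement leans on the cited gauge-theoretic work; everything else, including the treatment of $\pi_*$, is formal.
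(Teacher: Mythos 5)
Your proposal is correct and follows essentially the same route as the paper: the holonomy map is handled by factoring through $\flatc/\G_0$, invoking the principal $\G_0$--bundle structure of the quotient (the local-slice/gauge-fixing input, which the paper cites to Fine--Kirk--Klassen and Mitter--Viallet) together with the homeomorphism $\flatc/\G_0 \isom \Hom^0(\pi_1 X, U(n))$ from \cite{Ramras-surface}. For $\pi_*$ the paper settles for a direct Serre-fibration lifting argument (lifting against CW pairs via the adjoint square), whereas you invoke the Hurewicz fibration property of $EU(n)\to BU(n)$ and preservation of fibrations by $\Map_*(X,-)$ for well-pointed $X$; this is a stronger conclusion obtained by a standard pullback-hom argument, and either version suffices for the paper's later use of the long exact sequences.
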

\begin{proof}  First consider the holonomy map $\mH$.
Recall that the based gauge group
$\G_0 = \G_0^{k+1, p} (X\cross U(n))$ acts continuously on $\fc$, and the quotient map is a principal $\G_0$--bundle (see Fine--Kirk--Klassen~\cite{FKK} or Mitter--Viallet~\cite{Mitter-Viallet}).  Moreover, the holonomy map $\mH$ factors through this quotient map and induces a homeomorphism
$$\Hom^0 (\pi_1 S, U(n)) \srm{\isom} \flatc/\G_0$$
(see Ramras~\cite[Section 3]{Ramras-surface}).
Hence the holonomy map itself is a principal $\G_0$--bundle, and in particular a fibration.

Next, say $(Z, z_0)$ is a based CW-complex and $(E, *) \srt{p}(B, *)$ is a (based) Serre fibration.  The induced map
$$p_*\co \Map_* (Z, E) \maps \Map_* (Z, B)$$
is a Serre fibration, because if $K$ is a CW complex, then any diagram
\begin{equation}
\xymatrix{ K \cross \{0\} \ar[r]^(.42)\alpha \ar[d] & \bMap (Z, E) \ar[d]^{p_*}\\
		   K\cross I \ar[r]^(.42)H \ar@{-->}[ur]^(.45){\wt{H}} & \bMap (Z, B)
		    }
\end{equation}  
can be filled in using the adjoint of a lifting in the induced diagram
\begin{equation}
\xymatrix{ K\cross I \cross \{z_0\}\cup K \cross \{0\} \cross Z \ar[r]^(.75){* \cup \alpha} \ar[d] & E \ar[d]^{p}\\
		   K\cross I \cross Z \ar[r] \ar@{-->}[ur] & B;
		    }
\end{equation}  
a lifting exists because the vertical map on the left is a homotopy equivalence.  These observations now apply to the map $\pi_*$ in Diagram (\ref{T-diag}), where $E = EU(n)$.  As noted at the start of this section, there is a canonical deformation retraction from $EU(n)$ to its basepoint, so the image of $\pi_*$ is precisely $\Map^0_*(Z, B)$.
\end{proof}

\begin{remark}   Atiyah and Bott~\cite[Section 2]{A-B} showed that the universal principal bundle for the group $\Map(S, U(n))$ is given by the map
$$\Map(S, EU(n)) \maps \Map^0 (S, BU(n)),$$
where the right-hand side denotes the space of nullhomotopic maps.  A similar argument could be used with based mapping spaces in place of unbased mapping spaces to obtain a model for the universal principal bundle with structure group $\Map_* (S, U(n))$.  For our purposes, however, the previous  observations will suffice.
\end{remark}

We will need a connectivity result for the space of flat connections over a surface.  

\begin{proposition}$\label{fc-conn}$
The space $\flatc(M^g \cross U(n))$ is precisely $2g(n-1)$--connected.  If $\Sigma$ is an aspherical, non-orientable surface with orientation double-cover $M^{\wt{g}}$, then $\flatc(\Sigma \cross U(n))$ is at least $(\wt{g}(n-1) -1)$--connected.
\end{proposition}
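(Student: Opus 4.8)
The plan is to realise $\flatc = \flatc(M^g\times U(n))$ as the minimum stratum of the Yang--Mills functional $L(A)=\|F_A\|^2$ on the contractible affine space $\A = \A(M^g\times U(n))$ of connections on the trivial bundle, and to bound the codimensions of the remaining strata via the Atiyah--Bott stratification. Since the trivial bundle has degree $0$, its flat connections are exactly the absolute minimisers of $L$, so $\flatc$ is the semistable (equivalently, minimal) Yang--Mills stratum. By the Morse theory for $L$ over a closed Riemann surface due to Atiyah--Bott~\cite{A-B}, made analytically rigorous by Daskalopoulos~\cite{Dask} and R{\aa}de~\cite{Rade}, the negative gradient flow of $L$ exhibits $\A$ as filtered by $\flatc$ together with locally closed submanifolds $\A_\mu$ of finite real codimension, indexed by the nonzero Harder--Narasimhan types $\mu = ((n_1,d_1),\dots,(n_k,d_k))$ with $\sum_i n_i = n$, $\sum_i d_i = 0$ and $d_1/n_1 > \cdots > d_k/n_k$. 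The key consequence is that $\flatc \injects \A$ is $c$--connected, where $c+1$ is the smallest real codimension of a stratum $\A_\mu$ with $\mu\neq 0$: deleting from a manifold a locally finite union of submanifolds of real codimension $\geq c+1$ changes no $\pi_i$ with $i\leq c-1$ and is onto on $\pi_c$. This is essentially the argument of~\cite{Ramras-surface}, where Proposition~\ref{fc-conn} originates.

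The next step is the codimension count. By Atiyah--Bott, the complex codimension of $\A_\mu$ in $\A$ is $d_\mu = \sum_{i<j}(n_j d_i - n_i d_j + n_i n_j(g-1))$. For a type with $k=2$ this is $d_\mu = n\,d_1 + n_1 n_2(g-1)$ with $d_1\geq 1$; since $n_1 n_2$ (with $n_1+n_2=n$) is minimised at $\{n_1,n_2\}=\{1,n-1\}$, where it equals $n-1$, the minimum over nonzero $k=2$ types is $g(n-1)+1$, and types of length $k\geq 3$ only increase $d_\mu$. Hence the minimal real codimension of a nonzero stratum is $2g(n-1)+2$, so $\flatc \injects \A$ is $(2g(n-1)+1)$--connected (assuming $n\geq 2$; the case $n=1$ is trivial, as then $\flatc=\A$). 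Since $\A$ is contractible, the long exact sequence of $(\A,\flatc)$ gives $\pi_i(\flatc) \cong \pi_{i+1}(\A,\flatc) = 0$ for $i \leq 2g(n-1)$, so $\flatc$ is $2g(n-1)$--connected. For sharpness I would note that in degree $2g(n-1)+1$ only the strata of minimal codimension $2g(n-1)+2$ contribute (all other nonzero strata have strictly larger codimension), so the Thom isomorphism for the normal bundle of such a stratum gives $H_{2g(n-1)+1}(\flatc)\neq 0$, whence by Hurewicz $\pi_{2g(n-1)+1}(\flatc)\neq 0$; alternatively one can quote the low-degree computation of $\pi_*\Mf(M^g)$ in~\cite{Ramras-surface}, which locates the first nonvanishing homotopy group in exactly this dimension.

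For an aspherical non-orientable surface $\Sigma$ with orientation double cover $p\co M^{\wt{g}}\to\Sigma$, I would transport the problem to $M^{\wt{g}}$. The deck involution $\tau$ acts on $\A(M^{\wt{g}}\times U(n))$, reversing the complex structure on the surface, and on $\flatc(M^{\wt{g}}\times U(n))$; pullback along $p$ identifies $\A(\Sigma\times U(n))$ with the contractible affine space $\A(M^{\wt{g}}\times U(n))^\tau$ and $\flatc(\Sigma\times U(n))$ with $\flatc(M^{\wt{g}}\times U(n))^\tau$. Since $\tau$ preserves $L$ and its gradient flow, one obtains a Morse stratification of $\A(\Sigma\times U(n))$ whose nonminimal strata are the fixed-point sets $\A_\mu^\tau$; because $\tau$ acts $\C$--antilinearly on the complex normal space to $\A_\mu$, the fixed subspace is a real form, so the real codimension of $\A_\mu^\tau$ is \emph{at least} $d_\mu|_{g=\wt{g}}$, hence at least $\wt{g}(n-1)+1$. (The inequality, rather than an equality, is why the conclusion is only ``at least''.) The relevant extension of Yang--Mills Morse theory to non-orientable surfaces is developed by Ho--Liu~\cite{Ho-Liu-non-orient, Ho-Liu-anti-perfect}. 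The same long-exact-sequence argument then shows $\flatc(\Sigma\times U(n))$ is at least $(\wt{g}(n-1)-1)$--connected.

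The main obstacle will be the first step: extracting the connectivity of the inclusion $\flatc\injects\A$ from the Yang--Mills gradient flow. The literature usually packages this Morse theory as a statement about ($\G_0$--equivariant) cohomology, so one must check that the flow together with the smooth normal structure of the strata yields the homotopy-theoretic conclusion, and this is where the Sobolev analysis genuinely enters (convergence of the flow, local structure of $\A$ near each stratum). The non-orientable codimension estimate and the sharpness clause in the orientable case are the remaining delicate points — the former because the fixed-point stratification is less transparent (one only gets a lower bound), the latter because it requires an actual nonvanishing input beyond a dimension count.
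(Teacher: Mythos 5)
Your proposal is correct and follows essentially the same route as the paper, which establishes Proposition~\ref{fc-conn} by citing \cite[Proposition 4.9]{Ramras-surface}: the Atiyah--Bott--Daskalopoulos--R{\aa}de stratification of the contractible space $\A$ (with the open semistable stratum retracting onto $\flatc$ under the Yang--Mills flow), the codimension count $d_\mu \geq g(n-1)+1$ over all nonzero Harder--Narasimhan types, Smale's infinite-dimensional transversality to remove the higher strata, and the fixed locus of the antiholomorphic deck involution on the double cover in the non-orientable case. The only place you are thinner than the cited source is the sharpness clause: the paper defers this to \cite[Theorem 4.9]{Ramras-YM}, and note that the low-degree computation of $\pi_*\Mf(M^g)$ concerns the stabilized moduli space rather than $\flatc(M^g\cross U(n))$ for a fixed $n$, so your Gysin/Thom nonvanishing argument (or the citation to \cite{Ramras-YM}) is the right way to finish that part.
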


This result is an application of Morse theory for the Yang--Mills functional~\cite{A-B, Dask, Ho-Liu-non-orient, Rade}, together with Smale's infinite-dimensional transversality theorem (see Abraham~\cite{Abraham-Smale}).
For orientable surfaces of genus greater than one, the well-known connectivity estimate of $2(g-1)(n-2) -2$ would suffice; this estimate goes back at least to Daskalopoulos~\cite{Dask}.  The improved bound of $2g(n-1)$ was proven in Ramras~\cite[Proposition 4.9]{Ramras-surface}, along with the estimate for non-orientable surfaces.  In the orientable case this improved bound is in fact sharp, while in the non-orientable case the connectivity is precisely $2n\wt{g} -3 \wt{g} - 1$ (so long as $\wt{g}>1$ and $n\geqs 9$).  See Ramras~\cite[Theorems 4.9 and 4.11]{Ramras-YM}.  
\vspace{.15in}

We can now prove our connectivity result for the map $B$.  The first proof given below is the most direct approach to studying this map, and involves a new gauge-theoretical construction.  After the proof we will sketch a second approach, relying on ideas found in Donaldson-Kronheimer~\cite[Section 5.1]{DK}.  The two approaches are closely related, as we will see. 

Recall the notion of an $(l, k)$--connected map from Definition~\ref{k,l-ctd}. 

\begin{theorem}$\label{B-highly-ctd}$ Let $S$ be an aspherical surface.  Then the natural map
$$B\co \Hom(\pi_1 S, BU(n)) \maps \Map_* (B\pi_1 S, BU(n))$$
is at least $(\wt{g}(n-1))$--connected if $S$ is non-orientable (with double cover $M^{\wt{g}}$), and precisely $(1, 2g(n-1) + 1)$--connected if $S = M^g$ is orientable.
\end{theorem}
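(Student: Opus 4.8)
The plan is to analyze the commutative diagram~(\ref{T-diag}) by comparing it to the fibration structure established in Lemma~\ref{fibs}. The vertical maps $\mathcal{H}$ and $\pi_*$ are fibrations, and the bottom row is the composite $f^* \circ B$, with $f^*$ a homotopy equivalence; so it suffices to understand how $\mathcal{T}$ relates the fibers and total spaces. First I would \emph{construct the map} $\mathcal{T}\co \flatc \to \Map_*(X, EU(n))$: given a flat connection $A$ on $X\times U(n)$ with holonomy $\rho = \mathcal{H}(A)$, parallel transport along paths from $\widetilde{x_0}$ produces (after trivializing via the flat structure pulled back to $\widetilde{X}$) an equivariant map $\widetilde{X}\to EU(n)$ covering $B\rho$, which descends to a based map $X\to EU(n)$; the key naturality property is diagram~(\ref{nat-conn}) together with the fact that $A_\rho$ has holonomy exactly $\rho$. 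I would verify that $\pi_* \circ \mathcal{T} = f^* \circ B \circ \mathcal{H}$ on the nose, and that $\mathcal{T}$ is continuous (using the Sobolev regularity assumed for $\mathcal{H}$).

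The heart of the argument is then a fiber comparison. Since $\mathcal{H}$ is a principal $\mathcal{G}_0$--bundle (Lemma~\ref{fibs}), its fibers are copies of $\mathcal{G}_0$, which is homotopy equivalent to $\Map_*(X, U(n))$; and the fiber of $\pi_*\co \Map_*(X, EU(n))\to \Map_*^0(X, BU(n))$ over a point is $\Map_*(X, U(n))$ as well, since $U(n)$ is the fiber of $EU(n)\to BU(n)$. The plan is to show that $\mathcal{T}$ restricts to a \emph{weak equivalence on fibers}: a based gauge transformation $\phi\in\mathcal{G}_0$ acts on a fixed flat connection $A$ and, under $\mathcal{T}$, this should correspond to multiplication by the associated map $X\to U(n)$ in the fiber $\Map_*(X, U(n))$ of $\pi_*$. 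Granting this, the long exact sequences of the two fibrations, compared via $\mathcal{T}$ and $\mathcal{H}$ and the five lemma, reduce the connectivity of $B$ (equivalently of $f^*\circ B$) to the connectivity of $\mathcal{H}$ as a map from $\flatc$ to $\Hom^0(\pi_1 S, U(n))$ — but $\mathcal{H}$ is surjective with total space of known connectivity. Precisely: $\flatc$ is $2g(n-1)$--connected in the orientable case and $(\widetilde g(n-1)-1)$--connected in the non-orientable case by Proposition~\ref{fc-conn}, while $\mathcal{G}_0\simeq\Map_*(S, U(n))$ and $\Map_*(S,EU(n))$ have computable low-dimensional homotopy (the latter being, up to the fiber contribution, governed by $\Map_*^0(S, BU(n))$). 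Chasing the two long exact sequences then yields exactly the claimed $(1, 2g(n-1)+1)$--connectivity for orientable $S$ (the index $1$ reflecting that $\pi_0$ of $\Hom_I$ is a point but $\pi_1$ may differ, since $M^g$ is not simply connected) and $\widetilde g(n-1)$--connectivity in the non-orientable case.

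I expect the \textbf{main obstacle} to be proving that $\mathcal{T}$ is a fiberwise weak equivalence — more precisely, pinning down the precise equivariance of $\mathcal{T}$ with respect to the $\mathcal{G}_0$--action on $\flatc$ versus the $\Map_*(X, U(n))$--action on $\Map_*(X, EU(n))$, and checking that the resulting map of fibers $\mathcal{G}_0 \to \Map_*(X, U(n))$ is (homotopic to) the standard equivalence rather than some twisted version. This is where the careful bookkeeping of basepoints $\widetilde{x_0} = \widetilde f^{-1}(*)$, the choice of $f$ in~(\ref{f}), and the naturality square~(\ref{nat-conn}) all have to be reconciled simultaneously; a sign or conjugation error here would propagate into a wrong connectivity bound. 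A secondary technical point is the precise accounting of $\pi_0$ and $\pi_1$: one must use that $\flatc$ is connected for surfaces (Proposition~\ref{fc-conn}) so that $\mathrm{im}(\mathcal{H}) = \Hom_I = \Hom^0$, and handle the fact that the relevant mapping spaces are disconnected, which is exactly what the $(l,k)$--connected formalism of Definition~\ref{k,l-ctd} is designed to absorb. The alternative approach sketched via Donaldson--Kronheimer~\cite[Section 5.1]{DK} would replace the explicit $\mathcal{T}$ with a more formal argument, but the same fiberwise comparison is the crux either way.
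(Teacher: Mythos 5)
Your proposal follows essentially the same route as the paper's proof: construct $\T$ from the mixed bundle $E_\rho$ and parallel transport, check that it fits into the commutative diagram of fibrations, identify the induced map of fibers with the standard weak equivalence $\G_0 \heq \Map_*(S, U(n))$, and apply the five lemma together with Proposition~\ref{fc-conn}. The only points to sharpen are that the precise connectivity of $\T$ comes from the contractibility of $\Map_*(S, EU(n))$ (since $EU(n)$ deformation retracts to its basepoint), and that the lower index $1$ in the orientable case records the failure of surjectivity on $\pi_0$ (the representation space is connected while $\pi_0 \Map_*(M^g, BU(n)) \isom \bbZ$), not a discrepancy on $\pi_1$.
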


\begin{remark} The formulas from Ramras~\cite{Ramras-YM} (quoted above) give the precise connectivity of the map $B$ for most non-orientable surfaces.
\end{remark}

\nd {\bf Proof of Theorem~\ref{B-highly-ctd}.} The desired statements regarding the map
$$\pi_0 \Hom(\pi_1 S, BU(n)) \maps \pi_0 \Map_* (B\pi_1 S, BU(n))$$
are easily proven using the arguments in Ramras~\cite[Section 4]{Ramras-surface}, so we will restrict our attention to the identity component $\Hom_I(S, U(n))$ of the representation space and the component $\Map_*^0 (B\pi_1 S, BU(n))$  of nullhomotopic maps.  The restriction of $B$ to these subspaces will be denoted by $B_I$.  The remainder of the argument is the same in the orientable and the non-orientable cases.  

We will construct a continuous mapping $\T\co \flatc \maps \Map_* (S, EU(n))$
making the diagram (\ref{T-diag}) commute, so that by Lemma~\ref{fibs} we have a commutative diagram of fibrations
\begin{equation}\label{T-diag2}
\xymatrix{ \G_0^{k+1} \ar[r]^(.4)i_(.4){\heq} \ar[d] & \Map_* (S, U(n)) \ar[d]\\
		\flatc \ar@{-->}[r]^(.4)\T \ar[d]^\mH & \Map_* (S, EU(n)) \ar[d]^{\pi_*} \\
	 	\Hom_I (\pi_1 S, U(n)) \ar[r]^{f^*\circ B_I}  & \Map_*^0 (S, BU(n)).
	     }
\end{equation}

Since $\Map_* (S, EU(n))$ is contractible, the map $\T$ has connectivity one more than the space $\fc$.   We will identify the map between the fibers with the natural (continuous) inclusion $i\co\G_0^{k+1} \to \Map_* (S, U(n))$ guaranteed by the Sobolev embedding theorem, which is a weak equivalence by general approximation results for function spaces.  
Now the 5-lemma implies that $f^*\circ B_I$ and $\T$ have the same connectivity.   But $f^*$ is a homotopy equivalence, so $B_I$
 and $f^*\circ B_I$ have the same connectivity as well.  Thus the theorem will follow from Diagram (\ref{T-diag2}) and Proposition~\ref{fc-conn}.

We now construct the mapping $\T$.
The construction itself (unlike the connectivity calculation) does not require $S$ to be a surface; any compact manifold would suffice.  Recall that we denote the basepoint in $S$ by $x_0$, so $\pi_1 S = \pi_1 (S, x_0)$.  Given a connection $A\in \flatc(S\cross U(n))$, let $\rho = \mH(A)$ be the holonomy representation associated to $A$.  We can now construct the mixed bundle
$E_\rho$ described above, together with its flat connection $A_\rho$.  Since $\mH(A_\rho) = \rho$, there is a (unique) bundle isomorphism
\begin{equation}\label{phi_A} \phi_A\co S\cross U(n) \isom E_\rho \end{equation}
which sends $(x_0, I)$ to $[\wt{x_0}, I]$ and satisfies $\phi_A^* (A_\rho) = A$.  (We recall the construction of $\phi_A$ below; see Ramras~\cite[Appendix]{Ramras-surface} for full details.)  Note that the mixing construction can also be applied to the map $E\pi_1 S\to B\pi_1 S$, resulting in the bundle
$$E'_\rho = \left(E\pi_1 S\cross U(n)\right)/\pi_1 S \maps B\pi_1 S,$$
and we have a pullback diagram of principal $U(n)$--bundles
\begin{equation}\label{E_rho}
\xymatrix{ E_\rho \ar[r]^{\wt{f}_\rho} \ar[d] & E'_\rho \ar[d] \ar[r]^(.44){u_\rho} & EU(n) \ar[d] \\
		S \ar[r]^(.42) f & B\pi_1 S \ar[r]^(.46){B\rho} & BU(n),
	}
\end{equation}
where the map $\wt{f}_\rho\co E_\rho \to E'_\rho$ is given by $[\wt{x}, W] \mapsto [\wt{f} (\wt{x}), W]$ and the map
$u_\rho\co E'_\rho \to EU(n)$ is given by $[e, W]\mapsto E\rho (e) \cdot W$ (here $E\rho$ is the map in (\ref{E})).  Note that both of these maps are $U(n)$--equivariant.

We now have a commutative diagram
\begin{equation}\label{section}
\xymatrix{
S \cross U(n) \ar[r]^(.58){\phi_A} \ar[dr] & E_{\rho} \ar[d] \ar[r]^(.44){u_\rho \circ \wt{f}_\rho} & EU(n)\ar[d]\\
& S \ar@/^/[ul]^s 
\ar[r]^(.39){B\rho \circ f} & BU(n),
}
\end{equation}
where  $s\co  S \to S\cross U(n)$ is the canonical section $s(m) = (m, I)$.  We define $\T(A)$ as the composite $S\to EU(n)$ in this diagram:
$$\T(A) = u_\rho \circ \wt{f}_\rho \circ \phi_A \circ s.$$
Note that this is a based mapping from $S$ to $EU(n)$, since each of the component maps is based.

We must check that this defines a continuous map $\fc \to \Map_*(S, EU(n))$, and that Diagram (\ref{T-diag2}) commutes.  An examination of the definitions shows that the lower square in (\ref{T-diag2}) commutes.  Next, recall that by the Sobolev embedding theorem, each gauge transformation $\phi\in \G_0$ is a continuous map; we want to show that the map on fibers induced by $\T$ can be identified with this embedding of $\G_0$ into the group $\G_0^\textrm{cont}$ of continuous (based) automorphisms.  Let $A_0\in \fc$ denote the trivial horizontal connection on $S\cross U(n)$.  Then the map
$\G_0 \to \fc$, given by $\phi \mapsto \phi^{-1}_* A_0$, is a homeomorphism of $\G_0$ onto the fiber of $\mH$ over the trivial representation.  On the other hand, for each $\phi\in \G_0^\textrm{cont}$ we may write $\phi(x, I) = (x, \phi_2 (x))$ for some $\phi_2 \in \Map_* (S, U(n))$.  Now $\phi \goesto (x\goesto *\cdot \phi_2(x))$ defines a homeomorphism of 
$\G_0^\textrm{cont}$ onto the fiber of $\Map_*(S,EU(n)) \to \Map_* (S, BU(n))$ (over the constant map).
We just need to check that for any $\phi\in \G_0$, the map $\T(\phi^{-1}_* A_0)$ is simply $x\goesto *\cdot \phi_2 (x)$.  To understand the effect of $\T$ on the connection $\phi^{-1}_* A_0$, note that 
$\mH (A_0) = I$ (the trivial representation), and the bundle $E_I$ is canonically isomorphic to the trivial bundle.  Under this isomorphism, the connections $A_I$ and $A_0$ agree, and $\phi$ itself is the unique based isomorphism $S\cross U(n) \maps E_I$ taking $\phi^{-1}_* A_0$ to $A_I = A_0$.  
Since the mapping $EI\co E\pi_1 S \to EU(n)$ is the constant map to $*\in EU(n)$, 
the map $u_I \circ \wt{f}_I\co E_I = S\cross U(n) \to EU(n)$ in Diagram (\ref{section}) just sends $(x, W)$ to $*\cdot W \in EU(n)$.  One now sees that 
$\T(\phi^{-1}_* A_0)$ is precisely the map $x\goesto *\cdot \phi_2 (x)$.

To complete the proof, we must show that $\T$ is in fact continuous.
By a basic fact about the compact-open topology (Munkres~\cite[Theorem 46.11]{Munkres}), it suffices to check that the adjoint $\T^\vee\co \flatc \cross S \maps EU(n)$ is continuous.  Tracing the definitions, we see that the map $\T^\vee$ factors as
$$\fc \cross S \srm{F} \left(\Hom(\pi_1 S, U(n)) \cross \wt{S} \cross U(n)\right)/\pi_1 S \srm{\nu} EU(n),$$
where $F(A, x) = \phi_A (x, I)$ and $\nu ([\rho, \wt{x}, W]) = E\rho (\wt{f}\wt{x})\cdot W$. The map $\rho\mapsto E\rho$ is continuous (see (\ref{E})), and since $E\pi_1 S$ is locally compact and Hausdorff, the evaluation
map $\Map_*(E\pi_1 S, EU(n)) \cross E\pi_1 S \to EU(n)$ is continuous as well.  This establishes continuity of $\nu$.

To understand the mapping $F$, we need to recall the construction (see Ramras~\cite[Lemma 8.5]{Ramras-surface} for further details) of the map $\phi_A$ appearing in (\ref{phi_A}).
Given any principal $U(n)$--bundle $P\srt{\pi} S$ and a simply connected neighborhood $V$ of $x_0\in S$, we have a continuous parallel transport map
$$T \co \fc(P) \cross P|_V \maps P_{x_0}.$$
To define this map, choose a smooth path $\eta$ in $V$ from $v\in V$ to $x_0$.  Then for any $A\in \fc(P)$ and any $p\in P_x$, we define $T(A, p) = T_A (p)\in P_{x_0}$ by transporting $p$ back to the fiber over $x_0$ using the (unique) $A$--horizontal lift (starting at $p$) of $\eta$.  Flatness of $A$ and simple connectivity of $V$ imply that this map is well-defined.  
Continuity of $T \co \fc(P) \cross P|_V \to P_{x_0}$ follows from standard results on ordinary differential equations (Lang~\cite[Chapter IV]{Lang-dg}); note that as $v$ varies over a coordinate neighborhood inside $V$, we may vary the paths $\eta$ smoothly.
Similarly, parallel transport from the fiber over $x_0$ to $P|_V$ defines a continuous map 
$$\bar{T} \co \fc(P)\cross P_{x_0} \cross V \maps P|_V$$ 
defined by setting $\bar{T} (A, X, v) = \bar{T}_A (p, v)$ to be the endpoint of the $A$--horizontal lift (starting at $p$) of $\eta$.

The map $\phi_A$, restricted to $V\cross U(n)$, is given by
$$\phi_A (v, W) =  \bar{T}_{A_\rho}([x_0, \pi_2 T_A (v, W)], v),$$
where $\rho = \mH (A)$ and $\pi_2 \co S\cross U(n)\to U(n)$ is the projection.
Since $A$ and $A'$ have the same holonomy, $\phi_A$ is independent of the choices of paths, and is therefore well-defined on all of $S\cross U(n)$.

To prove continuity of $F$, we may restrict our attention to a simply connected neighborhood $V$ of $x_0$ (note that any $x\in S$ is contained in some such neighborhood). 
The inclusion $V\stackrel{i}{\injects} S$ induces a map of bundles
\begin{equation}\label{J}
\xymatrix{ \Hom(\pi_1 S, U(n)) \cross V\cross U(n) \ar[r]^(.44){J} \ar[d] 
	& \left( \Hom(\pi_1 S, U(n))\cross \wt{S} \cross U(n)\right)/\pi_1 S \ar[d] \\
		 \Hom(\pi_1 S, U(n)) \cross V \ar[r]^{\textrm{Id}\cross i} & \Hom(\pi_1 S, U(n))\cross S
		 }
\end{equation}
which restricts to the natural map $E_{i^*\rho} \to E_\rho$ at every representation $\rho \co \pi_1 S\to U(n)$.  Note that since $V$ is simply connected, $E_{i^*\rho}$ is the trivial bundle $V\cross U(n)$.  It now suffices to show that $J^{-1} \circ F_V$ is continuous, where $F_V$ is the restriction of $F$ to $\fc \cross V$.

We claim that $J^{-1} \circ F_V$ is in fact just the mapping
\begin{equation}\label{F_V} (A, v) \goesto (v, \pi_2 T_A (v, I)) \in V\cross U(n).\end{equation}
By the naturality property explained in Diagram (\ref{nat-conn}), the canonical connection $A_\rho$ on the bundle $E_\rho$ pulls back under $i$ to the \emph{trivial horizontal connection} on $V\cross U(n) = E_{i^*\rho}$.  Hence the bundle map (\ref{J}) simultaneously trivializes all of the connections $E_\rho$.
Formula (\ref{F_V}) now follows from the fact that pulling back connections commutes with parallel transport, and the fact that parallel transport along the trivial connection on $V\cross U(n)$ preserves the $U(n)$--coordinate.
$\hfill \Box$

\vspace{.25in}

We now explain another approach to Theorem~\ref{B-highly-ctd}, using results from Donaldson--Kronheimer~\cite[Section 5.1]{DK} regarding the \emph{universal bundle} for framed connections.  (I thank Ralph Cohen for drawing my attention to this reference.)  Although this approach gives an alternate proof of Theorem~\ref{B-highly-ctd}, the proof given above has further consequences and plays an important role in ongoing joint work with T. Baird.
We will assume the reader is familiar with the results from~\cite[Section 5.1]{DK}, which apply to arbitrary compact manifolds $X$.  Let $\A$ denote the space of \emph{all} connections on the trivial bundle over $X$.  Donaldson and Kronheimer construct a universal bundle $\mathcal{P}$ over $\A/ \G_0 \cross X$, and show that the adjoint of its classifying map\footnote{It may be somewhat optimistic to assume the existence of this classifying map, since it is unclear whether $\A/\G_0$ is paracompact.   Away from Yang-Mills moduli spaces Uhlenbeck's theorem~\cite[Theorem 3.6]{Uhl} gives only \emph{weak} convergence.  Moreover, standard metrization theorems cannot be applied so easily since it is unclear whether this space is regular (although continuity of the holonomy along loops proves that $\A/\G_0$ is at least Hausdorff).  Hence one should replace $\A/\G_0$ in this argument with a CW--approximation $K\stackrel{\heq}{\to} \A/\G_0$.  The pullback of $\mathcal{P}$ over $K$ will then admit the desired classifying map.
Our proof of Theorem~\ref{B-highly-ctd} avoids such technicalities.}
$u\co \A/\G_0\cross X \to BU(n)$ 
is a weak equivalence
$$u^\vee \co \A/\G_0 \maps \Map^0_* (X, BU(n)).$$
Rather than producing a map $\wt{\T}\co \mathcal{A} \to \Map_*(X, EU(n))$ analogous to our map $\T$, Donaldson and Kronheimer show that there are bijections 
$$\langle T, \A/\G_0\rangle \isom \langle T, \Map_*^0 (X, BU(n)\rangle$$
between the sets of basepoint-preserving homotopy classes of based maps.\footnote{Donaldson and Kronheimer in fact work only with \emph{unbased} mapping spaces, but since $\A/\G_0$ and $\Map^0_* (X, BU(n))$ are not simply connected in general, it is necessary to choose basepoints and work with pointed mapping spaces.  Their argument is easily modified to work in this case; the data of a ``family of framed connections" on a bundle $P\to T\cross X$ should be augmented to include a chosen trivialization of $P$ over $\{t_0\}\cross X$, which carries the connection over $\{t_0\}\cross X$ to the trivial connection on the trivial bundle, and isomorphisms between families should commute with this trivialization.} 

The space
$$\U_X = \left(\Hom(\pi_1 X, U(n)) \cross \wt{X} \cross U(n)\right)/\pi_1 X$$
is a principal $U(n)$--bundle 
over $\Hom(\pi_1 X, U(n))\cross X$, and
carries a framed family of \emph{flat} connections: the restriction of $\U_X$ to $\{\rho\} \cross X$ is just $E_\rho$, which carries the connection $A_\rho$.  Note that we have shown that this connection varies continuously over the representation space.
In this setting, universality means that this framed family of connections is classified by a mapping 
$$\Hom(\pi_1 X, U(n)) \cross X \maps \A/\G_0 \cross X.$$ 
A careful tracing of the definitions shows that this is just the map induced by the inclusion $\Hom(\pi_1 X, U(n)) \isom \fc/\G_0 \stackrel{i}{\injects} \A/\G_0$.  

When $X = S$ is an aspherical surface, this inclusion is  highly connected, since the quotient map for the based gauge group is a principal bundle, $\fc$ is highly connected (Proposition~\ref{fc-conn}), and $\A$ is contractible.  The bundle $\U_S$ is classified by the map $u\circ (i\cross \mathrm{Id})$, whose adjoint $u^\vee \circ i$ is highly connected.
Hence the adjoint of \emph{every} classifying map for $\U_S$ is highly connected.  Now, $\U_S$ is the pullback of the bundle
$$\U'_S  = \left( \Hom(\pi_1 S, U(n)) \cross E\pi_1 S \cross U(n)\right)/\pi_1 S \maps \Hom(\pi_1 S, U(n)) \cross B\pi_1 S$$
under the map $f\co S\to B\pi_1 S$ (as can be seen by considering the map $\U_S\to \U'_S$ defined via the maps $\wt{f}_\rho$ in (\ref{E_rho})).  
Moreover, $\U'_S$ is classified by the adjoint $B^\vee$ of the map $B$ (as can be seen by considering the maps $u_\rho$ in (\ref{E_rho})).

Hence the map
$$\Hom(\pi_1 S, U(n)) \cross S \xmaps{\id \cross f} \Hom(\pi_1 S, U(n)) \cross B\pi_1 S \srm{B^\vee} BU(n)$$
classifies $\U_S$, so its adjoint is highly connected.  But this adjoint fits into a commutative diagram
$$\xymatrix{ \Hom(\pi_1 S, U(n)) \ar[r]^(.43){B} \ar[dr] &  \bMap (B\pi_1 S, BU(n)) \ar[d]^(.45){f^*}_(.45){\heq}\\
		& \bMap (S, BU(n)),
		}
$$
so it has the same connectivity as $B$.


\section{Excision for connected sum decompositions of surfaces}$\label{excision}$

In this section we consider connected sum decompositions $S = S_1 \# S_2$, where the $S_i$ are compact surfaces other than the sphere.  We will prove an excision theorem that allows us to the build the deformation $K$--theory spectrum $\K(\pi_1 S)$ out of the deformation $K$--theory spectra of the \emph{free} groups appearing in the amalgamation diagram associated to our connected sum decomposition.
Work of Lawson~\cite{Lawson-simul} provides us with a very complete understanding of $\K$ for free groups, and in subsequent sections we will use Lawson's results to deduce the main theorems of this paper.

Our analysis of the mapping
$$B\co \Hom(\Gamma, U(n)) \maps \bMap(B\Gamma, BU(n))$$
in the previous section will be the key ingredient in our excision theorem (Theorem~\ref{excision-thm}). 
Unlike the constructions in the author's previous work~\cite{Ramras-surface}, the map $B$ is natural with respect to the group $\Gamma$, and hence behaves well with respect to amalgams.  The constructions in that paper, which relate the homotopy orbit space $\Hom\left(\pi_1 S, U(n)\right)_{hU(n)}$ to the unbased mapping space
$\Map(M, BU(n))$, pass through Sobolev spaces of connections, which are well-behaved only under smooth maps between manifolds of the same dimension.  Although these Sobolev spaces played an important role in our connectivity calculations for the map $B$, this map itself is independent of any geometric structure on the surface.

\subsection{Homotopy pullbacks and pushouts}

We will need some simple facts about homotopy pullbacks and pushouts.  Given a commutative square
\begin{equation}\label{square}
\xymatrix{ A \ar[r]^i \ar[d]^j & X \ar[d]^f\\
		    Y \ar[r]^g & Z
		    }
\end{equation}
of based spaces, let $\mcZ$ denote the homotopy pushout (i.e. the reduced double mapping cylinder) of the maps $Y \stackrel{j}{\longleftarrow} A \stackrel{i}{\maps} X$.
Then there is a natural map $\Psi\co \mcZ \to Z$ (which collapses both cylinders in $\mcZ$), and the square (\ref{square}) is said to be \emph{strongly homotopy co-cartesian} if $\Psi$ is a homotopy equivalence.

\begin{lemma}$\label{po-pb}$
For every strongly homotopy co-cartesian square of based spaces
\begin{equation}\label{hcc}\xymatrix{ A \ar[r]^i \ar[d]^j & X \ar[d]^f\\
		    Y \ar[r]^g & Z
		    }
\end{equation}
and every based space $W$, the induced diagram
\begin{equation}\label{hcm}
\xymatrix{ \bMap(Z, W) \ar[r]^{f^*} \ar[d]^{g^*} & \bMap(X, W) \ar[d]^{i^*}\\
		    \bMap(Y, W) \ar[r]^{j^*} & \bMap(A, W)
		    }
\end{equation}
is homotopy cartesian.
\end{lemma}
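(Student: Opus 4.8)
The plan is to show that applying $\bMap(-,W)$ to the homotopy pushout square converts it into a homotopy pullback square by a direct exponential‑law computation, and then to feed in the one hypothesis we are given, namely that the collapse map $\Psi\co\mcZ\to Z$ is a homotopy equivalence. Thus the proof splits into three steps: (i) identify $\bMap(\mcZ,W)$ with the standard double mapping path space model for the homotopy pullback of $\bMap(Y,W)\stackrel{j^*}{\maps}\bMap(A,W)\stackrel{i^*}{\lmaps}\bMap(X,W)$; (ii) check that, under this identification, the natural comparison map from $\bMap(Z,W)$ is exactly $\Psi^*$; (iii) conclude that this comparison map is a weak equivalence, so (\ref{hcm}) is homotopy cartesian in the sense of Definition~\ref{k,l-ctd}.

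For step (i): the reduced double mapping cylinder $\mcZ$ is built from $Y$ and $X$ by attaching the reduced cylinder $A\wedge I_+$ along $j$ at the $0$‑end and along $i$ at the $1$‑end, so a based map $\mcZ\to W$ is precisely the data of based maps $\phi\co Y\to W$ and $\psi\co X\to W$ together with a based map $A\wedge I_+\to W$ restricting to $\phi\circ j$ and $\psi\circ i$ at the two ends. Since we work throughout in compactly generated based spaces, the exponential law identifies a based map $A\wedge I_+\to W$ with a path $I\to\bMap(A,W)$, and the two end conditions say exactly that this path runs from $j^*\phi$ to $i^*\psi$. This gives a homeomorphism
$$\bMap(\mcZ, W)\;\homeo\;\holim\Bigl(\bMap(Y,W)\stackrel{j^*}{\maps}\bMap(A,W)\stackrel{i^*}{\lmaps}\bMap(X,W)\Bigr),$$
and it is basepoint‑preserving when $\bMap(\mcZ,W)$ is based at the constant map and the homotopy limit carries the basepoint specified in Definition~\ref{k,l-ctd}.

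For steps (ii) and (iii): the collapse map $\Psi\co\mcZ\to Z$ restricts to $g$ on $Y$, to $f$ on $X$, and is constant in the cylinder direction with value $g\circ j=f\circ i$. Hence, for $\phi\in\bMap(Z,W)$, the map $\Psi^*\phi=\phi\circ\Psi$ is $\phi\circ g$ on $Y$, $\phi\circ f$ on $X$, and constant in the cylinder direction, which under the homeomorphism above corresponds to the triple $(g^*\phi,\,c,\,f^*\phi)$ with $c$ the constant path at $j^*g^*\phi=i^*f^*\phi$ (these agree because (\ref{hcc}) commutes). But $\phi\mapsto(g^*\phi,c,f^*\phi)$ is exactly the natural map from $\bMap(Z,W)$ to the homotopy pullback of (\ref{hcm}). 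So that natural map is $\Psi^*$, and since $\Psi$ is a homotopy equivalence (this is the strong homotopy co‑cartesian hypothesis), $\Psi^*$ is a homotopy equivalence, in particular a weak equivalence; therefore (\ref{hcm}) is homotopy cartesian.

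The only genuine content beyond bookkeeping is the exponential‑law identification in step (i), and the one thing to be careful about is using \emph{reduced} cylinders and smash products consistently so that all the maps in play are honestly basepoint‑preserving and the homeomorphism really does respect basepoints; this is routine given the compactly generated convention, and I do not anticipate any real obstacle.
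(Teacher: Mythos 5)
Your overall strategy coincides with the paper's: identify $\bMap(\mcZ,W)$ with the double mapping path space model for the homotopy pullback of $j^*$ and $i^*$, check that under this identification the canonical comparison map out of $\bMap(Z,W)$ is $\Psi^*$, and conclude from the hypothesis that $\Psi$ is a homotopy equivalence. Steps (ii) and (iii) are correct as written. The one point you pass over too quickly is the assertion in step (i) that the evident set-level bijection between $\bMap(\mcZ,W)$ and the space of compatible triples $(\phi,H,\psi)$ is a \emph{homeomorphism}. The exponential law only supplies the identification $\bMap(A\wedge I_+,W)\homeo \Map(I,\bMap(A,W))$; you additionally need that a based map out of the quotient space $\mcZ$, with the compact-open topology, corresponds homeomorphically (not merely bijectively) to a compatible triple of maps on the three pieces — that is, that $\bMap(-,W)$ carries this particular colimit to a limit. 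This is precisely where the paper's proof concentrates its effort: it first assumes $i$ and $j$ are embeddings, so that the quotient map $Y\coprod (A\cross I)\coprod X \to \mcZ$ is proper and the induced map of mapping spaces is an embedding onto the subspace of compatible triples, and then reduces the general case to this one by replacing $X$ and $Y$ with the mapping cylinders of $i$ and $j$. Your stronger, hypothesis-free claim is in fact true under the paper's standing convention of working in compactly generated spaces: that category is cartesian closed, so $\bMap(-,W)$ is (contravariantly) a right adjoint and converts arbitrary colimits, in particular the reduced double mapping cylinder, into limits; concretely, for any test space $T$ one has natural bijections
$$\Map_*\bigl(T, \bMap(\mcZ,W)\bigr)\;\isom\;\Map_*(T\sm \mcZ_+ ,W)\;\isom\;\lim \Map_*(T\sm(-)_+,W),$$
and Yoneda upgrades the set bijection to a homeomorphism. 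If you either invoke this fact explicitly or insert the paper's embedding-plus-mapping-cylinder reduction, your argument is complete.
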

\begin{proof}  We first consider the case in which $i$ and $j$ are embeddings (which is the only case we will need).  Let $\mcZ$ denote the homotopy pushout of $i$ and $j$.  Since $i$ and $j$ are embeddings, the quotient map $X\coprod A\cross I \coprod Y \to \mcZ$ is proper, and hence $\Map_*(\mcZ, W)$ embeds as a subspace of the product
$$\bMap(X,W)\cross \bMap(A\cross I, W) \cross \bMap(Y, W).$$
This subspace is homeomorphic to the homotopy pullback $\mcP$ of the maps $i^*$ and $j^*$ (since we are working in the category of compactly generated spaces, where $\Map(A\cross I, W) \homeo \Map(I, \Map(A, W))$).
The map $\bMap(Z, W) \to \mcP$ can now be identified with the map induced by the projection $Z\srt{\heq} \mcZ$, which is a homotopy equivalence.

To deal with the general case, let $M_i$ and $M_j$ denote the reduced mapping cylinders of $i$ and $j$.  The square formed from (\ref{hcc}) by replacing $X$ and $Y$ with $M_i$ and $M_j$ (respectively) is still strongly homotopy co-cartesian.  There is a homotopy commutative cube connecting these two squares, and applying $\bMap(-, W)$ yields a homotopy commutative cube linking (\ref{hcm}) to the corresponding square with $X$ and $Y$ replaced by $M_i$ and $M_j$.  The maps between these squares are homotopy equivalences, and it can be checked that one of the squares is homotopy cartesian if and only if the other is (this requires analyzing the specific homotopies making the cube homotopy commutative, which in this case are quite simple).  Thus we have reduced to the case in which $i$ and $j$ are embeddings.
\end{proof}

We will need the following simple consequence of the Mayer--Vietoris sequence (\ref{M-V}) associated to a homotopy pullback. 

\begin{lemma}$\label{hpb-cube}$
Consider a commutative cube of spaces
$$
\xymatrix
{
A \ar[rr] \ar[rd]^\alpha \ar[dd] & {} & X \ar[rd]^\beta \ar'[d][dd] & {}\\
{} & A' \ar[rr] \ar[dd] & {} & X' \ar[dd]\\
Y \ar'[r][rr] \ar[dr]^\gamma & {} & Z \ar[rd]^\delta & {}\\
{} &Y' \ar[rr] & {} & Z'.
}
$$
in which the maps $\beta$, $\gamma$, and $\delta$ are weak equivalences and the front face (containing $A', X', Y'$ and $Z'$) is homotopy cartesian.  Then for any numbers $0\leqs l \leqs k \leqs \infty$, the map $\alpha$ is $(l,k)$--connected  (Definition~\ref{k,l-ctd}) if and only if the natural map
$$\eta\co A\maps \holim (Y\to Z \leftarrow X)$$
is $(l,k)$--connected.
\end{lemma}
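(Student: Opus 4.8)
The plan is to compare the Mayer--Vietoris sequences of the two homotopy pullbacks appearing in the cube and apply the five lemma. First I would form the homotopy pullback $H = \holim(Y\to Z\leftarrow X)$ of the back face and $H' = \holim(Y'\to Z'\leftarrow X')$ of the front face. The commutativity of the cube induces a map $H\to H'$, and since $\beta$, $\gamma$, $\delta$ are weak equivalences, the remark following diagram (\ref{MV-diag}) (``if the vertical maps in (\ref{MV-diag}) are weak equivalences, so is the induced map between the homotopy pullbacks'') shows $H\to H'$ is a weak equivalence. On the other hand, the front face being homotopy cartesian means the natural map $A'\to H'$ is a weak equivalence. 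So in the commutative square
\begin{equation*}
\xymatrix{ A \ar[r]^(.4)\eta \ar[d]^\alpha & H \ar[d]^{\heq} \\
           A' \ar[r]^(.4)\heq & H' }
\end{equation*}
the right-hand and bottom maps are weak equivalences, whence $\alpha$ and $\eta$ have the same connectivity; in particular one is $(l,k)$--connected iff the other is. (Here I am using that the notion of $(l,k)$--connectedness, being defined purely in terms of the maps on homotopy groups $\pi_n$, is detected after composing with weak equivalences on either side; a two-out-of-three / diagram-chase argument gives this for each $n$ in the relevant range.)

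The only subtlety is basepoints: $(l,k)$--connectedness as in Definition~\ref{k,l-ctd} is a statement about based spaces, and the homotopy pullbacks are based at the triples $(y_0, c_{z_0}, x_0)$ etc. I would note that the natural maps $A\to H$, $A'\to H'$ and the cube maps all preserve these basepoints, so the square above is a square of based maps and the argument goes through verbatim. Since the cube need only be commutative (not homotopy commutative) for this lemma as stated, no extra care with homotopies is needed — the induced maps on homotopy pullbacks are literally given by functoriality of $\holim$.

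I do not expect any real obstacle here; the statement is essentially a bookkeeping consequence of the already-established naturality of the Mayer--Vietoris sequence together with the two-out-of-three property for connectivity of maps. The one place to be slightly careful is to phrase the two-out-of-three step correctly: from the square above, $\eta$ factors (up to the identification coming from the weak equivalences) as $A\xrightarrow{\alpha} A'\xrightarrow{\heq}H'\xleftarrow{\heq}H$, so on homotopy groups $\eta_*$ and $\alpha_*$ differ by isomorphisms in every degree, and hence $\eta_*$ is an isomorphism (resp. surjection, resp. injection) in a given degree exactly when $\alpha_*$ is. This immediately yields the claimed equivalence for all $0\leqs l\leqs k\leqs\infty$. $\hfill\Box$
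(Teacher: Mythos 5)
Your proof is correct and is essentially identical to the paper's: both form the commutative square comparing $A\to A'$ with the induced weak equivalence $\holim(Y\to Z\leftarrow X)\to\holim(Y'\to Z'\leftarrow X')$, use the hypothesis that $A'\to\holim(Y'\to Z'\leftarrow X')$ is a weak equivalence, and conclude that $\alpha$ and $\eta$ have the same connectivity. The attention to basepoints is a welcome (if minor) addition beyond what the paper records.
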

\begin{proof}  We have a commutative diagram
$$\xymatrix{ A \ar[r]^\alpha \ar[d]^\eta & A' \ar[d]^{\eta'} \\
	\holim (Y\to Z \leftarrow X) \ar[r]^(.47){\wt{\alpha}}  & \holim (Y' \to Z' \leftarrow X').
	}
$$
The hypotheses imply that $\eta'$ and $\wt{\alpha}$ are weak equivalences, and it follows that $\alpha$ and $\eta$ have the same connectivity.
\end{proof}

\subsection{The excision theorem}

Consider a connected sum decomposition  $$S = S_1 \# S_2,$$ where the $S_i$ are surfaces other than the sphere.
Then we may write $S$ as a union of two surfaces with boundary $S'_1$ and $S'_2$, glued along their common boundary circle; of course $S'_i$ is just $S_i$ with an open disc removed.  The fundamental
groups of $S'_1$ and $S'_2$ are free groups, and the fundamental group of the boundary circle injects into these free groups: a map from $\bbZ$ to a free group is either trivial or injective, and if the map were trivial then $\pi_1 (S_i)$ would be isomorphic to the free group $\pi_1 S'_i$ (note that $S_i$ is the mapping cone of the inclusion $S^1\injects S_i$ of the boundary circle).  Letting $i\co \bbZ \injects F_k$ and $j\co \bbZ\injects F_l$ denote these inclusions, the Van Kampen theorem shows that $\pi_1 S$ is the pushout of the diagram
$$F_k \stackrel{i}{\hookleftarrow} \bbZ \stackrel{j}{\injects} F_l$$
where $k$ and $l$ are the ranks of $\pi_1 S'_1$ and $\pi_1 S'_2$ respectively.

\begin{theorem}$\label{excision-thm}$
Let $S_1$ and $S_2$ be compact surfaces other than the sphere, and let $S = S_1 \# S_2$.  Then with the notation established in the previous paragraph,
the map
$$\Phi\co \K( \pi_1 S) \maps \mathrm{holim} \left( \K (F_k) \stackrel{i^*}{\maps} \K (\Z)  \stackrel{j^*}{\longleftarrow}  \K (F_l) \right)$$
is injective on $\pi_0$ and an isomorphism on $\pi_*$ for $*>0$.  If $S$ is non-orientable, then $\Phi_*$ is an isomorphism on $\pi_0$ as well.
\end{theorem}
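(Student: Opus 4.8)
\noindent\emph{Proof strategy.}\quad
The plan is to reduce the statement to the connectivity theorem for the natural map $B$ (Theorem~\ref{B-highly-ctd}) by way of Proposition~\ref{reduction-prop}, comparing $B$ for $\pi_1 S$ with $B$ for the free groups $F_k$, $F_l$ and for $\bbZ$ through a naturality cube. First note that $S = S_1 \# S_2$ is aspherical, since a connected sum of two surfaces other than the sphere is neither $S^2$ nor $\bbRP^2$; thus Theorem~\ref{B-highly-ctd} applies. As recorded before the statement, Van Kampen identifies $\pi_1 S$ with the pushout of $F_k \stackrel{i}{\hookleftarrow} \bbZ \stackrel{j}{\injects} F_l$ with $i$, $j$ injective, and $\Rep(\pi_1 S)$ is compatibly stably group-like with respect to the trivial representation: $\Rep(\pi_1 S)$ is stably group-like by the results cited in Section~\ref{zeroth-sp}, while $\Rep(F_k)$, $\Rep(F_l)$, $\Rep(\bbZ)$ are stably group-like with respect to the restricted trivial representations because $\Hom(F_k, U(n)) = U(n)^k$, $\Hom(F_l, U(n)) = U(n)^l$ and $\Hom(\bbZ, U(n)) = U(n)$ are connected. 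By Proposition~\ref{reduction-prop}, it therefore suffices to show that the natural map
$$\phi\co \Hom(\pi_1 S, U(n)) \maps \holim\left(\Hom(F_k, U(n)) \stackrel{i^*}{\maps} \Hom(\bbZ, U(n)) \stackrel{j^*}{\lmaps} \Hom(F_l, U(n))\right)$$
is $(1, 2g(n-1)+1)$--connected for every $n$ when $S = M^g$ is orientable, and $(0, \wt{g}(n-1))$--connected for every $n$ when $S$ is non-orientable. Granting this, fix a finite bound $N$: the map $\phi$ is $(1,N)$--connected (resp. $(0,N)$--connected) for all $n$ large enough, so Proposition~\ref{reduction-prop} yields that $\Phi$ is $(1,N)$--connected (resp. $(0,N)$--connected); since $N$ was arbitrary, $\Phi$ is $(1,\infty)$--connected (resp. $(0,\infty)$--connected), which is precisely the claim.

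To obtain the stated connectivity of $\phi$, I would compare it with the corresponding map on based mapping spaces out of classifying spaces. The square of simplicial classifying spaces attached to the amalgam is, up to homotopy equivalence of squares, the geometric square expressing $S$ as $S_1' \cup_{S^1} S_2'$, where $S_i'$ is $S_i$ with an open disc removed and the two copies are glued along the connecting circle; since this circle includes into each $S_i'$ as a cofibration, the square is strongly homotopy co-cartesian. Lemma~\ref{po-pb}, applied with $W = BU(n)$, then shows that the square
$$\xymatrix{ \bMap(B\pi_1 S, BU(n)) \ar[r] \ar[d] & \bMap(BF_k, BU(n)) \ar[d] \\ \bMap(BF_l, BU(n)) \ar[r] & \bMap(B\bbZ, BU(n)) }$$
is homotopy cartesian. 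By naturality of $B$ in the group variable there is a strictly commutative cube whose back face is the $\Hom$--square defining $\phi$, whose front face is this homotopy cartesian $\bMap$--square, and whose four vertical maps are $B_{\pi_1 S}$, $B_{F_k}$, $B_{F_l}$, $B_\bbZ$ (with trivial representations and constant maps as basepoints throughout). For a free group $F_k$ one has $BF_k \heq \bv_k S^1$, hence $\bMap(BF_k, BU(n)) \heq (\Omega BU(n))^k$, under which $B_{F_k}$ becomes the $k$--fold product of the standard weak equivalence $U(n) \stackrel{\heq}{\maps} \Omega BU(n)$; thus $B_{F_k}$, and likewise $B_{F_l}$ and $B_\bbZ$, are weak equivalences. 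Lemma~\ref{hpb-cube} now applies and shows that $\phi$ is $(l,k)$--connected if and only if $B_{\pi_1 S} = B$ is; Theorem~\ref{B-highly-ctd} then supplies the bounds quoted in the previous paragraph.

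The genuinely delicate points here are bookkeeping rather than geometry. One must keep the basepoint conventions of Definition~\ref{k,l-ctd} uniform across the cube (trivial representations, constant maps, and the canonical triple basepoint in each homotopy limit), and one must settle for injectivity---not surjectivity---on $\pi_0$ in the orientable case. That loss is forced: since the three vertical maps over $F_k$, $F_l$, $\bbZ$ are weak equivalences and homotopy limits preserve weak equivalences, $\pi_0$ of the target of $\phi$ is isomorphic to $\pi_0 \bMap(B\pi_1 M^g, BU(n)) = [M^g, BU(n)] \isom \bbZ$, which records the $c_1\neq 0$ bundles over $M^g$ carrying no flat connection; as $\Hom(\pi_1 M^g, U(n))$ is connected, $\phi$, and hence $\Phi_*$, can only be injective on $\pi_0$. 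In the non-orientable case $\Hom(\pi_1 \Sigma, U(n))$ already has two components, $B$ is bijective on $\pi_0$ for $n \geqs 2$, and no such loss occurs, so $\Phi_*$ is an isomorphism on $\pi_0$ as well. The one substantive verification I expect to require care is that the classifying-space square above is strongly homotopy co-cartesian and that $B_{F_k}$, $B_{F_l}$, $B_\bbZ$ are weak equivalences with the precise basepoints in play; granting this, Lemmas~\ref{po-pb} and~\ref{hpb-cube} and Proposition~\ref{reduction-prop} assemble the argument.
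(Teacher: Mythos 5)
Your proposal is correct and follows essentially the same route as the paper: reduce to representation spaces via Proposition~\ref{reduction-prop}, compare the $\Hom$--square to the $\bMap$--square through the naturality cube for $B$, verify that the classifying-space square is strongly homotopy co-cartesian (the paper cites injectivity of $i,j$ plus Hatcher's Theorem 1B.11 rather than the geometric gluing, but these are equivalent) and that $B$ is a weak equivalence on free groups, then invoke Lemmas~\ref{po-pb} and~\ref{hpb-cube} and Theorem~\ref{B-highly-ctd}. The only cosmetic difference is your explicit ``fix $N$ and let $n\to\infty$'' bookkeeping, which is exactly how the paper's appeal to Proposition~\ref{reduction-prop} is meant to be read.
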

\begin{proof}  We will show that the mapping
\begin{equation}\label{Hueb}
\xymatrix{ \Hom (\pi_1 S, U(n)) = \Hom (F_k*_\bbZ F_l, U(n)) \ar[d]^{\Phi_n}\\
	 \holim \left( \Hom(F_k, U(n)) \stackrel{i^*}{\maps}
					      	\Hom(\bbZ, U(n))
						\stackrel{j^*}{\lmaps}
					      	\Hom(F_l, U(n))
		 \right)
		 }
\end{equation}
is $(1, 2g(n-1)+1)$--connected if $S = M^g$ and $(g(n-1))$--connected if $S$ is non-orientable with orientation double cover $M^{\wt{g}}$.  Since the $S_i$ are not spheres, we have $g\geqs 2$ and $\wt{g} \geqs 1$.  Hence the upper connectivity bounds tend to infinity with $n$, and applying Proposition~\ref{reduction-prop} completes the proof (note that all the monoids $\Rep(-)$ are stably group-like with respect to the trivial representations $1\in \Hom(-, U(1))$, by the results in Ramras~\cite[Section 4]{Ramras-surface}). 

To study the map $\Phi_n$, we will consider
the commutative diagram
\begin{equation}\label{B-diag}
\xymatrix@C=0pt{
\Hom(\pi_1 S, U(n)) \ar[rr] \ar[rd]^{B_{\pi_1 S}} \ar[dd] & {} & \Hom(F_l, U(n)) \ar[rd]^{B_{F_l}} \ar'[d][dd] & {}\\
{} & \bMap (B\pi_1 S, BU(n)) \ar[rr] \ar[dd] & {} & \bMap (BF_l, BU(n)) \ar[dd]\\
\Hom(F_k, U(n)) \ar'[r][rr] \ar[dr]^{B_{F_k}} & {} & \Hom(\bbZ, U(n)) \ar[rd]^{B_{\bbZ}} & {}\\
{} & \bMap (BF_k, BU(n)) \ar[rr] & {} & \bMap (B\bbZ, BU(n)).
}
\end{equation}

We claim that this diagram satisfies the hypotheses of Lemma~\ref{hpb-cube}.  This will show that the map $\Phi_n$ has the same connectivity as the map $B_{\pi_1 S}$, and the latter map has the desired connectivity by Theorem~\ref{B-highly-ctd}.

First, injectivity of the maps $i\co \bbZ \to F_k$ and $j\co \bbZ \to F_l$ implies that the diagram 
$$\xymatrix{ B\bbZ \ar[r]^j \ar[d]^i & BF_l \ar[d]\\
		    BF_k \ar[r] & B\pi_1 S
		    }
$$
is strongly homotopy co-cartesian (see, for example, Hatcher~\cite[Theorem 1B.11]{Hatcher}).
By Lemma~\ref{po-pb}, we conclude that the square of mapping spaces in diagram (\ref{B-diag}) is homotopy cartesian.  

We must show that the maps $B_{F_k}$, $B_{F_l}$, and $B_{\bbZ}$ in Diagram (\ref{B-diag}) are weak equivalences.  This is in fact true for any free group $F_m$, and could be proven using flat connections as in Theorem~\ref{B-highly-ctd}.  We prefer to give a more elementary explanation.  For any $m\geqs 1$, we have a homeomorphism $\Hom(F_m, U(n)) \homeo U(n)^m$, so the standard weak equivalence $U(n) \heq \Omega BU(n)$ (induced by the map $\Sigma U(n) \maps BU(n)$) yields a weak equivalence
$$U(n)^m \stackrel{\heq}{\maps}  \bMap \left(\bigvee_m S^1, BU(n) \right) = \prod_m  \bMap(S^1, BU(n)).$$
Similarly, each generator of $F_m$ corresponds to a 1-simplex, and hence a loop, in $BF_m$.  This yields a map $\bigvee_m S^1 \stackrel{\eta}{\injects} BF_m$, which is an isomorphism on $\pi_1$ and hence a homotopy equivalence.  The induced map
$$\Map_* (BF_m, BU(n)) \srm{j^*} \bMap \left(\bigvee_m S^1, BU(n) \right)$$
is then a homotopy equivalence as well.
We now have a commutative diagram
$$\xymatrix@C=50pt{ U(n)^m \ar[r]^(.39){B_{F_m}} \ar[dr]_\heq & \Map_* (BF_m, BU(n)) \ar[d]^\heq \\
					& \bMap\left(\bigvee_m S^1, BU(n)\right),
		}
$$
so $B_{F_m}$ is a weak equivalence as desired.
\end{proof}

\begin{remark}$\label{Kdef*}$
In Ramras~\cite{Ramras-surface}, the deformation $K$--theory of surface groups was computed by relating these groups to complex $K$--theory.  The groups $\K_*(\pi_1 S)$ can also be computed from the Mayer-Vietoris sequence associated to a connected sum decomposition $S = S_1 \# S_2$ (in the non-orientable case, the decomposition (\ref{amalg}) is simplest to use).  For this computation, one needs to understand the maps on $\K_*$ induced by the inclusions $\bbZ\injects \pi_1 S'_1$.  On $\K_0$, these maps induce the identity $\bbZ \srt{=} \bbZ$ because for free groups, $\K_0$ simply keeps track of the dimension of a representation.  On $\K_1$, Lawson has established a natural isomorphism $\K_1 (F_l) \isom \Hom(F_l, \bbZ)$.  Finally, Proposition~\ref{free-Bott} shows that the maps on the higher $\K_*$ are isomorphic to one of these two (depending on whether $*$ is odd or even).
\end{remark}

We note that the proof of Theorem~\ref{excision-thm} is related to work of Huebschmann~\cite{Huebs}, who proved that the homotopy pullback appearing in (\ref{Hueb}) is weakly equivalent to $\bMap(S, BU(n))$.  Huebschmann's results in fact apply to any two-dimensional complex, and for higher dimensional complexes he provides a subtler version of this homotopy pullback, which is again weakly equivalent to the based mapping space.  It would be interesting to know whether Huebschmann's map, when restricted to the representation space itself, actually agrees (up to homotopy) with the classifying map $B$.


\section{The stable moduli space of flat connections over a surface}$\label{coarse-moduli}$

We can now compute the homotopy type of the \emph{stable moduli space} of flat (unitary) connections over a surface.  By definition, this is the space
$$\Mf (S) =  \colim_{n\to \infty} \Hom\left(\pi_1 S, U(n)\right)/U(n)
\homeo \colim_{n\to \infty} \left( \flatc(S, n)/\isom \right)$$
where $\flatc(S, n)$ is the disjoint union, over all principal $U(n)$--bundles $P \to S$, of the space of flat connections on $P$.  The equivalence relation $\isom$ is induced by bundle isomorphisms.  It follows from Uhlenbeck compactness that the particular Sobolev topology on the spaces of flat connections on the bundles $P$ does not matter, since any reasonable choice yields a moduli space homeomorphic to $\Hom\left(\pi_1 S, U(n)\right)/U(n)$.  For details, see Ramras~\cite[Sections 3 and 8]{Ramras-surface}.

\subsection{The homotopy type of the stable moduli space}

\begin{theorem}$\label{moduli}$ Let $M^g$ denote an orientable surface of genus $g>0$.  Then there is a homotopy equivalence
$$\Mf (M^g) \heq \Sym^{\infty} (M^g).$$
If $\Sigma$ is a compact, aspherical, non-orientable surface, then
$$\Mf (\Sigma) \heq T^{k} \coprod T^k$$
where $k = \textrm{rank} (H^1 (\Sigma); \bbZ)$ and $T^k$ denotes the $k$--dimensional torus.
\end{theorem}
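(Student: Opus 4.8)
The plan is to work in deformation $K$--theory, where the excision theorem applies, and then translate back.  By Lawson's theorem (recalled in the introduction), the group completion of the monoid $\coprodmo_{n}\Hom(\pi_1 S,U(n))/U(n)$ is the homotopy cofiber $C$ of the Bott map $\Susp^{2}\K(\pi_1 S)\maps\K(\pi_1 S)$; and since $\Rep(\pi_1 S)$ is stably group-like with respect to the trivial representation, Lemmas~\ref{EM} and~\ref{EM2} identify this group completion, up to the usual indexing by virtual rank, with $\bbZ\cross\Mf(S)$, and identify $\Mf(S)$ itself with a specific sub-union of components of $\Omega^{\infty}C$ — the identity component in the orientable case (the representation spaces, hence $\Mf(M^{g})$, being connected) and two components in the non-orientable case.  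So it suffices to compute $\pi_{*}C$, together with enough structure to recognize the homotopy type of $\Omega^{\infty}C$.

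First I would compute $\K(\pi_1 S)$ and its Bott map.  Writing $S=S_{1}\#S_{2}$ and using van Kampen as before Theorem~\ref{excision-thm}, $\pi_1 S=F_{k}*_{\bbZ}F_{l}$, and Theorem~\ref{excision-thm} (with the naturality of Bott) identifies $\K(\pi_1 S)$, compatibly with Bott, with $\holim\big(\K(F_{k})\maps\K(\bbZ)\lmaps\K(F_{l})\big)$ in positive degrees — in all degrees when $S$ is non-orientable.  Lawson's computations \cite{Lawson-simul} give $\K(F_{m})\heq\ku\vee\bigvee_{m}\Susp\ku$, with Bott acting as the standard Bott self-map on each wedge summand (cf.\ Proposition~\ref{free-Bott}); in particular Bott is an isomorphism on $\pi_{n}$ for all $n\geqs 0$ on each of $\K(F_{k})$, $\K(\bbZ)$, $\K(F_{l})$.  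A Mayer--Vietoris argument then shows Bott is an isomorphism on $\pi_{n}\holim(\cdots)$ for all $n\geqs 0$, and computes $\pi_{1}\holim(\cdots)$ as the kernel of the map $\bbZ^{k}\oplus\bbZ^{l}\maps\bbZ$ induced (via Lawson's isomorphism $\K_{1}(F_{m})\isom\Hom(F_{m},\bbZ)$) by the amalgamating circle, which maps to a boundary word $\prod[a_{i},b_{i}]$ on an orientable piece and $\prod a_{i}^{2}$ on a non-orientable piece; this kernel is free abelian of rank $\textrm{rank}\,H^{1}(S)$.

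Feeding this into the cofiber long exact sequence $\K_{n-2}(\pi_1 S)\srm{\beta}\K_{n}(\pi_1 S)\maps\pi_{n}C\maps\K_{n-3}(\pi_1 S)\srm{\beta}\K_{n-1}(\pi_1 S)$ yields $\pi_{n}C=0$ for $n\geqs 3$, $\pi_{1}C\isom\K_{1}(\pi_1 S)$ free abelian of rank $\textrm{rank}\,H^{1}(S)$, $\pi_{0}C\isom\K_{0}(\pi_1 S)$, and $\pi_{2}C\isom\textrm{coker}\big(\beta\co\K_{0}(\pi_1 S)\maps\K_{2}(\pi_1 S)\big)$.  The bottom two degrees need separate input, because excision is only injective on $\pi_{0}$ in the orientable case: $\K_{0}(\pi_1 S)$ is the group completion of $\pi_{0}\Rep(\pi_1 S)$, which is $\bbN$ for $S=M^{g}$ — so $\K_{0}(\pi_1 M^{g})=\bbZ$, recording dimension only — and $\bbN\oplus\bbZ/2$ for $S$ non-orientable, so $\K_{0}(\pi_1 S)=\bbZ\oplus\bbZ/2$ (consistent with the excision isomorphism $\K_{0}(\pi_1 S)\isom\K_{0}\holim(\cdots)$ and with the component count of Ho--Liu \cite{Ho-Liu-ctd-comp-II}).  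For non-orientable $S$, $\beta\co\K_{0}\maps\K_{2}$ is then an isomorphism and $\pi_{2}C=0$; for $S=M^{g}$, tracking the dimension class and its restrictions through the excision map shows that $\K_{0}(\pi_1 M^{g})=\bbZ$ sits in $\K_{0}\holim(\cdots)$ as a direct summand, so — since $\beta$ is an isomorphism on all of $\pi_{0}\holim(\cdots)$ and $\K_{2}(\pi_1 M^{g})=\bbZ^{2}$ by the same Mayer--Vietoris sequence — the map $\beta\co\bbZ=\K_{0}(\pi_1 M^{g})\maps\K_{2}(\pi_1 M^{g})$ is the inclusion of a direct summand and $\pi_{2}C=\bbZ$.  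The low-degree homotopy computations of \cite{Ramras-surface} may be used to cross-check or to replace this last step.

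It remains to recognize the homotopy type.  Since $\Omega^{\infty}C$ is an infinite loop space, each of its components is a product of Eilenberg-MacLane spaces determined by $\pi_{*}C$.  In the orientable case $\pi_{*}C$ is $\bbZ$, $\bbZ^{2g}$, $\bbZ$ in degrees $0,1,2$ and $0$ above, so $\Mf(M^{g})\heq K(\bbZ^{2g},1)\cross K(\bbZ,2)$, which by the Dold--Thom theorem is exactly $\Sym^{\infty}(M^{g})$.  In the non-orientable case $\pi_{*}C$ is $\bbZ\oplus\bbZ/2$ in degree $0$, $\bbZ^{k}$ with $k=\textrm{rank}\,H^{1}(\Sigma)$ in degree $1$, and $0$ above, so $\Mf(\Sigma)$ is the disjoint union of the two components distinguished by the $\bbZ/2$, each homotopy equivalent to $K(\bbZ^{k},1)=T^{k}$, giving $\Mf(\Sigma)\heq T^{k}\coprod T^{k}$.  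The main obstacle is the degree-$\leqs 2$ bookkeeping, and above all recovering the $K(\bbZ,2)$ factor in the orientable case: this factor is invisible to the naive homotopy pullback of cofibers (because excision fails on $\pi_{0}$ there), so it must be extracted from the bottom of the $\ku$--module spectrum $\K(\pi_1 M^{g})$ via the Bott map, together with the separate input that $\Hom(\pi_1 M^{g},U(n))$ is connected.
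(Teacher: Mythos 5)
Your overall strategy is the same as the paper's: identify $\Mf(S)$ inside the zeroth space of the cofiber $C\heq\Rdef(\pi_1 S)$ of the Bott map, use excision plus Lawson's free-group computation to show $\beta$ is an isomorphism in positive degrees (hence $\pi_*C=0$ for $*>2$), do the degree-$\leqs 2$ bookkeeping, and recognize the answer as a product of Eilenberg--MacLane spaces. Your Mayer--Vietoris rederivation of the low-degree groups (in place of the paper's citation of the computations in~\cite{Ramras-surface}, and of \cite[Proposition 6.6]{Ramras-surface} for the injectivity of $\beta$ on $\K_0(\pi_1 M^g)$) is a reasonable, slightly more self-contained variant, and your ``direct summand'' argument for the image of the dimension class in $\pi_0\holim(\cdots)$ does work, since that image splits the extension of $\ker\bigl(\K_0(F_k)\oplus\K_0(F_l)\to\K_0(\bbZ)\bigr)$ by $\K_1(\bbZ)$.

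Two steps need repair. First, the assertion ``since $\Omega^{\infty}C$ is an infinite loop space, each of its components is a product of Eilenberg--MacLane spaces determined by $\pi_*C$'' is false in general: an infinite loop space with only finitely many nonzero homotopy groups can have nontrivial $k$--invariants (e.g.\ $\Omega^\infty$ of the $1$--truncated sphere spectrum, where $Sq^2$ is the $k$--invariant). The correct justification, which is exactly the content of Lemma~\ref{EM} that you cite earlier, is that $\Rdef(\Gamma)$ is the spectrum of a \emph{topological abelian monoid}, so its zeroth space is (weakly) a topological abelian group and hence genuinely a product of Eilenberg--MacLane spaces; you should invoke that rather than infinite-loop-space structure. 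Second, the theorem asserts a homotopy equivalence, not merely a weak one, and your argument only produces an isomorphism on homotopy groups; the paper closes this gap with Lemma~\ref{CW} ($\Mf(S)$ has the homotopy type of a CW complex, via Illman's equivariant triangulation theorem and the Ind--CW structure on the colimit) together with the fact that $\Sym^\infty$ of a simplicial complex is a CW complex, so that Whitehead's theorem applies. With those two repairs your argument goes through.
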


This theorem will follow from Lawson's work on the Bott map in deformation $K$--theory, together with our excision result for connected sum decompositions (Theorem~\ref{excision-thm}).

We begin by noting that for the case of the torus $M^1 = S^1\cross S^1$, Theorem~\ref{moduli} is elementary.  Here $\pi_1 (S^1\cross S^1) \isom \bbZ\cross \bbZ$, and there is actually a homeomorphism
$$\Hom\left(\bbZ \cross \bbZ, U(n)\right)/U(n) \stackrel{\homeo}{\maps} \Sym^n (S^1\cross S^1).$$
Each representation $\rho\co\bbZ\cross \bbZ\to U(n)$ corresponds to a pair $(A, B)$ of commuting unitary matrices.  Any such pair $(A,B)$ is simultaneously diagonalizable (by a unitary matrix), so there exists an orthonormal basis $e_1, \ldots, e_n \in \bbC^n$ such that each $e_i$ is an eigenvector of both $A$ and $B$.  Letting $\alpha_i$ and $\beta_i$ denote the eigenvalues of $A$ and $B$ (respectively) corresponding to the eigenvector $e_i$, the assignment
$$ (A, B) \mapsto [(\alpha_1, \beta_1), (\alpha_2, \beta_2), \ldots, (\alpha_n, \beta_n)] \in \Sym^n (S^1\cross S^1)$$
yields the desired homeomorphism (continuity is an exercise using Rouch\'{e}'s theorem, and since the left-hand side is compact and the right-hand side is Hausdorff, the map is in fact a homeomorphism).

\vspace{.25in}

In order to prove Theorem~\ref{moduli} in general, we need to discuss the Bott map in deformation $K$-theory.  First, recall that in~\cite{Lawson-prod}, Lawson constructed natural $E_\infty$--ring structures on the spectra $\K(-)$ arising from tensor products of representations.  Moreover, he produced a functor $\K_r$ from groups to ring objects in symmetric spectra together with a natural weak equivalence between 
$\K_r$ and the symmetric spectrum associated to $\K$.  (To be precise, Lawson's model for $\K$ is different from the one used in Ramras~\cite{Ramras-excision}.  For the comparison between these two models, see Ramras~\cite[Section 2]{Ramras-surface}.  Lawson's model arises from a $\Gamma$--space in the sense of Segal, and hence has an associated symmetric spectrum, as explained in Mandell--May--Schwede--Shipley~\cite{diagram-spectra}.)  We may then replace $\K_r (-)$ by a commutative $\bS$--algebra in the sense of Elmendorf--Kriz--Mandell--May~\cite{EKMM}, using Schwede's comparison functor~\cite{Schwede-comparison} from symmetric spectra to $\bS$--algebras.  Note that this functor is \emph{strongly} symmetric monoidal, and hence preserves the multiplicative structures on our spectra.  We will continue to denote the resulting $\bS$--algebra by $\K_r (-)$, or simply $\K(-)$.
(For a brief discussion of some of the basic features of $\bS$--modules and $\bS$--algebras, see Section~\ref{Atiyah-Segal}).  Since $\K(\{1\})$ is weakly equivalent to the ordinary connective $K$--theory spectrum $\ku$, the projection $\Gamma \to \{1\}$ makes the $\bS$--algebra $\K_r(\Gamma)$ into a $\ku$--algebra (or more precisely, an algebra over the rigidified $\bS$--algebra $\ku_r$.  We note that the passage between these various models for $\K$ preserves homotopy groups (i.e. maps $\bS^n \to \K(-)$ in the homotopy category, where $\bS^n$ is the $n^{\textrm{th}}$ suspension of the sphere spectrum $\bS^0$), since each of the functors between these various categories of spectra is a Quillen equivalence.

The Bott element $\beta \in \pi_2 \ku \isom \bbZ$ can be considered as a map
$$\beta\co \bS^2 \maps \ku,$$
where $\bS^2$ denotes the second suspension of the sphere spectrum $\bS = \bS^0$.
For any group $\Gamma$, this element corresponds to a class in $\pi_2 \K(\Gamma)$ via the natural map
$$\ku \heq \K(\{1\}) \maps \K(\Gamma).$$
Multiplication by this element gives us the Bott map $\beta$ in deformation $K$--theory.  More precisely,
$\beta = \beta_\Gamma$ is defined as the composite
$$\Susp^2 \K(\Gamma) = \bS^2 \sm \K(\Gamma) \stackrel{\beta\wedge \textrm{Id}}{\maps} \ku \wedge \K(\Gamma) \srm{\mu} \K(\Gamma),$$
where $\mu$ is the structure map of the $\ku$--module $\K(\Gamma)$.
It follows from the definitions that the Bott map is a natural transformation of functors from groups to $\ku$--modules.  

The following two results establish a close relationship between the Bott map and the stable moduli space $\Mf$.  

\begin{theorem}[Lawson \cite{Lawson-simul}, Corollary 4]$\label{Lawson-cofiber}$
For any finitely generated discrete group $\Gamma$, the homotopy cofiber of the Bott map $\beta_\Gamma\co \Susp^2 \K(\Gamma) \to \K(\Gamma)$ is weakly equivalent to the spectrum $\Rdef (\Gamma)$ associated to the topological abelian monoid
$$\coprod_{n=0}^\infty \Hom\left(\Gamma, U(n)\right)/U(n).$$
Hence there is a long exact sequence in homotopy
\begin{equation}\label{Lawson-seq}
\cdots \stackrel{\partial}{\maps} \K_*(\Gamma)  \stackrel{\beta_*}{\maps} \K_{*+2} \maps \Rdef_{*+2} (\Gamma)  \stackrel{\partial}{\maps} \K_{*-1} (\Gamma) \maps \cdots.
\end{equation}
\end{theorem}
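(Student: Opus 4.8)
The plan is to construct a natural map of connective spectra $q\co\K(\Gamma)\maps\Rdef(\Gamma)$, show that it annihilates the Bott class, and then identify the induced map out of the cofiber as a weak equivalence. To build $q$, observe that for each $n$ the orbit map $EU(n)\times_{U(n)}\Hom(\Gamma,U(n))\maps\Hom(\Gamma,U(n))/U(n)$ simply forgets the underlying bundle; these maps are strictly compatible with block sum and with the stabilization maps, so they assemble into a map of topological monoids, and group completion produces $q$. The target monoid $\coprodmo_n\Hom(\Gamma,U(n))/U(n)$ is \emph{strictly} commutative (block sum of representations is commutative up to conjugation, hence equal after quotienting by $U(n)$), so its group completion is a topological abelian group, i.e. a product of Eilenberg--MacLane spaces; thus $\Rdef(\Gamma)$ is an $H\bbZ$--module spectrum, and in particular $\beta$ acts as zero on it. Since $q$ is a map of $\ku$--modules (the $\ku$--action on $\Rdef(\Gamma)$ being ``multiply ranks, then adjoin trivial representations'', which factors through the truncation $\ku\to H\bbZ\heq\Rdef(\{1\})$), the composite $q\circ\beta_\Gamma$ is null, and $q$ factors through a map $\bar q\co\K(\Gamma)/\beta_\Gamma\maps\Rdef(\Gamma)$ out of the homotopy cofiber. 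Granting that $\bar q$ is a weak equivalence, the long exact sequence (\ref{Lawson-seq}) is just the homotopy long exact sequence of the cofiber sequence $\Susp^2\K(\Gamma)\xmaps{\beta_\Gamma}\K(\Gamma)\to\K(\Gamma)/\beta_\Gamma$.

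It remains to show that $\bar q$ is an equivalence, and this is the step I expect to be the main obstacle. Because both $\K(\Gamma)/\beta_\Gamma$ and $\Rdef(\Gamma)$ are $H\bbZ$--module spectra, it suffices to check that $\bar q$ is an isomorphism on all homotopy groups. On $\pi_0$ this is immediate: both sides compute the group completion of $\pi_0\bigl(\coprodmo_n\Hom(\Gamma,U(n))/U(n)\bigr)$, using that $U(n)$ is connected so that $\pi_0(\Hom(\Gamma,U(n))/U(n))\isom\pi_0(\Hom(\Gamma,U(n))_{hU(n)})$, together with the connectivity of $\K(\Gamma)$ to identify $\pi_0(\K(\Gamma)/\beta_\Gamma)$ with $\K_0(\Gamma)$. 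In higher degrees, the long exact sequence of the cofiber sequence expresses $\pi_n(\K(\Gamma)/\beta_\Gamma)$ in terms of the Bott map on $\K_*(\Gamma)$, and one must match this against $\Rdef_n(\Gamma)$. I would attack this through the stratification of $\Hom(\Gamma,U(n))$ by the conjugacy class of the stabilizer --- equivalently, by the isotypic type $\bigoplus_i\rho_i\otimes\bbC^{n_i}$ of the representation: over each stratum of the quotient the orbit map $\Hom(\Gamma,U(n))_{hU(n)}\to\Hom(\Gamma,U(n))/U(n)$ restricts to a genuine fibration with fiber the classifying space $B\bigl(\prod_i U(n_i)\bigr)$ of the stabilizer, and one reassembles the strata while tracking the attaching maps. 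The point is that the ``bundle--theoretic'' contribution $\prod_i BU(n_i)$ at each stratum is exactly what gets annihilated upon killing $\beta$, with $\ku/\beta\heq H\bbZ$ (i.e. that the group completion of $\coprodmo_m BU(m)$ modulo $\beta$ is $H\bbZ$) serving as the local model.

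The genuinely delicate issues --- controlling this stratification compatibly with stabilization in $n$ and with block sum, handling the possibly non--locally--finite stratification of an arbitrary representation variety, and pinning the connecting map to $\beta$ on the nose rather than merely up to a unit --- are what make this a substantial theorem rather than a formal exercise. This is the content of Lawson's argument in~\cite{Lawson-simul}, which we take as given.
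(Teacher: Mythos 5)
This theorem is imported verbatim from Lawson's paper (his Corollary 4), and the present paper offers no proof of it, so there is no argument here to compare yours against. Your formal reduction --- building $q$ from the orbit maps $\Hom(\Gamma,U(n))_{hU(n)}\to\Hom(\Gamma,U(n))/U(n)$, noting that $\Rdef(\Gamma)$ is a generalized Eilenberg--MacLane ($H\bbZ$--module) spectrum on which $\beta$ necessarily acts as zero, and factoring $q$ through the homotopy cofiber --- is sound and consistent with Lawson's strategy, and you correctly isolate the identification of $\bar q$ as a weak equivalence as the substantive content, which you (like the paper itself) defer to Lawson.
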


The author employed this theorem in~\cite[Section 6]{Ramras-surface} to calculate $\Rdef_* (\pi_1 S)$ in low degrees ($S$ an aspherical surface), using computations of deformation $K$--theory.
Combining the group completion theorem and facts about the connected components of the representation spaces yields the following result.

\begin{lemma}[Ramras, \cite{Ramras-surface} Section 6] $\label{EM}$ For any aspherical surface $S$, the zeroth space of the spectrum $\Rdef(\pi_1 S)$ is weakly equivalent to $\bbZ \cross \Mf(S)$.
In particular, $\Mf(S)$ is weakly equivalent to a product of Eilenberg--MacLane spaces.
\end{lemma}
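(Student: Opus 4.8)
The plan is to realize $\Rdef(\pi_1 S)$ concretely as a group completion, read off its zeroth space via the group completion theorem, and then upgrade a homology equivalence to a weak equivalence using the fact that everything in sight is an $H$--space. By construction $\Rdef(\pi_1 S)$ is the spectrum associated to the topological \emph{abelian} monoid
$$M = \coprodmo_{n=0}^\infty \Hom(\pi_1 S, U(n))/U(n)$$
under block sum (which is strictly commutative here, since permutation matrices lie in $U(n)$), so its zeroth space is the group completion $\Omega BM$. There is a natural map $\bbZ\cross\Mf(S)\to\Omega BM$: since $\oplus 1\co M\to M$ becomes invertible in $\Omega BM$, the mapping telescope of $M\xmaps{\oplus 1} M\xmaps{\oplus 1}\cdots$ maps to $\Omega BM$, and because the maps $\oplus 1$ are closed inclusions this telescope is identified with $\bbZ\cross\Mf(S)$ --- the $\bbZ$ records (bundle rank) minus (telescope stage), and for each fixed value of this difference one recovers $\colim_n \Hom(\pi_1 S, U(n))/U(n)=\Mf(S)$.

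Next I would invoke the McDuff--Segal group completion theorem~\cite{McDuff-Segal}, which identifies $H_*(\Omega BM)$ with the localization of $H_*(M)$ at $\pi_0 M$. Since $\Rep(\pi_1 S)$ is stably group-like with respect to the trivial representation $1\in\Hom(\pi_1 S,U(1))$ (Ramras~\cite[Section 4]{Ramras-surface}), inverting the single class $[1]$ already inverts all of $\pi_0 M$, so this localization is exactly $H_*$ of the telescope above, namely $H_*(\bbZ\cross\Mf(S))$; thus the natural map $\bbZ\cross\Mf(S)\to\Omega^\infty\Rdef(\pi_1 S)$ is a homology isomorphism. To promote this to a weak equivalence I would note that both sides are simple spaces --- the target is the zeroth space of a spectrum, hence an infinite loop space, and $\Mf(S)$ inherits from block sum the structure of a grouplike commutative topological monoid, so each of its components is a connected $H$--space and therefore simple --- and then apply the homology Whitehead theorem for simple spaces. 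It remains to match the $\bbZ$ factor with $\pi_0$ of the target, and this is where facts about connected components enter: $\Hom(\pi_1 M^g,U(n))$ is path-connected (so $\Mf(M^g)$ is connected), while for a non-orientable $\Sigma$ the space $\Hom(\pi_1\Sigma,U(n))$ has exactly two components for $n\geqs 1$, detected by the torsion class in $H^2(\Sigma;\bbZ)\isom\bbZ/2$ (Ho--Liu~\cite{Ho-Liu-ctd-comp-II}); hence $\pi_0\bigl(\Omega^\infty\Rdef(\pi_1 S)\bigr)$, which is the group completion of $\pi_0 M$, is $\bbZ$ in the orientable case and $\bbZ\oplus\bbZ/2$ in the non-orientable case, matching $\bbZ\cross\pi_0\Mf(S)$.

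For the final (``in particular'') assertion I would use Theorem~\ref{Lawson-cofiber}: $\Rdef(\pi_1 S)$ is the cofiber of the Bott map $\beta\co\Susp^2\K(\pi_1 S)\to\K(\pi_1 S)$, the spectrum $\K(\pi_1 S)$ is a $\ku$--module, and $\ku/\beta\heq H\bbZ$ (multiplication by $\beta$ is injective on $\pi_*\ku=\bbZ[\beta]$, with cokernel $\bbZ$ concentrated in degree $0$). Hence $\Rdef(\pi_1 S)\heq \K(\pi_1 S)\wedge_{\ku}H\bbZ$ is an $H\bbZ$--module spectrum, and every $H\bbZ$--module spectrum splits as a product of shifted Eilenberg--MacLane spectra (since $\bbZ$ is hereditary). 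Therefore its zeroth space $\bbZ\cross\Mf(S)$, and so $\Mf(S)$ itself, is weakly equivalent to a product of Eilenberg--MacLane spaces.

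The main obstacle is the bookkeeping in the middle step: producing an honest natural map $\bbZ\cross\Mf(S)\to\Omega^\infty\Rdef(\pi_1 S)$, verifying via the group completion theorem that it is a homology equivalence (which is precisely where stable group-likeness with respect to $1$ is essential, so that inverting $[1]$ suffices and the relevant telescope is exactly $\colim_n\Hom(\pi_1 S,U(n))/U(n)$), and having the $H$--space/simplicity structure on $\Mf(S)$ in hand so that the homology Whitehead theorem applies. Everything else --- the computation of $\pi_0$ from the structure of $\Hom(\pi_1 S, U(n))$, and the $H\bbZ$--module argument for the Eilenberg--MacLane conclusion --- is comparatively routine.
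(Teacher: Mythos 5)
Your proposal is correct and follows essentially the same route as the paper's (cited) argument: apply the McDuff--Segal group completion theorem to the topological abelian monoid $\coprod_n \Hom(\pi_1 S, U(n))/U(n)$, identify the localizing telescope with $\bbZ\cross\Mf(S)$ using stable group-likeness with respect to the trivial character, and pin down $\pi_0$ via the Ho--Liu component computations. The only (harmless) divergence is in the final clause, where you deduce the Eilenberg--MacLane splitting from the $H\bbZ$--module structure $\Rdef(\pi_1 S)\heq \K(\pi_1 S)\sm_{\ku}(\ku/\beta)$ rather than directly from the fact that the spectrum of a topological abelian monoid splits into Eilenberg--MacLane pieces; both arguments are valid.
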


To apply our excision theorem, we need a basic fact about squares of spectra.

\begin{lemma}$\label{square-lemma}$
A square of spectra is homotopy $(l, k)$--cartesian if and only if its suspension (the square with all four spectra and all four maps suspended once) is homotopy $(l+1, k+1)$--cartesian.
\end{lemma}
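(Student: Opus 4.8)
The plan is to reduce everything to a statement about the connectivity of a single map of spectra, using the fact that the homotopy limit of a punctured square and the suspension functor interact in a controlled way. Given a square of spectra
$$\xymatrix{ X \ar[r]^\beta \ar[d]^\alpha & Z \ar[d]^\eta \\ Y \ar[r]^\gamma & W, }$$
let $H = \holim(Y \to W \leftarrow Z)$ and let $\phi \co X \to H$ be the natural map; by Definition \ref{k,l-ctd}, the square is $(l,k)$--cartesian exactly when $\phi$ is $(l,k)$--connected. Suspending the square replaces $\phi$ by $\Susp\phi \co \Susp X \to \Susp H$ \emph{provided} we know that the natural map $\Susp H \to \holim(\Susp Y \to \Susp W \leftarrow \Susp Z)$ is a weak equivalence. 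First I would establish this compatibility: for spectra, $\Susp(-) = \bS^1 \sm (-)$ is a left Quillen functor, but more to the point, in the stable category the homotopy pullback of a punctured square agrees with its homotopy pushout shifted by one, and both are built from finite homotopy (co)limits which commute with $\Susp$ up to natural weak equivalence. Concretely, $H$ sits in a fiber sequence $H \to Y \oplus Z \to W$ (the difference map), and smashing this fiber sequence with $\bS^1$ yields a fiber sequence exhibiting $\Susp H$ as the homotopy limit of the suspended square, since $\bS^1 \sm (-)$ preserves (co)fiber sequences of spectra.

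Second, I would invoke the elementary fact that a map $f \co A \to B$ of spectra is $(l,k)$--connected if and only if $\Susp f \co \Susp A \to \Susp B$ is $(l+1,k+1)$--connected. This is immediate from the suspension isomorphism $\pi_n(\Susp A) \isom \pi_{n-1}(A)$ (which holds on the nose for spectra, not merely in a range as for spaces), under which $(\Susp f)_*$ in degree $n$ is identified with $f_*$ in degree $n-1$; isomorphism/surjectivity/injectivity in the shifted degrees translate exactly into the defining conditions of Definition \ref{k,l-ctd} with the index shifted by one. Combining this with the identification $\Susp H \heq \holim(\text{suspended square})$ from the first step, we get that $\phi$ is $(l,k)$--connected iff $\Susp\phi$ is $(l+1,k+1)$--connected iff the suspended square is $(l+1,k+1)$--cartesian, which is the claim.

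I expect the only real content to be the first step: verifying that suspension commutes with the formation of the homotopy pullback of a punctured square of spectra. For spectra this is genuinely easy because $\Susp = \bS^1 \sm (-)$ is exact (it preserves cofiber and hence fiber sequences, these agreeing up to a shift in the stable category), so there is no range restriction to worry about, unlike in the unstable setting where the Blakers--Massey theorem would be needed. The second step is purely formal bookkeeping with the definition of $(l,k)$--connectivity and the suspension isomorphism on stable homotopy groups.
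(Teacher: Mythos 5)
Your argument is correct, but it inverts the paper's route in a way worth noting. The paper handles the compatibility of the shift functor with homotopy pullbacks by passing to loop spectra: since $\Omega$ is itself a limit construction, one has a \emph{homeomorphism} $\holim(\Omega Y \to \Omega W \leftarrow \Omega Z) \isom \Omega\holim(Y \to W \leftarrow Z)$ with nothing to prove, and the lemma then follows from the natural weak equivalence $X \heq \Omega\Susp X$ applied to the suspended square. You instead work with $\Susp$ directly, which forces you to prove that suspension commutes with the homotopy pullback up to weak equivalence; your verification via the fiber sequence $H \to Y \oplus Z \to W$ and the exactness of $\bS^1 \sm (-)$ is valid (this is exactly where stability enters, as you observe), but it is the step the paper's choice of $\Omega$ is designed to sidestep. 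The one point you pass over silently is the naturality check that the resulting equivalence $\Susp H \heq \holim(\Susp Y \to \Susp W \leftarrow \Susp Z)$ carries $\Susp\phi$ to the canonical map out of $\Susp X$; this is routine, and the paper's proof requires an analogous (and equally routine) compatibility for the unit $X \to \Omega\Susp X$. The second half of your argument, shifting $(l,k)$--connectivity by one under the suspension isomorphism on stable homotopy groups, matches the paper's use of $\pi_n\Omega Z \isom \pi_{n+1}Z$ exactly. Net effect: your proof is a bit longer where the paper is slick, but it is self-contained and makes explicit why stability (rather than a Blakers--Massey range) is what makes the lemma true with no dimension restriction.
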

\begin{proof} Using the homeomorphism $\holim ( \Omega Z \maps \Omega W \lmaps \Omega Y) \isom \Omega \holim (Z \maps W \lmaps Y)$, one sees that a square is homotopy $(l,k)$--cartesian if and only if its associated square of loop spectra is homotopy $(l-1, k-1)$--cartesian.  The proof is completed by considering the natural weak equivalence $X \stackrel{\heq}{\maps} \Omega \Susp X$.
\end{proof}

The following result determines the Bott map for free groups.

\begin{proposition}[Lawson \cite{Lawson-simul}, Section 5]$\label{free-Bott}$ For any free group $F_k$, there is a weak equivalence of $\ku$--modules
$$\K(F_k) \heq \ku \vee \bigvee_{k} \Sigma \ku,$$
and in particular the Bott map induces isomorphisms
$$\beta \co \K_i (F_k) \isom \K_{i+2} F_k$$
for all $i\geqs 0$.
\end{proposition}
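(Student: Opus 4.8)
The second assertion is a formal consequence of the first. The Bott map $\beta_{F_k}$ is $\ku$-linear --- it is multiplication by $\beta\in\pi_2\ku$ --- and natural in the group, so any $\ku$-module equivalence $\K(F_k)\heq\ku\vee\bigvee_k\Susp\ku$ carries it to the wedge of the Bott self-maps of the summands. On $\pi_*\ku$ multiplication by $\beta$ is the usual isomorphism $\pi_i\ku\isom\pi_{i+2}\ku$ for $i\geqs 0$, and the corresponding statement on $\Susp\ku$ (where $\pi_i\Susp\ku=\pi_{i-1}\ku$) is immediate; summing over the summands gives $\beta\co\K_i(F_k)\isom\K_{i+2}(F_k)$ for all $i\geqs 0$. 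So the plan is to produce the $\ku$-module splitting of $\K(F_k)$.

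First I would identify the zeroth space. Since $\Hom(F_k,U(n))\homeo U(n)^k$ with $U(n)$ acting by simultaneous conjugation, there is a homotopy equivalence
$$\Hom(F_k,U(n))_{hU(n)}=EU(n)\cross_{U(n)}U(n)^k\;\heq\;\Map\Bigl(\bv_k S^1,BU(n)\Bigr),$$
natural in $n$ and compatible with block sum; this is the iterated form of the standard identification of the free loop space $LBU(n)$ with $EU(n)\cross_{U(n)}U(n)^{\mathrm{ad}}$, the right-hand side being the $k$-fold homotopy fibre product over $BU(n)$ of copies of $LBU(n)$. Since $\Rep(F_k)$ is stably group-like with respect to the trivial representation, Proposition~\ref{gp-completion} identifies the zeroth space of $\K(F_k)$ with $\mathbb Z\cross\Map(\bv_k S^1,BU)$, and because $BU$ is an infinite loop space, evaluation at the wedge point splits and $\Map(\bv_k S^1,BU)\heq BU\cross U^k$. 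As an infinite loop space this already agrees with $\Omega^\infty\bigl(\ku\vee\bigvee_k\Susp\ku\bigr)=\mathbb Z\cross BU\cross U^k$.

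The remaining --- and, I expect, the only genuinely delicate --- step is to promote this to a weak equivalence of $\ku$-module \emph{spectra}. The projection $F_k\to\{1\}$ and the inclusion $\{1\}\injects F_k$ make $\ku\heq\K(\{1\})$ a $\ku$-algebra retract of $\K(F_k)$, so $\K(F_k)\heq\ku\vee\wt\K(F_k)$ as $\ku$-modules with $\Omega^\infty\wt\K(F_k)\heq U^k$, and it remains to exhibit a $\ku$-module equivalence $\wt\K(F_k)\heq\bigvee_k\Susp\ku$. For this I would pass to Lawson's $\Gamma$-space (equivalently, $\bS$-algebra) model for $\K(-)$ from~\cite{Lawson-prod}, in which the evaluation splitting of the (iterated) free loop space can be arranged to respect the block-sum and tensor structures: the space-level splitting is elementary, but the spectrum-level, $\ku$-\emph{linear} one is exactly what needs the structured multiplicative model. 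An essentially equivalent route, closer to the methods of this paper, is inductive: writing $F_k=F_{k-1}*_{\{1\}}\mathbb Z$, the comparison map $\phi_n$ of Proposition~\ref{reduction-prop} is the identity $U(n)^k=U(n)^{k-1}\cross U(n)$ and hence $(0,\infty)$-connected, so $\K(F_k)\heq\holim\bigl(\K(F_{k-1})\maps\ku\lmaps\K(\mathbb Z)\bigr)$ as $\ku$-modules; both legs of this cospan are split by the unit $\ku\maps\K(-)$, so the homotopy pullback is $\ku\vee\wt\K(F_{k-1})\vee\wt\K(\mathbb Z)$, and an induction reduces everything to Lawson's base case $\K(\mathbb Z)\heq\ku\vee\Susp\ku$ --- proved by the same free-loop-space computation, so that the one obstacle reappears there.
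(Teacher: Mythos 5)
Your proposal is correct, and your main line of argument is precisely the first of Lawson's two proofs that the paper points to: the identification $\Hom(F_k,U(n))_{hU(n)}\heq \Map(\bigvee_k S^1, BU(n))$ as the $k$--fold fibre product of $LBU(n)\heq U(n)^{\mathrm{Ad}}_{hU(n)}$ over $BU(n)$, followed by group completion. You have also correctly isolated the one genuinely delicate point, namely upgrading the space-level splitting $\Map(\bigvee_k S^1,BU)\heq BU\cross U^k$ to a $\ku$--linear equivalence of spectra. It is worth noting that the paper's preferred route around that point is different from the one you sketch: rather than rigidifying the evaluation splitting inside Lawson's $\Gamma$--space model, one first establishes that the Bott map is an isomorphism on $\K_*(F_k)$ for $*\geqs 0$ \emph{independently} of the wedge decomposition (via Theorem~\ref{Lawson-cofiber} together with Lawson's direct computation of $\Rdef(F_k)$, which vanishes above degree one), and then uses Lemma~\ref{rep} to build a $\ku$--module map $\ku\vee\bigvee_k\Susp\ku\to\K(F_k)$ out of chosen generators of $\pi_0$ and $\pi_1$; Bott periodicity on both sides then forces this map to be a weak equivalence, exactly as in the proof of Theorem~\ref{A-S}. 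This inverts the logical order of the two assertions in the proposition (the Bott isomorphism is an input, not a consequence) but requires no structured model for the free-loop-space splitting. Your inductive alternative via $F_k=F_{k-1}*\mathbb{Z}$ and free-product excision is also sound and very much in the spirit of this paper, but, as you say, it only localizes the difficulty to $k=1$, where one of the two mechanisms above is still needed.
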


Lawson in fact gives two proofs of this result.  The first approach makes use of the weak equivalence
$LBU(n) \heq U(n)^{\textrm{Ad}}_{hU(n)} = \Hom\left(\bbZ, U(n)\right)_{hU(n)}$
between the free loop space of $BU(n)$ and the homotopy orbit space of $U(n)$ under conjugation.  Another (less direct) route is to apply Theorem~\ref{Lawson-cofiber}.  Lawson has provided a direct computation of the spectrum $\Rdef (F_k)$ appearing in that theorem~\cite[Section 6]{Lawson-simul},  
and from this one may deduce injectivity of $\beta$ and compute the homotopy groups $\K_*(F_k)$.  The $\ku$--module structure of $\K(F_k)$ then follows as in the proof of Theorem~\ref{A-S}.

\begin{proposition}$\label{Bott-inj}$
For any Riemann surface $M^g$ with $g>0$, the Bott map
$$\beta_*\co  \K_{*} (\pi_1 M^g) \maps \K_{*+2} (\pi_1 M^g)$$
is an isomorphism for $*>0$ and an injection for $* = 0$.
Similarly, if $\Sigma$ is an aspherical, non-orientable surface, then
$$\beta_*\co  \K_{*} (\pi_1 \Sigma) \maps \K_{*+2} (\pi_1 \Sigma)$$
is an isomorphism for all $*\geqs 0$.
\end{proposition}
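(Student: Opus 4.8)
The plan is to handle the torus case $g=1$ by elementary means and to reduce every other case (orientable of genus $\geqs 2$, and every aspherical non-orientable surface) to the excision theorem, these being exactly the surfaces that decompose as a connected sum of surfaces other than the sphere. For $M^1=S^1\cross S^1$ I would use the homeomorphisms $\Hom(\bbZ^2,U(n))/U(n)\homeo\Sym^n(S^1\cross S^1)$ recalled above, which are compatible with stabilization and hence give $\Mf(M^1)\homeo\Sym^\infty(S^1\cross S^1)$. By Lemma~\ref{EM} the zeroth space of $\Rdef(\bbZ^2)$ is then $\bbZ\cross\Sym^\infty(S^1\cross S^1)$, whose homotopy vanishes above degree $2$ by the Dold--Thom theorem, so $\Rdef_j(\bbZ^2)=0$ for $j\geqs 3$. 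Plugging this into the long exact sequence~(\ref{Lawson-seq}) of Theorem~\ref{Lawson-cofiber}, the segment $\Rdef_{*+3}\xmaps{\partial}\K_*(\bbZ^2)\xmaps{\beta_*}\K_{*+2}(\bbZ^2)\maps\Rdef_{*+2}$ gives the claim, since $\Rdef_{*+3}=0$ for $*\geqs 0$ and $\Rdef_{*+2}=0$ for $*\geqs 1$.

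For the remaining surfaces I would fix a connected sum decomposition $S=S_1\#S_2$ and the resulting amalgam $\pi_1 S=F_k*_\bbZ F_l$ of the previous section, so that by Theorem~\ref{excision-thm} the excision square with vertices $\K(\pi_1 S),\K(F_k),\K(F_l),\K(\bbZ)$ is $(1,\infty)$--cartesian when $S=M^g$ and homotopy cartesian when $S$ is non-orientable (in the latter case $\Phi$ is a $\pi_*$--isomorphism in nonnegative degrees between connective spectra, hence an equivalence). Since the Bott map is a natural transformation of functors from groups to $\ku$--modules, applying it to the four structure homomorphisms of the amalgam yields a morphism from $\Susp^2$ of the excision square to the excision square, and therefore a homotopy-commutative square
$$\xymatrix{\Susp^2\K(\pi_1 S)\ar[r]^(.52){\Susp^2\Phi}\ar[d]_{\beta} & \Susp^2\mcH\ar[d]^{\beta'}\\ \K(\pi_1 S)\ar[r]^(.52){\Phi} & \mcH,}$$
in which $\mcH=\holim\bigl(\K(F_k)\maps\K(\bbZ)\lmaps\K(F_l)\bigr)$, the functor $\Susp^2$ has been pulled inside the homotopy limit (legitimate for spectra), and $\beta'$ is induced by the Bott maps of the three free groups.

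I would then compute the connectivities of the three maps in that square other than $\beta$. By Lemma~\ref{square-lemma} (applied twice), $\Susp^2\Phi$ is $(3,\infty)$--connected if $S=M^g$ and an equivalence if $S$ is non-orientable. Since $\bbZ\isom F_1$, Proposition~\ref{free-Bott} shows each $\beta_{F_m}$ is a $\pi_*$--isomorphism in degrees $\geqs 0$, hence $(2,\infty)$--connected as a map $\Susp^2\K(F_m)\to\K(F_m)$; comparing the two Mayer--Vietoris sequences~(\ref{M-V}) through the five lemma --- using that the relevant free-group terms are isomorphic in all the degrees that arise, and that $\mcH$ is connective because on $\pi_0$ the map $\K_0(F_k)\oplus\K_0(F_l)\to\K_0(\bbZ)$ is $(a,b)\mapsto a-b$ and so onto --- shows $\beta'$ is $(2,\infty)$--connected as well. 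A diagram chase in the square then finishes the argument: for $S=M^g$, at $\pi_n$ with $n\geqs 3$ all of $\Phi$, $\Susp^2\Phi$, $\beta'$ are isomorphisms, so $\beta$ is, and at $\pi_2$ the map $\Susp^2\Phi$ is injective while $\Phi$ and $\beta'$ are isomorphisms, so $\beta$ is injective --- that is, $\beta_*\co\K_*(\pi_1 M^g)\to\K_{*+2}(\pi_1 M^g)$ is an isomorphism for $*>0$ and injective for $*=0$; for $S$ non-orientable, $\Phi$ and $\Susp^2\Phi$ are equivalences, so $\beta$ inherits the connectivity of $\beta'$ and $\beta_*$ is an isomorphism for all $*\geqs 0$.

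The construction should be routine; the delicate point will be the bookkeeping through the two Mayer--Vietoris ladders --- tracking precisely which homotopy groups are isomorphic in which degrees so that the five lemma produces exactly the range ``$*>0$'' in the orientable case and ``$*\geqs 0$'' in the non-orientable case, the difference being exactly the gap between the $(1,\infty)$-- and $(0,\infty)$--connectivity of the excision map $\Phi$ supplied by Theorem~\ref{excision-thm}.
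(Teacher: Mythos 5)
Your proof is correct and follows essentially the same route as the paper's: the torus is handled via $\Sym^\infty(S^1\cross S^1)$, Lemma~\ref{EM} and Lawson's cofiber sequence, and every other case via the excision square, Lemma~\ref{square-lemma}, the comparison of Mayer--Vietoris sequences, and Proposition~\ref{free-Bott}. The only real difference is that for injectivity of $\beta_*\co \K_0(\pi_1 M^g)\to\K_2(\pi_1 M^g)$ the paper cites \cite[Proposition 6.6]{Ramras-surface}, whereas you extract it from the same diagram (using that $\Phi$ is injective on $\pi_0$ while $\beta'$ is an isomorphism on $\pi_2$), a legitimate and slightly more self-contained alternative.
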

\begin{proof}
This result is a consequence of our excision theorem, Theorem~\ref{excision-thm} (except for the case of the torus $M^1$, which is easier and will be handled separately).  In the orientable case, any surface of genus $g>1$ may be written as a connected sum
$$M^g = M^{g-1} \# M^1.$$
The Bott map yields a commutative cube of spectra
\begin{equation}\label{Bott-cube}
\xymatrix@C=0pt{
\Susp^2 \K(\pi_1 M^g) \ar[rr] \ar[rd]^{\beta_{\pi_1 M^g}} \ar[dd] & {} & \Susp^2 \K(F_{2(g-1)}) \ar[rd]^{\beta_{F_{2(g-1)}}} \ar'[d][dd] & {}\\
{} &\K(\pi_1 M^g) \ar[rr] \ar[dd] & {} & \K(F_{2(g-1)})  \ar[dd]\\
\Susp^2 \K(\pi_1 F_2)  \ar'[r][rr] \ar[dr]^{\beta_{F_{2}}} & {} &\Susp^2 \K(\bbZ)  \ar[rd]^{\beta_{\bbZ}} & {}\\
{} &\K(\pi_1 F_2) \ar[rr] & {} & \K(\bbZ).
}
\end{equation}
The front square is homotopy $(1, \infty)$--cartesian
by Theorem~\ref{excision-thm}, so by Lemma~\ref{square-lemma} the back square is homotopy $(3, \infty)$--cartesian.  Since for any spectrum $X$ we have $\pi_* \Susp^2 (X) = \pi_{*-2} (X)$, the cube (\ref{Bott-cube}) induces a commutative diagram of Mayer--Vietoris sequences of the form
$$\xymatrix{
	\cdots \ar[r]^(.3)\partial 
	& \K_i (\pi_1 M^g) \ar[d]^{\beta_{\pi_1 M^g}} \ar[r]
	& \K_i (F_{2(g-1)}) \oplus \K_i (F_2) \ar[d]^{\beta_{F_{2(g-1)}} \oplus \beta_{F_2}}  \ar[r]
	& \K_i (\bbZ) \ar[d]^{\beta_{\bbZ}} \ar[r]^(.55)\partial  &\cdots\\
	\cdots \ar[r]^(.3)\partial & \K_{i+2} (\pi_1 M^g)\ar[r]
	& \K_{i+2}  (F_{2(g-1)}) \oplus \K_{i+2}  (F_2)\ar[r]
	& \K_{i+2}  (\bbZ) \ar[r]^(.53)\partial  &\cdots
		}
$$
(Note that this diagram exists only for $i>0$.)

By Proposition~\ref{free-Bott}, the Bott maps associated to the free groups $F_i$ induce isomorphisms in all degrees $i\geqs 0$.  Applying the 5-lemma now yields the desired result in dimensions at least 1.  Injectivity of $\beta_*\co \K_0 (\pi_1 M^g) \to \K_2 (\pi_1 M^g)$ was proven in Ramras~\cite[Proposition 6.6]{Ramras-surface}.

The case $g=1$ can be handled using Lawson's cofiber sequence (Theorem~\ref{Lawson-cofiber}  above), together with the above computation of the stable moduli space $\Mf (M^1)$ and Lemma~\ref{EM}. 

The non-orientable case is the same, but easier.  Any non-orientable surface is a connected sum of copies of $\bbRP^2$, so one obtains an amalgamation decomposition of $\pi_1 \Sigma$ to which Theorem~\ref{excision-thm} applies.  Moreover, there is no longer a failure of excision on $\pi_0$.
\end{proof}

In order to determine the \emph{homotopy type} of the stable moduli space (and not just its weak homotopy type) we need a lemma.

\begin{lemma}$\label{CW}$ The stable moduli space $\Hom(\Gamma, U)/U$ has the homotopy type of a CW complex so long as $\Gamma$ is finitely generated.
\end{lemma}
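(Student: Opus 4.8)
The plan is to realize $\Hom(\Gamma, U)/U$ as a sequential colimit of compact semialgebraic sets along closed cofibrations, and then to invoke the standard fact that such a colimit has the homotopy type of a CW complex. Write $\Gamma = \langle x_1, \dots, x_r \mid w_\alpha \rangle$. First I would observe that $\Hom(\Gamma, U(n))$ is a \emph{compact real algebraic set}: it is the common zero locus, inside the real algebraic group $U(n)^r$, of the polynomial equations $w_\alpha(U_1,\dots,U_r) = I$ coming from the relations of $\Gamma$ (inversion on $U(n)$ being polynomial). Even when there are infinitely many relations, Noetherianity of the coordinate ring of $U(n)^r$ shows this intersection is cut out by finitely many of the $w_\alpha$, so it is genuinely algebraic; this is exactly why finite generation of $\Gamma$ suffices. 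Moreover, since a homomorphism $\Gamma \to U = \colim_n U(n)$ has image in some $U(n)$, we get $\Hom(\Gamma, U)/U \homeo \colim_n \Hom(\Gamma, U(n))/U(n)$, the colimit taken along the stabilization maps $\rho \mapsto \rho \oplus 1$.

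Next I would analyze the quotient $X_n := \Hom(\Gamma, U(n))/U(n)$. The conjugation action of $U(n)$ on $U(n)^r$ is the restriction of a \emph{linear} action of $U(n)$ on the ambient space $(\mathrm{Mat}_n\bbC)^r \homeo \bbR^{2n^2 r}$. Since $U(n)$ is compact, Hilbert's finiteness theorem gives finitely many generators $f_1, \dots, f_m$ of the invariant ring $\bbR[(\mathrm{Mat}_n\bbC)^r]^{U(n)}$, and the usual Stone--Weierstrass-plus-averaging argument shows that invariant polynomials separate $U(n)$-orbits. Hence $F = (f_1, \dots, f_m)$ restricts to a continuous bijection from $X_n$ onto $F(\Hom(\Gamma, U(n))) \subseteq \bbR^m$; as $X_n$ is compact and $\bbR^m$ is Hausdorff, this is a homeomorphism. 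By the Tarski--Seidenberg theorem the image $F(\Hom(\Gamma, U(n)))$ is semialgebraic, and it is compact, so $X_n$ is a compact semialgebraic set. The stabilization map $X_n \to X_{n+1}$ is injective (if $\rho \oplus 1$ and $\rho' \oplus 1$ are conjugate, comparing multiplicities of irreducible summands forces $\rho \sim \rho'$), continuous, with compact source and Hausdorff target, hence a closed embedding; expressing it in the $F$-coordinates shows it is a semialgebraic map, so $X_n$ sits in $X_{n+1}$ as a closed semialgebraic subset.

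Finally I would feed this into triangulation theory for semialgebraic sets: each compact semialgebraic $X_{n+1}$ admits a triangulation as a finite simplicial complex in which the closed semialgebraic subset $X_n$ is a subcomplex. In particular each $X_n$ has the homotopy type of a finite CW complex and each inclusion $X_n \hookrightarrow X_{n+1}$ is a closed cofibration. Therefore $\Hom(\Gamma, U)/U \homeo \colim_n X_n$, a sequential colimit of closed cofibrations between spaces of CW homotopy type, has the homotopy type of a CW complex by the standard result on such colimits (Milnor; Fritsch--Piccinini).

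I expect the main obstacle to be the second step — confirming that the quotient $X_n$ is semialgebraic, i.e. that finitely many invariant polynomials separate the $U(n)$-orbits — together with the bookkeeping needed to see that the colimit inclusions may be taken to be cofibrations. The remaining ingredients (triangulability of compact semialgebraic sets, and CW homotopy type of a colimit along cofibrations) are entirely standard. As an alternative to the semialgebraic route one could instead argue that $\Hom(\Gamma,U(n))$ is a compact ANR and that the orbit space of a compact metric ANR under a compact Lie group action is again a compact ANR (hence of finite CW homotopy type, by West's theorem), but the semialgebraic approach is more self-contained.
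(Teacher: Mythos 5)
Your proof is correct, and its overall skeleton matches the paper's: write $\Hom(\Gamma,U)/U$ as a colimit of the compact quotients $X_n=\Hom(\Gamma,U(n))/U(n)$ along the closed embeddings induced by $\rho\mapsto\rho\oplus 1$, triangulate each $X_{n+1}$ so that $X_n$ is a subcomplex, and conclude from the standard fact that a sequential colimit along cofibrations of spaces of CW homotopy type has CW homotopy type (the paper packages this last step as the ``Ind--CW complex'' lemma of Gutzwiller--Mitchell, which is the same observation that the triangulations at successive stages need not be compatible but the cofibration property suffices). Where you genuinely diverge is in how the triangulation of the pair $(X_{n+1},X_n)$ is produced. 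The paper stays equivariant: it checks that $\Hom(\Gamma,U(n))$ is a real algebraic variety, shows by an explicit minor-determinant argument that the union $X_{n-1}$ of the $U(n)$--orbits of $\Hom(\Gamma,U(n-1))$ is a closed subvariety upstairs, and then invokes Illman's theorem on equivariant triangulations of subanalytic proper $G$--spaces to triangulate the quotient pair directly. You instead eliminate the group first: classical invariant theory (finite generation of the invariant ring for the compact group $U(n)$ acting linearly, plus separation of orbits by averaged polynomials) embeds $X_n$ as a compact subset of some $\bbR^m$, Tarski--Seidenberg shows this image and the image of $X_{n-1}$ inside it are semialgebraic, and then ordinary (non-equivariant) triangulation of compact semialgebraic sets finishes the job. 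Your route trades Illman's rather heavy equivariant theorem for the more classical package of invariant theory plus semialgebraic triangulation, and it lets you bypass the paper's minors computation, since semialgebraicity of the image of $X_{n-1}$ comes for free from Tarski--Seidenberg rather than from exhibiting it as a subvariety; the paper's route, on the other hand, avoids any appeal to orbit separation and keeps the quotient description intrinsic. Both are complete arguments.
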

\begin{proof}  We begin by recalling that there is a homeomorphism 
$$\Hom(\Gamma, U)/U \homeo \colim_{n\to \infty} \Hom(\Gamma, U(n))/U(n).$$
The maps $\Hom(\Gamma, U(n-1))/U(n-1) \to \Hom(\Gamma, U(n))/U(n)$ defining this colimit are injective, since unitary representations admit unique decompositions into irreducibles.  Since these spaces are compact Hausdorff, the inclusions are in fact homeomophisms onto their images.

We will show that for each $n$, $\Hom(\Gamma, U(n))/U(n)$ can be given a triangulation $K_n\srt{\homeo} \Hom(\Gamma, U(n))/U(n)$ (with $K_n$ a finite simplicial complex) such that some subcomplex $L_n< K_n$ maps homeomorphically onto $\Hom(\Gamma, U(n-1))/U(n-1)$.  This will \emph{not} provide a triangulation of the colimit, because although the simplicial complexes $K_n$ and $L_{n+1}$ are both homeomorphic to $\Hom(\Gamma, U(n))/U(n)$, they need not be isomorphic to one another as simplicial complexes.  However, this will provide $\Hom(\Gamma, U)/U$ with the structure of an Ind--CW complex in the sense of Gutzwiller and Mitchell~\cite[Section 3.2]{Gutzwiller-Mitchell}.  As proven in~\cite[Proposition 3.2]{Gutzwiller-Mitchell}, all such spaces have the homotopy type of CW--complexes.  Another proof of this fact is that if $f\co K\srt{\heq} X$ is a weak equivalence from a CW complex to an Ind--CW complex (e.g. $K$ could be the realization of the singular complex of $X$) then the standard proof of the Whitehead Theorem (e.g. Hatcher~\cite[Theorem 4.5]{Hatcher}) provides a homotopy inverse for $f$.  (For a simple example of an Ind--CW complex which is not a CW complex, see~\cite{Gutzwiller-Mitchell}.)

To obtain the necessary triangulations, we will use a result of Illman~\cite[Theorem B]{Illman} regarding equivariant triangulations of subanalytic sets.  Let $G$ be a Lie group.  Illman shows that if $X$ is a subanalytic proper $G$--space (e.g. $X$ is a compact algebraic variety and $G$ is compact) and $Y\subset X$ is a $G$--invariant, subanalytic closed subset of $X$ (e.g. $Y$ is a closed subvariety of $X$), then $X/G$ admits a triangulation in which $Y/G$ is a subcomplex.  Illman's results go far beyond this, but for our purposes this simplified statement will suffice.

We now choose generators $g_1, \ldots, g_k$ for $\Gamma$.  To apply Illman's result, first note that for each $n$, the representation space $\Hom(\Gamma, U(n))$ is a real algebraic variety:  
$\Hom(\Gamma, U(n))$ is cut out from the real-algebraic variety $U(n)^k$ by the relations in $G$ (note that if $(A_1, \ldots, A_k)\in U(n)^k$ and $g_{i_1}^{t_i}\cdots g_{i_m}^{t_m} =1$ is a relation in $G$, then the corresponding matrix equation $A_{i_1}^{t_i}\cdots A_{i_m}^{t_m} =I$ is a system of polynomials in the entries of the $A_i$).  Let $X_{n-1}\subset \Hom(\Gamma, U(n))$ denote the union of the $U(n)$--orbits of $\Hom(\Gamma, U(n-1))$, and note that $X_{n-1}/U(n)$ and  $\Hom(\Gamma, U(n-1))/U(n-1)$ are equal as subsets of $\Hom(\Gamma, U(n))/U(n)$.  We claim that $X_{n-1}$ is a subvariety of $\Hom(\Gamma, U(n))$.  Representations $\rho: \Gamma\to U(n)$ conjugate to $U(n-1)$--representations are exactly those for which there exists a non-zero vector $v\in \bbC^n$ such that $(\rho(g_i) - I)v = 0$ for $i =1, \ldots, k$.  The required vector $v$ is thus a solution to a system of $nk$ linear equations in $n$ variables, and this system has a solution precisely when its coefficient matrix has rank less than $n$.  This occurs if and only if all of the $n\cross n$ minor determinants of the coefficient matrix are zero.  Each of these determinants is a polynomial in the entries of the matrices $\rho(g_i)$, and these ${nk\choose n}$ polynomials cut out $X_{n-1}$ from $\Hom(\Gamma, U(n))$.
Illman's theorem now shows that $\Hom(G, U(n))/U(n)$ admits a triangulation in which $X_{n-1}/U(n) = \Hom(G, U(n-1))/U(n-1)$ is a subcomplex, completing the proof.
\end{proof}

We can now prove our main result.

\vspace{.15in}

\noindent {\bf Proof of Theorem~\ref{moduli}.}  First we consider the orientable case.  The infinite symmetric product $\Sym^\infty (M^g)$ is an abelian topological monoid, hence weakly equivalent to a product of Eilenberg--MacLane spaces. By Lemma~\ref{EM}, the same is true of $\Mf (M^g)$.  As explained in Hatcher~\cite[Chapter 4K]{Hatcher}, the infinite symmetric product of a simplicial complex is a CW complex, and by Lemma~\ref{CW} we know that $\Mf(M^g)$ has the homotopy type of a CW complex.  Hence it will suffice to compute the homotopy groups  $\pi_* (\Mf(M^g))$ and show that they agree with $\pi_* \Sym^\infty (M^g)$.  By the Dold--Thom Theorem, the latter groups are precisely the reduced integral homology groups of $M^g$. For $i=1, 2$, the groups $\pi_i \Rdef (M^g)$ were computed in Ramras~\cite[Section 6]{Ramras-surface} and found to agree with the integral homology groups of $M^g$; these also agree with $\pi_* (\Mf(M^g))$ by Lemma~\ref{EM}.  

To complete the proof, we must show that for $*>2$ the groups $\pi_* (\Mf(M^g))\isom \Rdef_* (\pi_1 M^g)$ are zero.  This follows immediately from the fact that the Bott map is an isomorphism above degree zero (Proposition~\ref{Bott-inj}) together with Lawson's cofiber sequence (\ref{Lawson-seq}).
Since both $\Mf(M^g))$ and $ \Sym^\infty (M^g)$ are connected, the proof is complete.

For non-orientable surfaces $\Sigma$, the argument is essentially the same.  The groups $\pi_0 \Mf (\Sigma)$  and $\pi_1 \Mf (\Sigma)$ were computed in Ramras~\cite[Section 6]{Ramras-surface}.  Again (\ref{Lawson-seq}) and Proposition~\ref{Bott-inj} show that the higher homotopy of $\Rdef (\Sigma)$ is trivial.  $\hfill \Box$

\vspace{.25in}
The cohomology of the stable moduli space can be computed from Theorem~\ref{moduli}, and in particular it is torsion-free in both the orientable and the non-orientable cases.  Note that in the orientable case, $\bbC P^\infty$ is a model for the Eilenberg--MacLane space $K(\bbZ, 2)$, so $\Mf (M^g) \heq \Sym^\infty M^g \heq (S^1)^{2g} \cross \bbC P^\infty$,
and the cohomology now follows from the K\"{u}nneth Theorem.

\subsection{Explicit descriptions for non-orientable surface groups}$\label{explicit}$

In the case of a non-orientable surface, we can give several explicit descriptions of homotopy equivalences $\Mf( \Sigma) \stackrel{\heq}{\maps} T^k \coprod T^k.$

First, note that the inclusion 
$$i \co T^k \coprod T^k \isom \Hom\left(\pi_1 \Sigma, U(1)\right)/U(1)\injects \Hom(\pi_1 \Sigma, U)/U\
\homeo \Mf (\Sigma)$$
of the group of multiplicative characters
is split by the determinant map.  Hence on $\pi_1$, the determinant induces a surjection between finitely generated free abelian groups of the same rank, and must therefore be an isomorphism.  Since neither space has homotopy in higher dimensions, $\det$ is a homotopy equivalence and so is $i$.  

Following Lawson~\cite{Lawson-simul}, we will now consider another description of this equivalence in terms of eigenvalues.
Recall that by the Dold-Thom theorem, there is a weak equivalence $S^1\srt{\heq} \Sym^\infty (S^1)$, and hence $T^k \heq (\Sym^\infty S^1)^k$.  As noted above, the infinite symmetric product of a simplicial complex is a CW complex, so this weak equivalence is a homotopy equivalence.

We will write our surface $\Sigma$ in the form $\Sigma^g_i = M^g\# C_i$ ($g\geqs 1$), where $C_0 = \bbRP^2$ and $C_1$ is the Klein bottle (the case $g = 0$, i.e. $\Sigma = C_1$, is slightly different).  This gives an explicit choice of injective amalgamation diagram
\begin{equation}\label{amalg}
\xymatrix{ \bbZ \ar[r]^(.42){\eta_i} \ar[d]^m & \pi_1(C_i') \ar[d]^{\mu_i} \\
		F_{2g} \ar[r]^(.45){r} & \pi_1 \Sigma^g_i.
		}
\end{equation}
The map $m$ is the multiple commutator map (sending $1\in \bbZ$ to $\prod_1^g [a_i, b_i] \in F_{2g}$, where the $a_i$ and $b_i$ are free generators for $F_{2g}$), and the space $C'_i$ is $C_i$ minus a disk.  Hence $\pi_1 (C_0') \isom \bbZ$ and $\pi_1 (C_1') = F_2$, and the maps $\eta_i$ send
$1\in \bbZ$ to $d^2\in \pi_1 (C_0') = \langle d \rangle$ and $cdc^{-1}d\in \pi_1 (C'_1) = \langle c,d \rangle$ (respectively).  Diagram (\ref{amalg}) now yields maps
$$F_{2g+i} = F_{2g} * F_i \srm{r * \mu_i'} \pi_1 \Sigma^g_i$$
where $\mu_1'$ is the restriction of $\mu_1$ to the generator $c$, and $\mu'_0$ is the trivial map out of the trivial group.  (When $\Sigma = C_1$, we instead use the map $\bbZ = F_1 \to \langle c, d \, |\, cdc^{-1}d \rangle = \pi_1 C_1$ sending $1$ to $c$.)  

Lawson defines the stable eigenvalue map
$$\colim_n \Hom\left(\bbZ, U(n)\right)/U(n) \maps \Sym^\infty (S^1)$$
to be the map sending $\rho\co\bbZ \to U(n)$ to (unordered) collection of eigenvalues of $\rho(1)$.  Applying this map to each generator of the free group $F_k$ yields the stable eigenvalue map
$$\colim_n \Hom\left(F_k, U(n)\right)/U(n) \maps (\Sym^\infty (S^1))^k.$$
Finally, pulling back along the map $F_{2g+i} \srm{r*\mu'_i} \pi_1 \Sigma^g_i$ yields the stable eigenvalue map
$$\Mf (\Sigma^g_i) = \colim_n \Hom\left(\pi_1 \Sigma^g_i,  U(n)\right)/U(n) \maps (\Sym^\infty (S^1))^{2g+i} \heq T^{2g+i}.$$
This map can be extended to a map
$$E \co \Mf(\Sigma^g_i) \maps \bbZ/2\bbZ \cross  (\Sym^\infty (S^1))^{2g+i},$$
whose first coordinate sends a representation $\rho$ to Ho and Liu's obstruction class $o(\rho)\in \bbZ/2\bbZ$~\cite{Ho-Liu-ctd-comp-I, Ho-Liu-ctd-comp-II}.  (We note that $o(\rho)$ may be thought of as the first Chern class of the bundle $E_\rho$ from Section~\ref{universal}, which lies in 
$H^2(\Sigma^g_i; \bbZ) \isom \bbZ/2)$).

\begin{corollary}$\label{ev}$
Let $\Sigma^g_i$ be an aspherical, non-orientable surface.  Then the stable eigenvalue map
$$E \co \Mf(\Sigma^g_i) \maps \bbZ/2\bbZ \cross  (\Sym^\infty (S^1))^{2g+i}$$
is a homotopy equivalence.
\end{corollary}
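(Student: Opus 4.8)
The plan is to reduce the statement to the description of $\Mf(\Sigma^g_i)$ by multiplicative characters. Recall from the beginning of Section~\ref{explicit} that the inclusion $\iota\co \Hom(\pi_1 \Sigma^g_i, U(1))/U(1) \injects \Mf(\Sigma^g_i)$ is a homotopy equivalence; I would show that $E\circ\iota$ is a homotopy equivalence as well, from which it follows that $E$ is one (compose $E\circ\iota$ with a homotopy inverse of $\iota$).

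To understand $E\circ\iota$, note that since $U(1)$ is abelian, $\Hom(\pi_1\Sigma^g_i, U(1))/U(1) = \Hom(H_1(\Sigma^g_i;\bbZ), U(1))$. Tracing the amalgamation diagram~(\ref{amalg}), the $2g+i$ free generators of $F_{2g+i}$ are carried to the classes $x_1, \dots, x_{2g+i}$ of $a_1, b_1, \dots, a_g, b_g$ (together with $c$, when $i = 1$) in $\pi_1 \Sigma^g_i$, and abelianizing the surface relation gives $H_1(\Sigma^g_i;\bbZ) \isom \bbZ^{2g+i} \oplus \bbZ/2\bbZ$, with free part spanned by $x_1, \dots, x_{2g+i}$ and torsion part by the class $y$ of $d$. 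So a character $\chi$ is exactly the data of $(\chi(x_1), \dots, \chi(x_{2g+i})) \in (S^1)^{2g+i}$ and a sign $\chi(y) \in \{\pm 1\}$. Because $\chi$ is one-dimensional, the only eigenvalue of $\chi(x_j)$ is $\chi(x_j)$ itself, viewed in $\Sym^1 S^1 \subset \Sym^\infty S^1$, so the stable-eigenvalue coordinates of $E(\chi)$ are $\chi(x_1), \dots, \chi(x_{2g+i})$. For the remaining coordinate, a standard computation identifies the obstruction class $o(\chi) = c_1(E_\chi) \in H^2(\Sigma^g_i;\bbZ) \isom \bbZ/2\bbZ$ with the image of $\chi$ under the connecting homomorphism $H^1(\Sigma^g_i; U(1)) \to H^2(\Sigma^g_i;\bbZ)$ of the (constant-sheaf) exponential sequence $\bbZ \to \bbR \to U(1)$; as $H^1(\Sigma^g_i;\bbR) = \bbR^{2g+i}$ surjects onto the identity component of $H^1(\Sigma^g_i;U(1))$ and $H^2(\Sigma^g_i;\bbR) = 0$, this connecting map is exactly the projection $\Hom(H_1(\Sigma^g_i;\bbZ), U(1)) \to \Hom(\bbZ/2\bbZ, U(1)) \isom \bbZ/2\bbZ$, i.e. it records the sign $\chi(y)$. (This matches Ho--Liu's enumeration of the components of $\Hom(\pi_1\Sigma^g_i, U(n))/U(n)$ and the equality $\pi_0 \Mf(\Sigma^g_i) \isom \bbZ/2\bbZ$.) Putting the coordinates together, $E\circ\iota$ is the canonical identification $\Hom(H_1(\Sigma^g_i;\bbZ), U(1)) \isom (S^1)^{2g+i} \times \bbZ/2\bbZ$, followed by the swap of the two factors and the inclusion $(S^1)^{2g+i} = (\Sym^1 S^1)^{2g+i} \injects (\Sym^\infty S^1)^{2g+i}$. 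The first map is a continuous bijection from a compact space to a Hausdorff space, hence a homeomorphism, while the last is a homotopy equivalence by the Dold--Thom theorem (the symmetric products occurring here are CW complexes, as recalled in Section~\ref{explicit}). Therefore $E\circ\iota$, and with it $E$, is a homotopy equivalence.

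The exceptional case $g = 0$, $\Sigma = C_1$ (the Klein bottle), is handled in exactly the same way, using the amalgamation of $\bbZ = F_1$ into $\pi_1 C_1 = \langle c, d \mid cdc^{-1}d\rangle$ that sends $1 \mapsto c$: here ``$2g+i$'' reads as $1$, $H_1(C_1;\bbZ) \isom \bbZ\langle c\rangle \oplus \bbZ/2\bbZ\langle d\rangle$, and the bookkeeping above goes through verbatim. I expect the one genuinely delicate point to be the identification of the $\bbZ/2\bbZ$ coordinate of $E$ on characters with the sign $\chi(y)$ on the torsion summand of $H_1$ --- that is, pinning down the behaviour of the obstruction class $o$ on one-dimensional representations; once the free generators of $F_{2g+i}$ are traced through~(\ref{amalg}), the eigenvalue coordinates are immediate and the remainder is formal.
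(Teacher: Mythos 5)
Your proposal is correct and follows essentially the same route as the paper: reduce to the character group $\Hom(\pi_1\Sigma^g_i,U(1))$ via the inclusion being a homotopy equivalence, identify the composite with the map $(S^1)^{2g+i}\times\bbZ/2\bbZ\to\bbZ/2\bbZ\times(\Sym^\infty S^1)^{2g+i}$ given by the obstruction class together with the standard inclusion $\eta\co S^1\injects\Sym^\infty(S^1)$, and conclude since $\eta$ is a homotopy equivalence. The only difference is that where the paper simply cites Ho--Liu for the $\pi_0$ statement, you derive the identification $o(\chi)=\chi(y)$ directly from the exponential sequence, which is a correct and somewhat more self-contained treatment of the same point.
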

\begin{proof}  The map on $\pi_0$ induced by $E$ is an equivalence by the results of Ho and Liu.
We have observed already that the inclusion $i\co\Mf(\Sigma^g_i, U(1)) \injects \Mf(\Sigma^g_i)$ is an equivalence, and the composite $E\circ i$ just sends $\rho$ to $(o(\rho), j(\rho))$
where the map $j\co(S^1)^{2g+i} \to  \left( \Sym^{\infty} (S^1)\right)^{2g+i}$ 
is the $(2g+i)$--fold product of the standard inclusion $\eta\co S^1 \injects \Sym^{\infty} (S^1)$.  Multiplication in $S^1$ gives a splitting of $\eta$, so $\eta$ induces an isomorphism on $\pi_1$ and is therefore a homotopy equivalence; it now follows that $j$, $E\circ i$, and $E$ are homotopy equivalences. 
\end{proof}


\section{The $\ku$--module structure of deformation $K$--theory}$\label{ku}$

In this section, we study deformation $K$--theory of surface groups and their products as modules over the connective $K$--theory spectrum $\ku$.  This leads to a refinement of the Atiyah--Segal theorem for surfaces (Ramras~\cite{Ramras-surface}) and to calculations of the stable moduli space $\Mf$ for products of surfaces.

Classically, the Atiyah--Segal theorem relates the representation ring $R(\Gamma)$ of a compact Lie group $\Gamma$ to the complex $K$--theory of its classifying space $B\Gamma$.  When $\Gamma$ is an infinite discrete group, non-isomorphic representations may sometimes be connected by a path, and such deformations give rise to isomorphic vector bundles.  To take the topology of the representation spaces into account, one replaces $R(\Gamma)$ by its homotopical analogue $\K(\Gamma)$, and attempts to relate this spectrum to the connective cover of the function spectrum $F(B\Gamma_+, \ku)$.  In~\cite{Ramras-surface}, the author established an isomorphism
\begin{equation}\label{A-S-isom}
\K_*(\pi_1 S) \isom K^{-*} (S)
\end{equation}
for any aspherical surface $S$.  In the non-orientable case, this result holds in all dimensions $*\geqs 0$; in the orientable case there is a failure in dimension zero.
In this section we provide a spectrum-level version of this result, relating the $\ku$--module $\K(\pi_1 S)$ to the function spectrum $F(S_+, \ku)$.  Using Lawson's product formula~\cite{Lawson-prod}, we extend this result to products of aspherical surfaces and compute the stable moduli space for such products.

The computations in this section show a striking similarity to the Quillen--Lichtenbaum conjectures, which predict isomorphisms between algebraic and \'{e}tale $K$--theory of schemes above the (\'{e}tale) cohomological dimension minus 2.  We find that for products of surfaces, deformation $K$--theory agrees with topological $K$--theory in dimensions greater than the rational cohomological dimension of the surface minus 2. 

Once one establishes a relationship between deformation $K$--theory and topological $K$--theory for a group $\Gamma$, the existence of the Atiyah--Hirzebruch spectral sequence
\begin{equation}\label{AHSS}
H^p(B\Gamma; K^q(*)) \conv K^{p+q} (B\Gamma)
\end{equation}
and Lawson's spectral sequence~\cite[Corollary 4]{Lawson-simul}
\begin{equation}\label{LAHSS}
\pi_p \Rdef (\Gamma) \otimes \pi_q \ku \conv \K_{p+q} \Gamma
\end{equation}
suggest that the homotopy groups of $\Rdef \Gamma$ should be closely related to the integral cohomology of $\Gamma$.  We will see that for a product $X$ of orientable surfaces, there is a very precise relationship: $\Rdef_*(\pi_1 X) \isom \wt{H}^*(X; \bbZ)$ in all degrees (Theorem~\ref{orient-prod}).  If $X$ is a product of non-orientable surfaces, there is a \emph{rational} isomorphism between $\Rdef_*(\pi_1 X)$ and  $\wt{H}^*(X; \bbZ)$ (Theorem~\ref{non-or-prod}), but the torsion can appear in different degrees (see Example~\ref{torsion}).

\subsection{A $\ku$--module version of the Atiyah--Segal theorem for surfaces}$\label{Atiyah-Segal}$

We will work in the setting of $\bS$--modules and modules over (commutative) $\bS$-algebras, as developed by Elmendorf, Kriz, Mandell, and May~\cite{EKMM}.  Throughout, $\bS$ will denote the sphere spectrum.  We now give a brief description of some of the main features of this theory, tailored to the subsequent applications and intended for those not familiar with the subject.  Although the technical underpinnings of~\cite{EKMM} are quite intricate, the theory has been developed to a point where it can be used quite readily.  In particular, most of the manipulations to follow are essentially elementary, and depend only on a limited number of formal properties of $\bS$--modules.

In classical stable homotopy theory, smash products of spectra are not well-behaved at the point-set level: in particular, one obtains associativity and commutativity of smash products up to homotopy only.  The category $\M_\bS$ of $\bS$--modules forms a replacement for the ordinary category of spectra, and admits a symmetric monoidal structure $\sm \co \M_\bS \cross \M_\bS \to \M_\bS$ called the smash product, which has the sphere spectrum $\bS$ as its unit object.  Although we will not attempt to describe the definition of $\bS$--modules precisely, we note that at the point-set level they are ``coordinate-free'' spectra with additional structure related to the linear isometries operad.  The category of $\bS$--modules admits a Quillen model structure, and the associated homotopy category (formed by inverting the weak equivalences, i.e. those maps inducing isomorphisms on homotopy groups) is equivalent to the classical stable homotopy category of spectra.  This equivalence respects the smash product~\cite[Theorem II.1.9]{EKMM}.  

In this rigidified setting, it makes sense to discuss algebras and their modules.  A (commutative) $\bS$--algebra is an $\bS$--module $R$ equipped with an associative, commutative, unital multiplication $\mu\co R\sm R \to R$.  Here the unital property refers to a specified unit map $\eta \co \bS \to R$.  The various axioms describing these algebraic assumptions can all be formulated diagrammatically, using the symmetric monoidal structure of the smash product.  
A (left) module over the algebra $R$ is defined as an $\bS$--module $M$ together with a multiplication $\mu \co R\sm M \to M$ satisfying the usual axioms.  The derived category of $R$--modules is formed by (formally) inverting the weak equivalences, i.e. those maps of $R$--modules inducing isomorphisms on all homotopy groups.

Loosely speaking, $\bS$--modules are analogous to abelian groups, and the smash product shares many formal properties with the ordinary tensor product 
$\otimes_\bbZ$ of abelian groups (or more precisely, with its derived functors $\Tor_*$).  These analogies can be seen upon passage to homotopy groups: for example, the homotopy groups of an $\bS$--algebra $R$ form a ring $\pi_* (R)$, and if $M$ is an $R$--module, then $\pi_* M$ is a module over $\pi_* R$.

Within the category of $R$--modules, there is another version of smash product, analogous to the tensor product $\otimes_A$ (and its derived functors $\Tor^A_*$) of modules over a ring $A$.
Given a right module $M$ and a left module $N$ over $A$, their tensor product can be described as the coequalizer of the two multiplication maps
$$\xymatrix{
P\otimes A \otimes Q  \ar@<1ex>[r]^{\Id \otimes \mu} \ar[r]_{\mu \otimes \Id} & P\otimes Q}.$$
When $M$ and $N$ are instead modules over the $\bS$--algebra $R$, replacing $\otimes$ with $\sm$ yields an analogous diagram, and $M\sm_R N$ is defined to be its coequalizer in $\M_\bS$.  The fact that coequalizers, and in fact all limits and colimits, exist in $\M_\bS$ is another important aspect of the theory.  Just as in ordinary algebra, bi-module structures on $M$ or on $N$ give rise to corresponding module structures on $M\sm_R N$.  Function $R$--modules (which we will not need) are defined similarly, and provide a right-adjoint to the smash product $\sm_R$.  Hence this smash product preserves colimits.   In the sequel, we will use this fact to distribute smash products over wedges (coproducts) of $R$--modules.

\vspace{.15in}
In order to describe the $\ku$--module $\K(\pi_1 \Sigma)$ when $\Sigma$ is non-orientable, we need some preliminaries.
Given an $\bS$--module $Y$ and a homotopy class $\alpha \in \pi_d Y$, there is a representing map of $\bS$--modules
$$\alpha\co \bS^d \maps Y,$$
where $\bS^d$ denotes (a cofibrant replacement for) the $d^{\textrm{th}}$ suspension of $\bS$ (see~\cite[II.1.8]{EKMM}; note that $\bS^0 \neq \bS$).  In particular, the element $n\in \pi_0 \bS = \bbZ$ gives rise to a map $d_n\co \bS^0 \to \bS$, and 
there is a corresponding map
\begin{equation}
\bS^0 \sm \ku \xmaps{d_n \sm \mathrm{Id}_\ku} \bS \sm \ku \isom \ku
\end{equation}
which we will denote by $\bd_n$.
The cofiber of this map is denoted $\ku/n$.  Since smashing preserves cofiber sequences~\cite[p. 59]{EKMM}, we have $\ku/n\isom \textrm{Cone} (d_n) \sm \ku$.
Hence $\ku/n$ inherits a $\ku$--module, and from the long exact sequence in homotopy associated to the cofiber sequence $\ku \to \ku \to \ku/n$, we see that $\pi_{2i} \ku/n \isom \Z/n\Z$ for $i\geqs 0$, and $\pi_* \ku/n = 0$ otherwise.  

\begin{theorem}$\label{A-S}$
Let $\Sigma$ be a compact, aspherical, non-orientable surface, and let $k$ denote the rank of $H^1 \Sigma$.  Then there are weak equivalences of $\ku$--modules
$$\K(\pi_1 \Sigma) \heq \ku \vee \bigvee_{k} \Susp \ku \vee \ku/2 \heq \wt{F}(\Sigma_+, \ku),$$
where $\wt{F}$ denotes the connective cover of the (based) function spectrum.

If $M^g$ is an orientable surface of genus $g>0$, then there are weak equivalences of $\ku$--modules
$$\K(\pi_1 M^g) \heq \ku \vee \bigvee_{2g} \Susp \ku \vee \Susp^2 \ku$$
and
$$\wt{F} (M^g_+, \ku) \heq \ku \vee \bigvee_{2g} \Susp \ku \vee \ku.$$
Hence the Bott map $\Susp^2 \ku \to \ku$ induces an isomorphism 
$$\K_* (\pi_1 M^g) \srt{\isom} \pi_* \wt{F} (M^g_+, \ku)$$
for $*>0$.
\end{theorem}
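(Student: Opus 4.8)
The plan is to identify both $\K(\pi_1 S)$ and $\wt F(S_+,\ku)$ with explicit wedges of $\ku$--modules and then compare them. Throughout I will use the following formal facts: $\ku$ is the free $\ku$--module on a class in degree $0$ and $\Susp^d\ku$ the free $\ku$--module on a class in degree $d$ (so a $\ku$--module map out of either is determined, up to homotopy, by one homotopy class in the target); $\ku/2$ is the cofiber of $2\colon\ku\to\ku$; connective covers preserve finite wedges, which are finite products of $\ku$--modules; and the Bott element $\beta$ acts invertibly on $\pi_{\geqs 0}$ of $\ku$, $\Susp^d\ku$ and $\ku/2$, on $\K_{\geqs 0}(F_m)$ (Proposition~\ref{free-Bott}), and, by Proposition~\ref{Bott-inj}, on $\K_*(\pi_1 S)$ for $*>0$ (injectively for $*=0$).

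\emph{The function spectrum.} After one suspension the attaching map of the top cell of a closed surface becomes a sum of Whitehead products and of squares of $1$--cells; the Whitehead products vanish stably, and a change of basis of the wedge of $2$--cells turns the squares into a single degree--$2$ map. Hence $\Susp^\infty M^g_+\heq \bS\vee\bigvee_{2g}\bS^1\vee\bS^2$, and for a non--orientable $\Sigma$ with $k=\mathrm{rank}(H^1\Sigma)$, $\Susp^\infty\Sigma_+\heq\bS\vee\bigvee_k\bS^1\vee\Susp(\bS/2)$, where $\bS/2=\mathrm{cofiber}(2\colon\bS\to\bS)$. Applying $F(-,\ku)$, which carries these wedges to finite products of $\ku$--modules and satisfies $F(\bS^d,\ku)=\Susp^{-d}\ku$ and $F(\Susp(\bS/2),\ku)=\Susp^{-2}(\ku/2)$ (the latter from the defining cofiber sequence of $\bS/2$ together with $\mathrm{fiber}(2\colon\ku\to\ku)\heq\Susp^{-1}(\ku/2)$), and then taking connective covers — which sends $\Susp^{-d}\ku$ to $\Susp\ku$ or $\ku$ according to the parity of $d$, and $\Susp^{-2}(\ku/2)$ to $\ku/2$ — yields the asserted descriptions of $\wt F(M^g_+,\ku)$ and $\wt F(\Sigma_+,\ku)$.

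\emph{Deformation $K$--theory.} I would read off the $\ku$--module homotopy type of $\K(\pi_1 S)$ from Lawson's cofiber sequence (Theorem~\ref{Lawson-cofiber}), Proposition~\ref{Bott-inj}, and the low--degree computations of $\Rdef_*(\pi_1 S)$ in Ramras~\cite{Ramras-surface}. Connectivity of $\K$ and the cofiber sequence give $\K_0(\pi_1 S)\isom\Rdef_0(\pi_1 S)$ and $\K_1(\pi_1 S)\isom\Rdef_1(\pi_1 S)$, known to be $\bbZ$ (resp.\ $\bbZ\oplus\bbZ/2$) and $\bbZ^{2g}$ (resp.\ $\bbZ^k$); Proposition~\ref{Bott-inj} then forces $\Rdef_n(\pi_1 S)=0$ for $n\geqs 3$ and identifies $\Rdef_2(\pi_1 S)$ with $\mathrm{coker}(\beta\colon\K_0\to\K_2)$, which is $\bbZ$ for $M^g$ and $0$ for $\Sigma$. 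Consequently $\pi_*\K(\pi_1 M^g)$ is, as a graded $\pi_*\ku$--module, free on generators in degrees $0$, $1$ (with multiplicity $2g$) and $2$, while $\pi_*\K(\pi_1\Sigma)$ is the sum of a free graded $\pi_*\ku$--module on generators in degrees $0$ and $1$ (multiplicity $k$) with a copy of $\pi_*(\ku/2)$. Choosing homotopy classes realizing these generators — a degree--$1$ class gives a $\ku$--module map $\Susp\ku\to\K$, a degree--$2$ class complementary to $\beta\K_0$ gives $\Susp^2\ku\to\K$, and, via the cofiber sequence $2\colon\ku\to\ku$, the $2$--torsion class in $\K_0$ lifts to $\ku/2\to\K$ — and assembling them with the unit $\ku\to\K(\pi_1 S)$ produces $\ku$--module maps
\[
\ku\vee\bigvee_{2g}\Susp\ku\vee\Susp^2\ku\maps\K(\pi_1 M^g),\qquad
\ku\vee\bigvee_k\Susp\ku\vee\ku/2\maps\K(\pi_1\Sigma).
\]
Since these maps are $\ku$--linear and $\beta$ acts invertibly in nonnegative degrees on both source and target, it is enough to check that they are isomorphisms on $\pi_0$ and $\pi_1$ — which the choices arrange — to conclude they are isomorphisms on all homotopy groups, hence weak equivalences of $\ku$--modules. (Alternatively one could build these splittings from excision, Theorem~\ref{excision-thm}, by identifying the relevant homotopy pullbacks of the free--group modules $\K(F_m)\heq\ku\vee\bigvee_m\Susp\ku$ using Remark~\ref{Kdef*}; but the $\pi_0$ discrepancy in the orientable case would then have to be handled separately anyway.)

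\emph{Comparison and main obstacle.} Comparing the two sides, the $\ku$ and $\bigvee\Susp\ku$ summands agree, and in the orientable case $\K(\pi_1 M^g)$ carries a $\Susp^2\ku$ exactly where $\wt F(M^g_+,\ku)$ carries a $\ku$; the Bott map $\beta\colon\Susp^2\ku\to\ku$ is an isomorphism on $\pi_*$ for $*\geqs 2$, so the induced $\ku$--module comparison $\K(\pi_1 M^g)\maps\wt F(M^g_+,\ku)$ is an isomorphism on $\pi_*$ for $*>0$ (and necessarily not on $\pi_0$), while in the non--orientable case the two models coincide. I expect the $\ku$--module bookkeeping to be the genuine obstacle: upgrading an abelian--group computation to a statement about $\ku$--module equivalences requires the Bott--equivariance argument above, and it also requires knowing that $\beta\K_0\subseteq\K_2$ and the free part of $\K_1$ are saturated, so that complementary $\ku$--module generators exist — precisely what the torsion--free low--degree values of $\Rdef_*$ from~\cite{Ramras-surface} supply. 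A secondary technical point is carrying out the stable splitting of the surface and the identification $F(\Susp(\bS/2),\ku)\heq\Susp^{-2}(\ku/2)$ rigorously within the $\bS$--module framework of~\cite{EKMM}.
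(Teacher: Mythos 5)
Your proposal is correct in substance, and its deformation--$K$--theory half is essentially the paper's own argument: the paper likewise reads off $\K_*(\pi_1 S)$ from the computations in~\cite{Ramras-surface} (quoting the isomorphism $\K_*(\pi_1 S)\isom K^{-*}(S)$ directly rather than re-deriving it from Theorem~\ref{Lawson-cofiber} and Proposition~\ref{Bott-inj} as you do), chooses generators with $\alpha_2$ complementary to $\beta(\alpha_0)$, applies Lemma~\ref{rep} --- including the extension over $\ku/2$ for the $2$--torsion class --- and then bootstraps up the homotopy groups using invertibility of $\beta$ in positive degrees. Where you genuinely diverge is the function--spectrum side: the paper handles $\wt{F}(S_+,\ku)$ by the same generator--picking argument applied to $\pi_*\wt{F}(S_+,\ku)\isom K^{-*}(S)$ (``an analogous argument''), whereas you identify it by splitting $\Susp^\infty S_+$ stably into spheres and a mod--$2$ Moore spectrum and then applying $F(-,\ku)$ and connective covers. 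Your route is more geometric --- it exhibits the $\ku/2$ summand as coming from the top cell of a non-orientable surface --- but it costs you the stable splitting and the identification of the connective covers of the shifted modules with $\ku$, $\Susp\ku$, $\ku/2$, which in the end requires the same Lemma~\ref{rep}--plus--Bott argument, so nothing is really saved.

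One small correction to the logic of your bootstrap in the orientable case: since $\beta\co\K_0(\pi_1 M^g)\to\K_2(\pi_1 M^g)$ is only injective, and likewise $\beta\co\pi_0\to\pi_2$ fails to be surjective on the wedge $\ku\vee\bigvee_{2g}\Susp\ku\vee\Susp^2\ku$ (the $\Susp^2\ku$ summand contributes to $\pi_2$ but not to $\pi_0$), checking that your map is an isomorphism on $\pi_0$ and $\pi_1$ is not enough; you must also verify $\pi_2$ separately, exactly as the paper does. Your choice of a degree--$2$ generator complementary to $\beta\K_0$ does arrange this, so the gap is only in the stated justification, and the induction then runs from base cases in degrees $1$ and $2$.
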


Recall that for $*\geqs 0$ and any space $X$, we have isomorphisms
\begin{equation}\label{ku-KU}\pi_* \wt{F} (X_+, \ku) \isom \ku^{-*} (X) \isom K^{-*}(X),\end{equation}
where $\ku^{-*} (X)$ is the cohomology theory represented by the spectrum $\ku$; note that these groups are also isomorphic to $\pi_* \bMap(X, \bbZ\cross BU)$.
(The first isomorphism in (\ref{ku-KU}) holds even for $*<0$, but since the spaces in the spectrum $\ku$ are Postnikov covers of the spaces in Bott spectrum $\mathbf{KU}$, the isomorphism 
$\ku^* (X) \isom \mathbf{KU}^* (X) =: K^*(X)$ holds only for $*\leqs 0$, or more generally when $X$ is a CW complex with no cells in dimensions $\leqs *$.)

The proof of Theorem~\ref{A-S} relies on the following simple lemma about $\ku$--modules (or more generally, modules over an $\bS$--algebra).  The unit $\eta\co \bS\to \ku$ induces a mapping
$$\bS^d \isom \bS \sm \bS^d \stackrel{ \eta \sm \Id}{\maps}  \ku \sm  \bS^d = \Sigma^d \ku.$$
Letting $1\in \bbZ = \pi_d \bS^d$ denote the canonical generator, we obtain a canonical element 
$$u = (\eta \sm \Id)_* (1) \in \pi_d (\Susp^d \ku).$$

\begin{lemma}$\label{rep}$ If $M$ is a $\ku$--module and $\alpha\in \pi_d M$, then there is a corresponding map of $\ku$--modules $f_\alpha \co \Susp^d \ku \to M$ such $f_\alpha (u) = \alpha$.  If $\alpha$ has order $n$, then this map extends to a map $\bar{f}_\alpha \co \Susp^d (\ku/n) \to M$ in the derived category of $\ku$--modules.
\end{lemma}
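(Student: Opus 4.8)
The plan is to realize $\Susp^d \ku = \ku \sm \bS^d$ as the free $\ku$--module on the cofibrant $\bS$--module $\bS^d$ and to feed $\alpha$ through the resulting adjunction. Recall that $\ku \sm (-) \co \M_\bS \to (\text{$\ku$--modules})$ is left adjoint to the forgetful functor, and since $\bS^d$ is cofibrant (\cite[II.1.8]{EKMM}) the $\ku$--module $\Susp^d \ku$ is cofibrant and this adjunction descends to the derived categories. Thus there is a natural bijection between homotopy classes of $\ku$--module maps $\Susp^d \ku \to M$ and elements of $\pi_d M$; unwinding the definition of the adjunction isomorphism, this bijection sends a map $g$ to $g_*(u)$, where $u = (\eta \sm \Id)_*(1) \in \pi_d(\Susp^d \ku)$ is precisely the image of the canonical generator $1 \in \pi_d \bS^d$ under the unit $\bS^d \to \Susp^d \ku$ of the adjunction. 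I would then take $f_\alpha \co \Susp^d \ku \to M$ to be the $\ku$--module map corresponding to $\alpha$, so that $f_\alpha(u) = \alpha$ by construction. (To obtain an honest point-set map rather than a homotopy class, one replaces $M$ by a fibrant $\ku$--module and uses the point-set adjunction.)

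For the second statement I would use the cofiber sequence defining $\ku/n$. By definition $\ku/n$ is the cofiber of the self-map $\bd_n \co \ku \to \ku$, which induces multiplication by $n$ on homotopy groups, so there is a cofiber sequence $\ku \xmaps{\bd_n} \ku \to \ku/n$ of $\ku$--modules. Smashing with $\bS^d$ preserves cofiber sequences (\cite[p.~59]{EKMM}), so we obtain a cofiber sequence of $\ku$--modules
$$\Susp^d \ku \xmaps{\Susp^d \bd_n} \Susp^d \ku \srm{j} \Susp^d(\ku/n),$$
in which $\Susp^d \bd_n$ again induces multiplication by $n$ on homotopy and hence carries $u$ to $nu$. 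Therefore the composite $f_\alpha \circ \Susp^d \bd_n$ is a $\ku$--module map $\Susp^d \ku \to M$ sending $u$ to $n\alpha$, which vanishes exactly because $\alpha$ has order $n$; by the bijection of the previous paragraph, $f_\alpha \circ \Susp^d \bd_n$ is therefore nullhomotopic as a map of $\ku$--modules.

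Finally, since the derived category of $\ku$--modules is triangulated and $\Susp^d(\ku/n)$ is the cofiber of $\Susp^d \bd_n$, the vanishing of $f_\alpha \circ \Susp^d \bd_n$ means that $f_\alpha$ factors through $j$: there is a map $\bar f_\alpha \co \Susp^d(\ku/n) \to M$ in the derived category of $\ku$--modules with $\bar f_\alpha \circ j \heq f_\alpha$, which is the desired extension. I do not expect a serious obstacle here; the only points requiring care are the identification of $\Susp^d \ku$ as a free (hence cofibrant) $\ku$--module, so that the adjunction genuinely computes maps in the derived category, and the standard triangulated-category fact that a map whose restriction to the base of a cofiber sequence is null extends over the cofiber --- both routine given the formalism of \cite{EKMM} recalled above.
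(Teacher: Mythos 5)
Your proposal is correct and is essentially the paper's argument: the adjoint of $\alpha$ under the free--forgetful adjunction is exactly the composite $\ku \sm \bS^d \xmaps{\Id \sm \alpha} \ku \sm M \srm{\mu} M$ that the paper writes out, and your extension step (the composite with $\Susp^d \bd_n$ kills $u$, hence is null, hence $f_\alpha$ factors through the cofiber) is the same as the paper's, just phrased via the triangulated structure rather than an explicit map of cofiber sequences. The adjunction packaging is a clean way to avoid verifying the module axioms and the identity $f_\alpha(u)=\alpha$ by hand, but it is the same construction.
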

\begin{proof} The map $f_\alpha$ is simply the composite
$$\Susp^d \ku = \ku \sm \bS^d \xmaps{\textrm{Id} \sm \alpha} \ku \sm M \srm{\mu} M.$$
This is a map of $\ku$--modules because the following diagram commutes:
$$\xymatrix{
	\ku \sm \ku \sm \bS^d \ar[rr]^{\Id \sm \Id \sm \alpha} \ar[d]^{\mu \sm \Id} 
		& 
		&\ku \sm \ku \sm M \ar[r]^(.54){\Id \sm \mu} \ar[d]^{\mu \sm \id} 
		& \ku \sm M \ar[d]^\mu\\
	\ku \sm \bS^d \ar[rr]^{\id \sm \alpha} & & \ku \sm M \ar[r]^\mu & M.
	}
$$
(Note that commutativity of the right-hand square is the associativity axiom for the $\ku$--module $M$.)
To check that	$f_\alpha (u) = \alpha$, note that $f_\alpha (u)$ is classified by the map
$$\bS^d \isom \bS \sm \bS^d \xmaps{\eta \sm \id} \ku \sm \bS^d \xmaps{\id \sm \alpha} \ku \sm M \srm{\mu} M,$$
or equivalently 
$$\bS \sm \bS^d \xmaps{\id \sm \alpha} \bS \sm M \xmaps{\eta \sm \id} \ku \sm M \srm{\mu} M.$$
The unit axiom for the $\ku$--module $M$ says that $\mu \circ (\eta \sm \id) = \id_M$, so the composite is precisely $\alpha$.

Finally, say $\alpha$ has order $n$.  We must show that $f_\alpha$ extends over the mapping cone 
$$\Sigma^d(\ku/n) \isom \textrm{Cone}\left( \Sigma^d \ku \xmaps{\Sigma^d \bd_n} \Sigma^d \ku \right)$$
(note here that smash products with spaces, and in particular suspensions, commute with taking cofibers).  
We claim that the composition 
$$\Sigma^d \ku\xmaps{\Sigma^d \bd_n} \Sigma^d \ku \srm{f_\alpha} M$$
is nullhomotopic.  It follows from the definitions that this map is simply 
$$(\id_\bS \sm \mu) \circ (d_n \sm \id_{\ku} \sm \alpha)$$
and $d_n \sm \alpha$ is nullhomotopic since $\alpha$ has order $n$.  This nullhomotopy $H$ yields a map of cofiber sequences
$$\xymatrix{  \Sigma^d \ku\ar[r]^{\Sigma^d \bd_n} \ar[d] 
				& \Sigma^d \ku \ar[d]^{f_\alpha} \ar[r]
				& \Sigma^d(\ku/n) \ar@{.>}[d]^{\wt{f}_\alpha}\\
		   \textrm{Cone} (\Sigma^d \ku) \ar[r]^(.6)H 
		   		& M \ar[r]^(.4)j_(.4){\heq} 
				& \textrm{Cone} (H).
		}
$$
and $\bar{f}_\alpha = j^{-1} \wt{f}_\alpha$ is the desired map in the derived category.
\end{proof}

\noindent {\bf Proof of Theorem~\ref{A-S}.}
First we consider the orientable case, where (\ref{A-S-isom}) yields isomorphisms 
$\K_0 (\pi_1 M^g) \isom \bbZ$, $\K_1(\pi_1 M^g) \isom \bbZ^{2g}$, and $\K_2(\pi_1 M^g) \isom \bbZ^2$.  Let $\alpha_0\in \K_0(\pi_1 M^g)$ denote a generator, and let $\alpha_1^{1}, \ldots, \alpha_1^{2g}$ denote a basis for $\K_1(\pi_1 M^g)$.  The Bott map $\K_0 (\pi_1 M^g) \maps \K_2 (M^g)$ is injective with cokernel $\bbZ$ by~\cite[Proposition 6.6]{Ramras-surface}, so we may choose $\alpha_2 \in \K_2(\pi_1 M^g)$ such that $\K_2(\pi_1 M^g)$ is generated by $\beta(\alpha_0)$ and $\alpha_2$.  Lemma~\ref{rep} now yields a map of $\ku$--modules
\begin{equation}\label{A-S-map}
\ku \vee \bigvee_{2g} \Susp \ku \vee \Susp^2 \ku 
\xmaps{f_{\alpha_0} \vee \left( \bigvee_{i} f_{\alpha_1^i}\right) \vee f_{\alpha_2}} \K(\pi_1 M^g),
\end{equation}
which we claim is a weak equivalence.  This map is induces isomorphisms on $\pi_0$, $\pi_1$, and $\pi_2$ by construction.  On both sides of (\ref{A-S-map}), the Bott map induces isomorphisms $\beta_*\co \pi_i (-) \to \pi_{i+2} (-)$ for $i>0$, so we conclude by induction that our map is an isomorphism on homotopy in all degrees.  An analogous argument provides the desired map to the function spectrum.

The non-orientable case is similar, except that now we have an element of order two in $\K_0(\pi_1 \Sigma) \isom \pi_0 \wt{F}(\Sigma_+, \ku) \isom \bbZ\oplus \bbZ/2\bbZ$, so the representing map for this element extends over $\ku/2$.
$\hfill \Box$

\subsection{Products of surfaces}$\label{products}$

Combining the results of the previous section with Lawson's product formula
\begin{equation} \label{prod-form} \K(G\cross H) \heq \K(G) \sm_{\ku} \K(H) \end{equation}
(see~\cite{Lawson-prod}), we can compute deformation $K$--theory and the deformation representation ring $\Rdef$ for products of aspherical surfaces and circles.  
In addition to computing $\Rdef$ for products, we can also compute the stable moduli space: for this we need to know that $\Rep(\pi_1 X)$ is stably group-like, so that we can relate $\Mf$ and $\Rdef$.

\begin{remark} In (\ref{prod-form}), the derived smash product must be used, i.e. we first replacing one factor by a weakly equivalent cell module~\cite[IV.1]{EKMM}.  Such a cofibrant replacement always exists by~\cite[Theorem III.2.10]{EKMM}.  By~\cite[Theorem III.3.8]{EKMM}, this process produces a well-defined weak homotopy type (to compare different cofibrant replacements, one simply performs cofibrant replacement on both factors).  Similar comments apply to Lemma~\ref{ku/n} below.
\end{remark}

\begin{lemma}$\label{pi_0}$
Let $X$ be a product of aspherical surfaces and circles.  Then the monoid $\Rep(\pi_1 X)$ is stably group-like.
\end{lemma}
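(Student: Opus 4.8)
The plan is to check the definition directly. Write $\pi_1 X = \Gamma_1 \times \cdots \times \Gamma_r$ with each $\Gamma_j$ equal to $\bbZ$ (a circle factor) or the fundamental group of an aspherical surface; I must show that every $\rho\co \pi_1 X \to U(n)$ becomes, after direct sum with some $\rho^*$, path-connected to a trivial representation. The inputs are the known cases: $\Hom(\bbZ, U(k)) = U(k)$ is connected, so circle factors are immediate; for an aspherical surface group the statement is \cite[Section~4]{Ramras-surface} (see also \cite{Ho-Liu-ctd-comp-II}), and there the components of $\Hom(\pi_1\Sigma, U(k))$ for $\Sigma$ non-orientable are detected by the mod-$2$ obstruction class $o = c_1(E_{(-)})$, which is additive under $\oplus$ and takes both its values already on $1$-dimensional characters, while $\Hom(\pi_1 M^g, U(k))$ is connected.

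First I would strip off the circle and orientable-surface factors. If $\Gamma_r$ is such a factor, write $\rho = (\sigma,\tau)$ with $\sigma = \rho|_{\Gamma_1\times\cdots\times\Gamma_{r-1}}$ and $\tau = \rho|_{\Gamma_r}$; then $\tau$ lands in the centralizer $Z = Z_{U(n)}\bigl(\sigma(\Gamma_1\times\cdots\times\Gamma_{r-1})\bigr)$, a product of unitary groups, so $\Hom(\Gamma_r, Z)$ is path-connected. Hence $\tau$ can be deformed to the trivial homomorphism with $\sigma$ held fixed, which connects $\rho$ to a representation pulled back along the projection $\pi_1 X \to \Gamma_1\times\cdots\times\Gamma_{r-1}$; pulling back a witnessing $\rho^*$ (and a path) for that smaller group completes the inductive step. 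So we reduce to $X = \Sigma_1\times\cdots\times\Sigma_s$ with all $\Sigma_i$ non-orientable (the case $s = 0$ being a single circle or orientable surface, already treated).

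Next I would normalize the obstruction. By the K\"unneth theorem $\mathrm{tors}\, H^2(X;\bbZ) \isom (\bbZ/2)^s$, and since $E_\rho$ is flat, $c_1(E_\rho)$ is torsion, with $i$-th coordinate $o(\rho|_{\pi_1\Sigma_i})$; adding to $\rho$ a pullback of the order-$2$ character $\chi_i$ of $\pi_1\Sigma_i$ for each $i$ with $o(\rho|_{\pi_1\Sigma_i}) = 1$ produces $\rho'$ with $c_1(E_{\rho'}) = 0$. It therefore suffices to prove that any $\rho'\co \pi_1 X \to U(m)$ with $o(\rho'|_{\pi_1\Sigma_i}) = 0$ for every $i$ lies in the path component of the trivial representation (equivalently, that after stabilization $c_1$ is a complete invariant of path components). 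I would prove this by induction on $s$: writing $X = X'\times\Sigma_s$ and $\rho' = (\sigma,\tau)$ with $\tau$ valued in the centralizer $Z = \prod_b U(m_b)$ of $\sigma(\pi_1 X')$, one wants to carry $\tau$ — compatibly with $\sigma$ — to the trivial homomorphism, after which $\rho'$ factors through $\pi_1 X'$ and the inductive hypothesis finishes things.

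That last deformation is the crux, and where genuine work is needed. Writing $\sigma = \bigoplus_b \sigma_b^{\oplus m_b}$ for the isotypic decomposition and $\tau = (\tau_b)_b$ with $\tau_b\co\pi_1\Sigma_s\to U(m_b)$, the vanishing of $o(\rho'|_{\pi_1\Sigma_s})$ gives only the single relation $\sum_b (\dim\sigma_b)\, o(\tau_b) = 0$, not $o(\tau_b) = 0$ block by block; since representations of a product do not split, $\tau$ cannot simply be deformed to the trivial homomorphism with $\sigma$ fixed. I expect to resolve this by first deforming $\sigma$, within its (identity) component of $\Hom(\pi_1 X', U(n))$ and along a path chosen so that the centralizer of $\sigma$ only grows (e.g.\ coarsening its isotypic decomposition), so as to pair up the blocks on which $o(\tau_b) = 1$ — possible precisely because their total contribution to the obstruction vanishes — after which each resulting block carries a $\tau$-component of vanishing obstruction, and the single-surface result for $\pi_1\Sigma_s$, applied inside each factor of the enlarged centralizer, lets one carry $\tau$ to the trivial homomorphism. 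The inductive hypothesis for $X'$ then completes the proof, and the required $\rho^*$ is assembled from the characters $\chi_i$ together with the representations supplied by the successive reductions.
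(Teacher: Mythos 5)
Your opening reduction --- peeling off circle and orientable-surface factors by deforming the restriction of $\rho$ to such a factor inside the centralizer of the image of the remaining factors, which is a product of unitary groups with connected representation spaces --- is correct, and is genuinely different from anything in the paper's proof. The argument breaks, however, at the non-orientable stage. You assert that it suffices to show that any $\rho'$ with $o(\rho'|_{\pi_1\Sigma_i})=0$ for every $i$ lies in the stable path component of the trivial representation, ``equivalently, that after stabilization $c_1$ is a complete invariant of path components.'' That statement is false, and the paper's own computations refute it: the assignment $[\rho]\mapsto\bigl(o(\rho|_{\pi_1\Sigma_i})\bigr)_i$ is an additive map $\pi_0\Mf(X)\to(\bbZ/2\bbZ)^s$, so if its kernel were trivial then $\pi_0\Mf(X)$ would have at most $2^s$ elements; but by Proposition~\ref{non-or-prod} and Example~\ref{torsion}, for $X=(\bbRP^2\#\bbRP^2)^3$ one has $\Rdef_0(\pi_1 X)\cong\bbZ\oplus(\bbZ/2\bbZ)^7$, i.e.\ $\pi_0\Mf(X)\cong(\bbZ/2\bbZ)^7$ while $s=3$ (already for two Klein bottles one gets $(\bbZ/2\bbZ)^3$ versus $s=2$). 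Hence there exist representations with all factor-wise obstructions zero that are not stably connected to the trivial representation, and the ``crux'' deformation you defer --- coarsening the isotypic decomposition of $\sigma$ so as to trivialize $\tau$ whenever the total obstruction vanishes --- cannot be carried out in general; the extra $2$--torsion in $\pi_0\Mf(X)$ is exactly the obstruction to it. In particular, a $\rho^*$ assembled from one-dimensional characters cannot witness the stably group-like condition for such $\rho$.

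The paper avoids this entirely by never attempting to connect $\rho$ plus characters to the trivial representation: it takes $\rho^*=(2^d-1)\rho$, where $d$ is the number of non-orientable factors. Decomposing $\rho$ into irreducibles and using that every irreducible representation of a product $G\times H$ is an external tensor product $\psi\otimes\phi$, one obtains $2^{d}\rho\cong\bigoplus_i\bigl(2^{d_1}\psi_i\otimes 2^{d_2}\phi_i\bigr)$ with $d_1+d_2=d$ splitting according to the two groups of factors, and each tensor factor is connected to a trivial representation by induction on the number of factors (the base case being the Ho--Liu result for a single non-orientable surface). The essential extra ingredient is thus multiplicativity under $\otimes$, not merely additivity under $\oplus$; any correct proof must allow $\rho^*$ to involve higher-dimensional representations built from $\rho$ itself, not only characters.
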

\begin{proof}   Let $X$ be a product of $d$ (aspherical) non-orientable surfaces and $k$ (aspherical) orientable surfaces or circles.  We will in fact show that for each representation $\rho\co \pi_1 X\to U(n)$, the representation $2^d \rho\co \pi_1 X\to U(2^d n)$ lies in the component of the trivial representation (so in the terminology of Section~\ref{zeroth-sp}, $\rho^* = (2^d -1)\rho$).  Note that when $d=0$, the conclusion is that all the representation spaces are connected.

We will need a general observation about direct products.
Any unitary representation of a discrete group is a sum of irreducible representations, and every irreducible representation $\rho\co G\cross H\to U(n)$ is isomorphic to a representation $\psi \otimes \phi$, with $\psi\co G\to U(n)$ and $\phi\co H\to U(n)$ irreducible (see, for example, Lawson~\cite[Lemma 37]{Lawson-prod}).
Irreducibility of $\psi$ and $\phi$ will not be important. 

The proof of the lemma now proceeds by induction on $d+k$; the case of a single surface was proven in~\cite[Section 4]{Ramras-surface} using results of Ho and Liu~\cite{Ho-Liu-ctd-comp-II} in the non-orientable case.  Assuming the result for $d+k$, we consider the case $X\cross S$ where $S$ is an aspherical surface or a circle and $X$ is a product of $d$ non-orientable surfaces and $k$ orientable surfaces or circles.  
Let $\epsilon = 0$ if $S$ is orientable, and let $\epsilon = 1$ if $S$ is non-orientable.  For
any representation $\rho\co \pi_1 X\cross \pi_1 S \to U(n)$, we have an isomorphism
$\rho \isom \oplus_i \psi_i \otimes \phi_i$, where the $\psi_i$ (respectively $\phi_i$) are unitary representations of $\pi_1 X$ (respectively $\pi_1 S$).
We now have:
\begin{eqnarray*}
2^{d+\epsilon} \rho  \isom 2^{d+\epsilon} \left(\bigoplus_i \psi_i \otimes \phi_i \right)  \isom
	\bigoplus_i 2^{d+\epsilon} (\psi_i \otimes \phi_i)\\
	 \isom \bigoplus_i (2^{d} \psi_i \otimes 2^\epsilon \phi_i).
\end{eqnarray*}
By induction, the representations $2^{d} \psi_i$ and $2^\epsilon \phi_i$ all lie in the component of the trivial representation.  Moreover, since $U(n)$ is connected, isomorphic representations lie in the same component, and this completes the proof.
\end{proof}

As in Lemma~\ref{EM}, we can now relate $\Rdef$ and $\Mf$ for products of surfaces.

\begin{lemma}$\label{EM2}$ Let $X$ be a product of surfaces and circles.  Then the zeroth space of the spectrum $\Rdef(\pi_1 X)$ is weakly equivalent to $\bbZ \cross \Mf(X)$.
In particular, $\Mf(X)$ is weakly-equivalent to a product of Eilenberg--MacLane spaces.
\end{lemma}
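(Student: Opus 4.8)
The plan is to mimic the proof of Lemma~\ref{EM} almost verbatim, feeding in Lemma~\ref{pi_0} wherever the single-surface argument of Ramras~\cite[Section 6]{Ramras-surface} invoked the component computations for surface groups. By Theorem~\ref{Lawson-cofiber}, the spectrum $\Rdef(\pi_1 X)$ is by definition the one associated to the topological \emph{abelian} monoid
$$M = \coprodmo_{n=0}^{\infty} \Hom(\pi_1 X, U(n))/U(n),$$
with block sum as operation (strictly commutative on the $U(n)$--quotients), so its zeroth space is the group completion $\Omega BM$. By Lemma~\ref{pi_0}, $\Rep(\pi_1 X)$ --- and hence $M$ --- is stably group-like with respect to the trivial representation $1 \in \Hom(\pi_1 X, U(1)) = M_1$, so I can run the argument of Proposition~\ref{gp-completion} in this (quotient rather than homotopy-orbit) setting.

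First I would produce the weak equivalence $\Omega BM \heq \bbZ \cross \Mf(X)$. The proof of Proposition~\ref{gp-completion} applies \emph{mutatis mutandis} to the monoid $M$ and shows that $\Omega BM$ is weakly equivalent to the mapping telescope $T = \tele\big( M \xmaps{\,\oplus 1\,} M \xmaps{\,\oplus 1\,} \cdots \big)$, where $\oplus 1$ denotes block sum with the trivial representation. Since $\oplus 1$ carries $M_n = \Hom(\pi_1 X, U(n))/U(n)$ into $M_{n+1}$, the telescope $T$ decomposes as a disjoint union, indexed by $j \in \bbZ$, of telescopes of the form $\tele\big( M_{n_0} \to M_{n_0+1} \to \cdots \big)$; and because the bonding maps $M_n \injects M_{n+1}$ are closed inclusions (as noted in the proof of Lemma~\ref{CW}), each of these is homotopy equivalent to $\colim_n M_n = \Mf(X)$. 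Hence $\Omega BM \heq T \heq \bbZ \cross \Mf(X)$.

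For the remaining assertion I would show that $\Mf(X)$ is itself a \emph{grouplike} topological abelian monoid; granting this, it has the weak homotopy type of a topological abelian group, and hence of a product of Eilenberg--MacLane spaces, exactly as in Lemma~\ref{EM}. Block sum makes $\Mf(X) = \colim_n M_n$ a topological abelian monoid, whose identity element $\tau$ is the common image of the trivial $n$--dimensional representations, so it suffices to see that $\pi_0 \Mf(X)$ is a group. If $X$ is a product of $d$ non-orientable surfaces together with some orientable surfaces and circles, then Lemma~\ref{pi_0} gives that $2^d \rho$ lies in the component of the trivial representation for every $\rho$, so $2^d [\rho] = \tau$ in $\pi_0 \Mf(X)$; therefore $(2^d - 1)[\rho]$ is a two-sided inverse of $[\rho]$, and $\pi_0 \Mf(X)$ is a group.

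I expect the only real work to be bookkeeping: making precise the decomposition of $T$ over $\bbZ$ and the identification of each constituent telescope with the colimit defining $\Mf(X)$, and checking that the hypotheses of Proposition~\ref{gp-completion} apply with $\psi_0 = 1$. As with all such group-completion arguments, the one genuine input is adequate control of $\pi_0$ of the representation spaces, which is precisely what Lemma~\ref{pi_0} supplies; no idea beyond it and the proof of Lemma~\ref{EM} should be needed.
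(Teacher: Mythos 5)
Your argument is correct and is exactly the route the paper intends: Lemma~\ref{EM2} is stated without proof precisely because it follows ``as in Lemma~\ref{EM}'', i.e.\ by running the group-completion argument of Proposition~\ref{gp-completion} on the abelian quotient monoid $\coprod_n \Hom(\pi_1 X, U(n))/U(n)$, with Lemma~\ref{pi_0} supplying the stably group-like hypothesis and the telescope splitting as $\bbZ$ copies of $\colim_n \Hom(\pi_1 X,U(n))/U(n)$. Your verification that the bonding maps are closed inclusions (so telescope $\simeq$ colimit) and that grouplikeness of $\pi_0\Mf(X)$ follows from the $2^d\rho$ statement in Lemma~\ref{pi_0} fills in the only details the paper leaves implicit.
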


\begin{theorem}$\label{orient-prod}$ Let $X$ be a product of orientable surfaces and circles.  
Then the stable moduli space $\Mf(X)$ is homotopy equivalent to $\Sym^\infty (X)$, 
and there is a morphism 
$$\K(\pi_1 X) \maps F(X_+, \ku)$$
in the derived category of $\ku$--modules
that induces an isomorphism on $\pi_*$ for $*>\dim X - 2$.  Moreover, the groups $\K_{\dim X - 2} (\pi_1 X)$ and $\pi_{\dim X - 2} (\wt{F} (X_+, \ku)) \isom K^0 (X)$ are \emph{not} isomorphic.
\end{theorem}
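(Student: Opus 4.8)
\emph{Proof strategy.} The plan is to run everything through the $\ku$--module structure of $\K(\pi_1 X)$, which Lawson's product formula~(\ref{prod-form}) reduces to the single--surface and single--circle calculations already in hand. Write $X = S_1\times\cdots\times S_r$ with each $S_i$ an orientable surface or a circle, and set $N = \dim X = \sum_i \dim S_i$; we may assume $N\geqs 2$, the case $X = S^1$ being elementary (there $\Mf(S^1)\homeo\Sym^\infty(S^1)$ directly and $\dim X<2$). By Theorem~\ref{A-S} (for orientable surfaces) and Proposition~\ref{free-Bott} applied to $F_1\isom\bbZ$ (for circles), each $\K(\pi_1 S_i)$ is a finite wedge of suspensions of $\ku$ whose summand--degrees, with multiplicity, are the exponents of the Poincar{\'e} polynomial $P_{S_i}(t)$ (namely $1+2g_i t+t^2$ for a surface, $1+t$ for a circle). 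Using (\ref{prod-form}) and the fact (Section~\ref{Atiyah-Segal}) that $\sm_\ku$ distributes over wedges, I would first record
$$\K(\pi_1 X)\ \heq\ \K(\pi_1 S_1)\sm_\ku\cdots\sm_\ku\K(\pi_1 S_r)\ \heq\ \bigvee_{\tau}\Susp^{|\tau|}\ku,$$
where $\tau$ ranges over choices of one wedge summand from each factor, $|\tau|$ is the sum of the chosen degrees, and the multiplicity of $\Susp^m\ku$ is the $t^m$--coefficient of $\prod_i P_{S_i}(t)$, which by K\"unneth is the Poincar{\'e} polynomial $P_X(t)$. In particular every summand--degree lies in $[0,N]$, and there is a \emph{unique} summand $\Susp^N\ku$ (the product of the top classes).

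For the equivalence $\Mf(X)\heq\Sym^\infty(X)$ I would argue as in the proof of Theorem~\ref{moduli}: by Lemmas~\ref{EM2} and~\ref{CW} both sides are CW products of Eilenberg--MacLane spaces, so it suffices to match homotopy groups. Since the homotopy cofiber of $\beta_\ku$ is $H\bbZ$ and the Bott map acts on $\bigvee_\tau\Susp^{|\tau|}\ku$ summandwise by the corresponding shifted Bott maps, Theorem~\ref{Lawson-cofiber} gives $\Rdef(\pi_1 X)\heq\bigvee_\tau\Susp^{|\tau|}H\bbZ$, so $\Rdef_m(\pi_1 X)$ is free abelian of rank equal to the $t^m$--coefficient of $P_X(t)$, i.e. to $\textrm{rank}\,H_m(X;\bbZ)$, for every $m$. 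As $X$ has torsion--free homology this is $\wt H_m(X;\bbZ)=\pi_m\Sym^\infty(X)$ for $m\geqs 1$ (Dold--Thom), and $\pi_0\Mf(X)=\pi_0\Sym^\infty(X)=\ast$ by Lemma~\ref{pi_0} and connectedness of $X$; hence the two spaces are homotopy equivalent.

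The morphism $\Theta\co\K(\pi_1 X)\to F(X_+,\ku)$ is built as the composite of the equivalence $\K(\pi_1 X)\heq\K(\pi_1 S_1)\sm_\ku\cdots\sm_\ku\K(\pi_1 S_r)$, the smash product $\theta_1\sm_\ku\cdots\sm_\ku\theta_r$ of the single--factor maps $\theta_i\co\K(\pi_1 S_i)\to F(S_{i,+},\ku)$ provided by Theorem~\ref{A-S} (followed by the connective--cover map $\wt F\to F$; for a circle, the analogous map, which is an equivalence), and the Spanier--Whitehead duality equivalence $F(S_{1,+},\ku)\sm_\ku\cdots\sm_\ku F(S_{r,+},\ku)\heq F(X_+,\ku)$ (each $S_{i,+}$ being finite). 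Tracing Theorem~\ref{A-S}, $\theta_i$ carries the degree--$t_i$ summand $\Susp^{t_i}\ku$ of $\K(\pi_1 S_i)$ into the summand $\Susp^{-t_i}\ku$ of $F(S_{i,+},\ku)$ by multiplication by $\beta^{t_i}$; hence, up to these equivalences, $\Theta=\bigvee_\tau\bigl(\beta^{|\tau|}\co\Susp^{|\tau|}\ku\to\Susp^{-|\tau|}\ku\bigr)$. On $\pi_n$ each such summand map is multiplication by $\beta^{|\tau|}\co\pi_{n-|\tau|}\ku\to\pi_{n+|\tau|}\ku$: an isomorphism when $n\geqs|\tau|$, and the zero map $0\to\pi_{n+|\tau|}\ku$ otherwise, the latter failing to be onto exactly when $n+|\tau|$ is a nonnegative even integer. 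Since $|\tau|\leqs N$ with equality for a unique $\tau$, at $n=N-1$ the only potentially bad summand has $|\tau|=N$ and $\pi_{n+|\tau|}\ku=\pi_{2N-1}\ku=0$ ($2N-1$ odd); so $\Theta$ is an isomorphism on $\pi_n$ for $n\geqs N-1$, i.e. for $*>\dim X-2$. At $n=N-2$ that same summand gives $0\to\pi_{2N-2}\ku\isom\bbZ$, so $\K_{N-2}(\pi_1 X)$ is free abelian of rank one less than $\pi_{N-2}F(X_+,\ku)$. Finally $\pi_{N-2}F(X_+,\ku)=\pi_{N-2}\wt F(X_+,\ku)\isom K^{2-N}(X)$ by~(\ref{ku-KU}), and $K^{2-N}(X)\isom K^0(X)$ --- by Bott periodicity if $N$ is even, and, if $N$ is odd, because a circle factor (forced by the parity of $\dim X$) gives $K^0(X)\isom K^1(X)$. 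Thus $\K_{\dim X-2}(\pi_1 X)\not\isom K^0(X)$.

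The step I expect to be the main obstacle is the bookkeeping in the last paragraph: verifying that the single--factor maps of Theorem~\ref{A-S}, once smashed together over $\ku$, really respect the wedge splittings with precisely the maps $\beta^{|\tau|}$ on the summands, and then pinning down the exact homotopical degree of the first failure --- in particular the parity argument that rules out degree $N-1$. The remaining ingredients (the product formula, Lawson's cofiber sequence, Theorem~\ref{A-S}, Proposition~\ref{free-Bott}, and Lemmas~\ref{EM2}, \ref{CW}, \ref{pi_0}) are used essentially as stated, modulo the standard cofibrant--replacement caveat for derived smash products.
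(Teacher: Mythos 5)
Your proposal is correct and follows essentially the same route as the paper's (much terser) proof: Lawson's product formula together with the wedge decompositions from Theorem~\ref{A-S} and Proposition~\ref{free-Bott}, Lawson's cofiber sequence to identify $\pi_*\Rdef(\pi_1 X)$ with $H_*(X)$, and Lemmas~\ref{EM2} and~\ref{CW} for the equivalence with $\Sym^\infty(X)$. Your explicit bookkeeping of the summand maps $\beta^{|\tau|}$ and the parity argument locating the first failure precisely at $* = \dim X - 2$ correctly fill in what the paper dismisses as ``follows easily, by induction, from the fact that smash products distribute over wedges.''
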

\begin{proof} By Theorem~\ref{A-S}, we know that $\K(\pi_1 M^g)$ is (graded) free as a $\ku$--module, as is $\K(\pi_1 S^1) = \K(\bbZ) \heq \ku \vee \Susp \ku$ (Proposition~\ref{free-Bott}).  The statements about $\K$ now follow easily, by induction, from the fact that smash products distribute over wedges.  

Lawson's cofiber sequence shows that the homotopy of $\Rdef$ precisely measures the failure of the Bott map to be an isomorphism.  In this case, the Bott map is completely explicit, and one obtains an isomorphism
$$\pi_* \Rdef(\pi_1 X) \isom H_* (X)$$
by reading off the dimensions in which the wedge factors of $\K(\pi_1 X)$ appear.  The desired homotopy equivalence now follows from Lemma~\ref{EM2} and Lemma~\ref{CW}.
\end{proof}

When we bring in non-orientable surfaces, the situation is not as simple.  First, we need a lemma.

\begin{lemma}$\label{ku/n}$
For any $n, m\in \bbN$, there is a weak equivalence of $\ku$--modules
$$\ku/n \sm_{\ku} \ku/m \heq \ku/\gcd(n,m) \vee \Susp \ku/\gcd(n,m).$$
\end{lemma}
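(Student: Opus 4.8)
The plan is to recognize the left-hand side as an iterated mapping cone and then pin down its homotopy type using Lemma~\ref{rep}. By construction $\ku/n$ is the cofiber of the self-map $\bd_n\co\ku\to\ku$, and the relative smash product $\sm_\ku$, being a left adjoint (function $\ku$--modules provide a right adjoint), preserves colimits and in particular cofiber sequences. So I would smash the cofiber sequence $\ku\xmaps{\bd_n}\ku\maps\ku/n$ with $\ku/m$ over $\ku$ and use that $\ku$ is the unit for $\sm_\ku$; this yields a cofiber sequence of $\ku$--modules
$$\ku/m \xmaps{\bd_n} \ku/m \maps \ku/n\sm_\ku\ku/m.$$
(As in the remark preceding the statement, $\sm_\ku$ denotes the derived smash product, computed after replacing one factor by a cell $\ku$--module.) Thus it suffices to analyze the cofiber $C$ of multiplication by $n$ on $\ku/m$, together with a bit of its module structure.

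Next I would compute homotopy groups. Writing $g=\gcd(n,m)$, recall $\pi_{2i}\ku/m\isom\bbZ/m$ for $i\geqs 0$ with all other homotopy groups zero, and that multiplication by $n$ on $\bbZ/m$ has kernel and cokernel each cyclic of order $g$. The long exact sequence of the cofiber sequence above then gives $\pi_j C\isom\bbZ/g$ for all $j\geqs 0$ (cokernels contributing in even degrees, kernels in odd degrees) and $\pi_j C=0$ for $j<0$. On the other hand $\pi_j(\ku/g\vee\Susp\ku/g)\isom\pi_j\ku/g\oplus\pi_{j-1}\ku/g$ equals $\bbZ/g$ for $j\geqs 0$ and $0$ otherwise, so the two sides have abstractly isomorphic homotopy; the real content of the lemma is to produce an honest $\ku$--module map realizing this.

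To produce it, I would pick generators $\alpha_0\in\pi_0 C$ and $\alpha_1\in\pi_1 C$, each of order exactly $g$. Lemma~\ref{rep} then supplies $\ku$--module maps $\bar f_{\alpha_0}\co\ku/g\to C$ and $\bar f_{\alpha_1}\co\Susp\ku/g\to C$ in the derived category sending the canonical generators to $\alpha_0$ and $\alpha_1$, and their wedge
$$\phi=\bar f_{\alpha_0}\vee\bar f_{\alpha_1}\co\ku/g\vee\Susp\ku/g\maps C$$
is a $\ku$--module map that is an isomorphism on $\pi_0$ and $\pi_1$ by construction. I would finish by a degreewise bootstrap: multiplication by the Bott element $\beta$ induces isomorphisms $\pi_i(-)\to\pi_{i+2}(-)$ for $i\geqs 0$ on the source (from the defining cofiber sequences of the $\ku/g$ wedge summands together with the corresponding statement for $\ku$, via the five lemma) and on $C$ (from the displayed cofiber sequence, since $\beta$ is $\pi_*\ku$--linear and is an isomorphism $\pi_i\ku/m\to\pi_{i+2}\ku/m$ for $i\geqs 0$). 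Since $\phi$ is $\ku$--linear it commutes with these Bott isomorphisms, so induction on degree --- with base case $i\in\{0,1\}$ --- shows $\phi_*$ is an isomorphism in every degree, hence $\phi$ is a weak equivalence of $\ku$--modules.

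The one step I expect to be the genuine obstacle is this last upgrade, from matching homotopy groups to an actual equivalence of $\ku$--module spectra: it is precisely what Lemma~\ref{rep} is designed for, and the Bott-periodicity induction is what propagates the low-degree identification to all degrees. The remaining ingredients --- careful bookkeeping with the derived smash product, and the edge cases of the five-lemma arguments in low degrees --- are routine and are handled as indicated above.
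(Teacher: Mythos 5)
Your proposal is correct and follows essentially the same route as the paper's proof: smash the defining cofiber sequence with the other factor to identify $\ku/n\sm_\ku\ku/m$ as the cofiber of multiplication by $n$ on $\ku/m$, compute $\pi_*\isom\bbZ/\gcd(n,m)$ in all non-negative degrees from the long exact sequence, build the comparison map from generators in degrees $0$ and $1$ via Lemma~\ref{rep}, and propagate the isomorphism to all degrees using that the Bott map is an isomorphism on both sides. The only (immaterial) difference is which factor you smash into the cofiber sequence.
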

\begin{proof}  The proof is similar to that of Theorem~\ref{A-S}.  We begin by computing
$\pi_* (\ku/n \sm_{\ku} \ku/m)$.  In ordinary ring theory, one has an isomorphism
$$R/mR \otimes_R R/nR \isom (R/nR)/m(R/nR),$$
and smashing the cofiber sequence defining $\ku/m$ with the $\ku$--module $\ku/n$, we obtain, analogously, a cofiber sequence
\begin{equation}\label{gcd} (\bS^0 \sm \ku) \sm_\ku (\ku/n) \xrightarrow{\bd_m \sm \Id} \ku \sm_\ku (\ku/n) \maps \ku/m\sm_\ku \ku/n.
\end{equation}
(Note here that smashing over $\ku$ preserves cofiber sequences~\cite[p. 59]{EKMM}.)
Using the definition of $\ku\sm_\ku \ku/n$ as a coequalizer, we see that this module is simply $\ku/n$, and moreover the map $d_m\sm \Id$ is identified with 
$$\bS^0 \sm \ku/n \xrightarrow{d_m\sm \Id} \bS\sm \ku/n \isom \ku/n.$$
Hence on (positive) even-dimensional homotopy, $\bd_m\sm \Id$ is simply multiplication by $m$ on $\pi_{2k} (\ku/n)\isom Z/n\Z$, while in all other dimensions we have $\pi_{l} (\ku\sm_\ku \ku/n) \isom \pi_{l} (\ku/n) = 0$.  The long exact sequence in homotopy associated to (\ref{gcd}) now breaks down into exact sequences
$$0 \maps \Z/\gcd(m,n) \Z \maps \Z/n\Z \stackrel{\cdot m}{\maps} \Z/n\Z \maps \Z/\gcd(m,n)\Z \maps 0,$$ and we obtain isomorphisms $\pi_*\left(\ku/n \sm_\ku \ku/m\right) \isom \Z/\gcd(m,n)$ for $*\geqs 0$ (and $\pi_* \left(\ku/n \sm_\ku \ku/m \right) = 0$ for $*<0$).

By Lemma~\ref{rep}, choosing generators in $\pi_0 (\ku/n\sm_\ku \ku/m)$ and $\pi_1 (\ku/n\sm_\ku \ku/m)$ yields a map of $\ku$--modules
$$\ku/\gcd(m,n) \vee \Susp \ku/\gcd(m,n) \maps \ku/n\sm_\ku \ku/m,$$
and to show that this map is a weak equivalence it suffices to check that the Bott map is an isomorphism (in all non-negative degrees) on both of these $\ku$--modules.
For any $l\in \bbZ$, the Bott map $\pi_* \ku/l \to \pi_{*+2} \ku/l$ is an isomorphism for all $* \geqs 0$, as can be seen by applying the 5-lemma to the diagram of cofiber sequences
$$\xymatrix{ \Susp^2 \ku \ar[r]^{\Susp^2 m_l} \ar[d]^\beta & \Susp^2 \ku \ar[r] \ar[d]^\beta & \Susp^2 (\ku/l)\ar[d]^\beta \\
		    \ku \ar[r]^{m_l} & \ku \ar[r] & \ku/l.
		   }
$$
Similarly, the Bott map gives a map of cofiber sequences connecting the suspension of (\ref{gcd}) to (\ref{gcd}), completing the proof.
\end{proof}

\begin{remark} This result is similar in nature to~\cite[V.1.10]{EKMM}.  In comparing with~\cite[V.1]{EKMM}, it is important to keep in mind that the sequence $(m, n)$ in $\pi_0 \ku$ is \emph{not} a regular sequence in the ring $\pi_* \ku$, unless $m$ and $n$ are relatively prime.  In other words, when $\gcd(m,n)\neq 1$, the element $[m]\in \Z/n\Z$ is a zero-divisor.  
We note that Lemma~\ref{ku/n} can also be proven using the K\"{u}nneth spectral sequence~\cite[IV.6]{EKMM}.
\end{remark}

\begin{proposition}$\label{non-or-prod}$ Let $X$ be a product of aspherical orientable surfaces and circles, and let $Y$ be a product of $d$ non-orientable aspherical surfaces.  Then there is a morphism
$$\K(\pi_1 (X\cross Y)) \maps \wt{F} ((X\cross Y)_+, \ku)$$
in the derived category of $\ku$--modules that induces an isomorphism on $\pi_*$ for $*> \dim X + d - 2$.
Moreover, in dimension $\dim X + d -2$ the groups $\pi_* (\wt{F} (X_+, \ku))$ and $\K_* (X)$ are \emph{not} isomorphic.
\end{proposition}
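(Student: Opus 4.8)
The idea is to run the argument of Theorem~\ref{orient-prod}, now using Lemma~\ref{ku/n} to keep track of the $2$-torsion introduced by the non-orientable factors.  Write $X$ as a product of $r$ aspherical orientable surfaces $M^{g_1},\dots ,M^{g_r}$ and $c$ circles, so $\dim X = 2r+c$, and $Y=\Sigma_1\cross\cdots\cross\Sigma_d$; since each $\Sigma_j$ is aspherical, $k_j:=\textrm{rank}\,H^1(\Sigma_j)\geqs 1$.  Iterating the product formula~(\ref{prod-form}) identifies $\K(\pi_1(X\cross Y))$, up to weak equivalence of $\ku$--modules, with the $\ku$--relative smash product of the spectra $\K(\pi_1 F)$ over all factors $F$.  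Substituting $\K(\pi_1 S^1)\heq\ku\vee\Susp\ku$ (Proposition~\ref{free-Bott}), $\K(\pi_1 M^{g_s})\heq\ku\vee\bigvee_{2g_s}\Susp\ku\vee\Susp^2\ku$ and $\K(\pi_1\Sigma_j)\heq\ku\vee\bigvee_{k_j}\Susp\ku\vee\ku/2$ (Theorem~\ref{A-S}), distributing $\sm_{\ku}$ over these wedges, and repeatedly applying Lemma~\ref{ku/n} with $m=n=2$ to rewrite $\ku/2\sm_{\ku}\ku/2\heq\ku/2\vee\Susp\ku/2$, yields a weak equivalence
$$\K(\pi_1(X\cross Y))\ \heq\ \Big(\bigvee_i\Susp^{a_i}\ku\Big)\vee\Big(\bigvee_j\Susp^{b_j}\ku/2\Big)$$
for explicit finite multisets $\{a_i\},\{b_j\}\subset\bbN$.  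Counting degrees shows $\max_i a_i = 2r+c+d = \dim X+d = \textrm{qcd}(X\cross Y)$, attained with multiplicity $\prod_j k_j$, while $\max_j b_j = \dim X+d-1$; hence multiplication by the Bott element is an isomorphism on $\pi_n$ of this spectrum for every $n\geqs \dim X+d-1$.

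For the target, $\pi_*\wt F((X\cross Y)_+,\ku)\isom K^{-*}(X\cross Y)$ for $*\geqs 0$ by~(\ref{ku-KU}); the Künneth theorem in topological $K$--theory (using that $K^*(X)$ is torsion-free) computes these groups, and since $X\cross Y$ carries only $2$-torsion in integral cohomology the same holds for $K^*(X\cross Y)$.  A routine application of Lemma~\ref{rep} then exhibits $\wt F((X\cross Y)_+,\ku)$ as a wedge of copies of $\Susp^{\varepsilon}\ku$ and $\Susp^{\varepsilon}\ku/2$ with $\varepsilon\in\{0,1\}$, the number of free summands in degree $\varepsilon$ being $\textrm{rank}\,K^{\varepsilon}(X\cross Y)$, and multiplication by $\beta$ is an isomorphism on $\pi_n$ for every $n\geqs 0$.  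Now choose, for each wedge summand $\Susp^{a_i}\ku$ of the source a class in $\pi_{a_i}\wt F((X\cross Y)_+,\ku)$, and for each $\Susp^{b_j}\ku/2$ a class of order $2$ there; Lemma~\ref{rep} assembles these into a morphism $\K(\pi_1(X\cross Y))\maps\wt F((X\cross Y)_+,\ku)$ in the derived category of $\ku$--modules.  The classes are to be picked so that the induced map is surjective on $\pi_n$ for every $n>\dim X+d-2$; since $\beta$ acts isomorphically on $\pi_n$ of both source and target in that range, the inductive argument from the proof of Theorem~\ref{A-S} upgrades surjectivity to an isomorphism on $\pi_n$ for all $n>\dim X+d-2$.

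It remains to check sharpness, i.e.\ that the two homotopy groups differ in degree $\dim X+d-2$.  By the bookkeeping of the first paragraph, the free summands of $\K(\pi_1(X\cross Y))$ contributing to $\pi_{\dim X+d-2}$ omit precisely the top free generator, which lives two degrees higher, at $\dim X+d$, with multiplicity $\prod_j k_j$; on the function-spectrum side the corresponding free generators sit in degree $0$ or $1$ and have already entered degree $\dim X+d-2$.  As total ranks agree parity-by-parity (they do so rationally), this gives
$$\textrm{rank}\,\K_{\dim X+d-2}(\pi_1(X\cross Y))\ =\ \textrm{rank}\,\pi_{\dim X+d-2}\wt F((X\cross Y)_+,\ku)\ -\ \textstyle\prod_j k_j,$$
and $\prod_j k_j\geqs 1$, so these groups are not isomorphic.  (For $d=0$ this recovers Theorem~\ref{orient-prod}, with $\textrm{qcd}(X)=\dim X$ and empty product $1$; the sharpness assertion is of course to be read under the hypothesis $\dim X+d\geqs 2$.)

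\textbf{Main obstacle.}  The delicate point is the torsion.  A priori the $\ku/2$--summands of $\K(\pi_1(X\cross Y))$ and the $2$-torsion of $\wt F((X\cross Y)_+,\ku)$ can occur in different degrees --- compare Example~\ref{torsion} together with the degree shift in Lemma~\ref{ku/n} --- so one must verify that any such mismatch is confined to degrees $\leqs\dim X+d-2$, and that the order-$2$ classes fed into Lemma~\ref{rep} can be chosen so that the comparison map is an isomorphism on all torsion above that degree; this amounts to an Atiyah--Segal-type comparison in the stable range, which one establishes by induction on the number of factors using Theorem~\ref{A-S} and the product formulas on both sides.  Carrying out the Künneth computation and recognizing the function spectrum as a wedge of $\Susp^{\varepsilon}\ku$'s and $\Susp^{\varepsilon}\ku/2$'s (equivalently, controlling the non-connective function spectra entering the product formula and then passing to connective covers) is the accompanying bookkeeping burden.
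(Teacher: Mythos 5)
Your proposal is correct and follows essentially the same route as the paper: iterate Lawson's product formula with Theorem~\ref{A-S}, Proposition~\ref{free-Bott} and Lemma~\ref{ku/n} to write $\K(\pi_1(X\cross Y))$ as a wedge of shifted copies of $\ku$ and $\ku/2$, match the summand counts against the K\"unneth computation of $K^*(X\cross Y)$, build the comparison map with Lemma~\ref{rep}, and propagate with the Bott map; the sharpness claim likewise comes from the lone top free summand in degree $\dim X+d$. Your explicit identification of $\max_i a_i=\dim X+d$ (multiplicity $\prod_j k_j$) and $\max_j b_j=\dim X+d-1$ is just a repackaging of the recurrences for $r_0,r_1,t_0,t_1$ that the paper writes down.
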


\begin{remark} Note that $\dim X + d$ is precisely the \emph{rational} cohomological dimension of the classifying space $X\cross Y$.  Hence as mentioned above, the bound in Proposition~\ref{non-or-prod} bears a striking resemblance to the Quillen--Lichtenbaum conjectures.
\end{remark}

\begin{proof}
We will discuss the case when $X$ is trivial; the full result comes from combining this case with Theorem~\ref{orient-prod}.   We proceed by induction on $d$.  Let $\Sigma$ be a non-orientable surface, and $Y$ be a product of $d$ non-orientable surfaces for which the theorem holds.

The $\ku$--module $\K(Y\cross \Sigma)$ can be computed by induction using Lawson's product formula and Lemma~\ref{ku/n}, and the fact that smash products distribute over wedges.  If we let $r_0 (-)$ and $r_1(-)$ denote the total number of even- and odd-suspended $\ku$--summands in $\K(-)$,
then by Theorem~\ref{A-S} we have $r_0 (\Sigma) = 1$, and one finds that these numbers satisfy the recurrence
\begin{equation*}
r_0 (Y\cross \Sigma) = r_0 (Y) + r_1 (Y) r_1 (\Sigma)
\textrm{\,\,\,and\,\,\,}  r_1 (Y\cross \Sigma) = r_1 (Y) + r_0 (Y) r_1 (\Sigma).
\end{equation*}
If we let $t_0$ and $t_1$ denote the the total number of even- and odd-suspended 
$(\ku/2)$--summands in $\K(-)$) then $t_0 \Sigma = 1$, $t_1 \Sigma = 0$, and we find that
\begin{equation*}\begin{split}
t_0 (Y\cross \Sigma) = 2 t_0 (Y) + (r_1 (\Sigma) +1) t_1 (Y) + r_0 (Y) \\
\textrm{and\,\,\,} t_1 (Y\cross \Sigma) = 2 t_1 (Y) + (r_1 (\Sigma) +1) t_0 (Y) + r_1 (Y).
\end{split}
\end{equation*}

One can compute the groups $\pi_* \wt{F} ((X\cross Y)_+, \ku) \isom K^{-*} (X\cross Y)$ by a similar induction.  The K\"{u}nneth sequence in $K$--theory gives a (non-naturally) split exact sequence
$$0\maps K^*(Y) \otimes K^*(\Sigma) \srm{\alpha} K^*(Y\cross \Sigma) \srm{\gamma} \Tor (K^*Y, K^*\Sigma) \maps 0
$$
(see Bodigheimer~\cite{Bodigheimer}, for example).  Here we consider $K^* = K^0\oplus K^1$ as a $(\bbZ/2\bbZ)$--graded ring, and the maps $\alpha$ and $\gamma$ have degrees 0 and 1, respectively.  Also note that the $\Tor$ term is graded by giving $\Tor (K^pY, K^q \Sigma)$ grading $p+q$ (mod $2$).  
Letting $r'_0 (-)$ and $r'_1(-)$ denote the ranks of $K^0 (-)$ and $K^1 (-)$, 
and letting $t_0$ and $t_1$ denote the ranks over $\bbF_2$ of the torsion subgroups of $K^0$ and $K^1$ (which are always $\bbF_2$--vector spaces), we find that these numbers satisfy the same recurrence relations as $r_0, r_1, t_0$, and $t_1$.  The desired results now follows as in the proof of Theorem~\ref{A-S}.
\end{proof}

We can make some conclusions about the stable moduli space for products of non-orientable surfaces, although its relationship to cohomology is not as simple. 

\begin{proposition} Let $X$ be a product of aspherical surfaces and circles.  Then for any $*\geqs 0$, there is an isomorphism
$$\pi_* \Mf (X) \otimes \bbQ \isom \wt{H}^* (X; \bbQ),$$
and if $\Tor (-)$ denotes the torsion subgroup, then for $i = 0, 1$ we have isomorphisms
$$\bigoplus_{j \equiv i (\textrm{mod\,} 2)} \Tor (\pi_j \Mf (X)) \isom \bigoplus_{j \equiv i (\textrm{mod\,} 2)} \Tor (\wt{H}^j (X)).$$
Moreover, $\pi_* \Mf (X) = 0$ for $*$ greater than the rational cohomological dimension of $X$.
\end{proposition}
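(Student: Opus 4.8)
The plan is to reduce the statement to a computation of $\Rdef_*(\pi_1 X)$ and then compute $\Rdef(\pi_1 X)$ explicitly as a wedge of Eilenberg--MacLane spectra. Since $X$ is a product of aspherical surfaces and circles, $\Rep(\pi_1 X)$ is stably group-like (Lemma~\ref{pi_0}), so Lemma~\ref{EM2} identifies $\pi_n\Mf(X)$ with $\Rdef_n(\pi_1 X)$ for $n\geqs 1$ and gives $\Rdef_0(\pi_1 X)\isom\bbZ\oplus\pi_0\Mf(X)$. It therefore suffices to determine the homotopy groups of $\Rdef(\pi_1 X)$.

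First I would compute $\K(\pi_1 X)$ as a $\ku$--module. By Theorem~\ref{A-S} each aspherical surface factor contributes a finite wedge of copies of $\ku,\Susp\ku,\Susp^2\ku$ and $\ku/2$, while $\K(\bbZ)\heq\ku\vee\Susp\ku$ by Proposition~\ref{free-Bott}. Combining factors via Lawson's product formula $\K(G\cross H)\heq\K(G)\sm_\ku\K(H)$, using distributivity of $\sm_\ku$ over wedges, the identities $\Susp^a\ku\sm_\ku\Susp^b\ku\heq\Susp^{a+b}\ku$ and $\Susp^a\ku\sm_\ku\Susp^b(\ku/2)\heq\Susp^{a+b}(\ku/2)$, and Lemma~\ref{ku/n} (which gives $\ku/2\sm_\ku\ku/2\heq\ku/2\vee\Susp\ku/2$, so only $2$--torsion ever appears), an induction on the number of factors shows
$$\K(\pi_1 X)\heq \bv_a\Susp^a\ku^{\vee p_a}\ \vee\ \bv_a\Susp^a(\ku/2)^{\vee q_a}$$
for nonnegative integers $p_a,q_a$, finitely many nonzero. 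The Bott map is multiplication by $\beta\in\pi_2\ku$, so it restricts to the Bott map on each wedge summand; its cofiber on $\Susp^a\ku$ is $\Susp^a H\bbZ$ and on $\Susp^a(\ku/2)$ is $\Susp^a H(\bbZ/2)$, so Theorem~\ref{Lawson-cofiber} gives
$$\Rdef(\pi_1 X)\heq \bv_a\Susp^a(H\bbZ)^{\vee p_a}\ \vee\ \bv_a\Susp^a(H(\bbZ/2))^{\vee q_a},\qquad \Rdef_n(\pi_1 X)\isom\bbZ^{p_n}\oplus(\bbZ/2)^{q_n}.$$

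Next I would identify $p_a$ and control $q_a$ by tracking the generating functions $P_X(t)=\sum_a p_a t^a$ and $Q_X(t)=\sum_a q_a t^a$. Re--running the induction, a $\ku$--summand of a smash product arises only from smashing a $\ku$--summand with a $\ku$--summand, so $P_{X\cross Y}(t)=P_X(t)P_Y(t)$; this is the recurrence satisfied by $\sum_a(\dim_\bbQ H^a(-;\bbQ))t^a$ under the K\"unneth theorem, and the base cases agree ($P_{M^g}=1+2gt+t^2$, $P_\Sigma=1+kt$, $P_{S^1}=1+t$), so $p_a=\dim_\bbQ H^a(X;\bbQ)$ for all $a$; in particular $p_0=1$. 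A parallel bookkeeping for the $\ku/2$--summands gives $Q_{X\cross Y}=P_XQ_Y+Q_XP_Y+(1+t)Q_XQ_Y$, with $Q_X=0$ when every factor is orientable or a circle; feeding in $Q_\Sigma=1$ (so $\deg Q_\Sigma=0=\mathrm{qcd}(\Sigma)-1$) together with $\deg P_X=\mathrm{qcd}(X)$ and $\mathrm{qcd}(X\cross Y)=\mathrm{qcd}(X)+\mathrm{qcd}(Y)$, an induction yields $\deg Q_X\leqs\mathrm{qcd}(X)-1$ as soon as a non--orientable factor is present. Hence $\Rdef_a(\pi_1 X)\otimes\bbQ\isom H^a(X;\bbQ)$ for every $a$, and $\Rdef_a(\pi_1 X)=0$ for $a>\mathrm{qcd}(X)$ (there $p_a=0$ by definition of $\mathrm{qcd}$ and $q_a=0$ by the degree bound). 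Translating back through Lemma~\ref{EM2} --- with $\pi_0\Mf(X)\isom(\bbZ/2)^{q_0}$ since $p_0=1$ --- gives $\pi_*\Mf(X)\otimes\bbQ\isom\wt H^*(X;\bbQ)$ in every degree $*\geqs 0$ and the vanishing above $\mathrm{qcd}(X)$.

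Finally, for the torsion statement: $\Tor(\pi_j\Mf(X))\isom(\bbZ/2)^{q_j}$ for every $j$, so $\bigoplus_{j\equiv i}\Tor(\pi_j\Mf(X))\isom(\bbZ/2)^{t_i}$ with $t_i=\sum_{j\equiv i\,(\mathrm{mod}\,2)}q_j$. Taking parity sums of the recurrence for $Q$ shows that $t_0,t_1$ satisfy exactly the recurrences recorded in the proof of Proposition~\ref{non-or-prod}, and the $\bbF_2$--ranks of $\bigoplus_{j\equiv i}\Tor\wt H^j(X;\bbZ)$ satisfy the same recurrences under the K\"unneth theorem in integral cohomology (the extra degree shift in the $\Tor$ term only permutes the two parities, so it disappears after summing over a parity class); the two agree on the base case of a single non--orientable surface, where $\wt H^*(\Sigma;\bbZ)$ contributes a single $\bbZ/2$ in even total degree matching the single $\ku/2$ of $\K(\pi_1\Sigma)$. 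Since all torsion in sight is $2$--torsion, equality of ranks gives the asserted isomorphisms. The main obstacle I anticipate is the third step: the $\ku/2$--summands are the only source of torsion, their degrees genuinely migrate under products (which is precisely why the theorem is stated only after summing over a parity class), and the smash--product rule from Lemma~\ref{ku/n} carries an extra suspension, so establishing both the sharp bound $\deg Q_X\leqs\mathrm{qcd}(X)-1$ and the matching of parity--summed torsion requires carrying these strict inequalities and the parity bookkeeping through the induction rather than arguing degree by degree.
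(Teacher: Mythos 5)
Your proposal is correct and follows essentially the same route as the paper: reduce to $\Rdef_*(\pi_1 X)$ via Lemma~\ref{EM2}, decompose $\K(\pi_1 X)$ into $\Susp^a\ku$ and $\Susp^a(\ku/2)$ summands using Lawson's product formula and Lemma~\ref{ku/n}, read off $\pi_*\Rdef$ from the Bott cofiber on each summand, and match the resulting recurrences (which are exactly those from the proof of Proposition~\ref{non-or-prod}) against the K\"unneth theorem. Your generating-function bookkeeping, the explicit degree bound $\deg Q_X \leqs \mathrm{qcd}(X)-1$, and the observation that the degree-$(-1)$ shift in the cohomological $\Tor$ term versus the degree-$(+1)$ shift from $\ku/2\sm_\ku\ku/2$ wash out after summing over a parity class are all correct fillings-in of details the paper leaves implicit.
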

\begin{proof}  Again, we will just discuss the case in which $X$ is a product of non-orientable aspherical surfaces.  Recall (Lemma~\ref{EM}) that $\Rdef_*(\pi_1 X)$ and $\pi_* \Mf(X)$ agree for $*>0$; the difference in dimension zero is accounted for by using reduced cohomology.  
To show that the free summands in $\pi_* \Rdef$ and $\wt{H}^* (X; \bbZ)$
appear in the same dimensions, one proceeds by induction, noting that the $\bbZ$ factors in the homotopy of $\Rdef$ simply record the dimensions in which the wedge summands $\Susp^i \ku$ appear.  Similarly, the $\bbZ/2$ factors in the homotopy of $\Rdef$ record the dimensions in which the wedge summands $\Susp^i \ku/2$ appear.  One may deduce recurrence relations, similar to those in the proof of Proposition~\ref{non-or-prod}, describing 
$\bigoplus_{j \equiv i (\textrm{mod\,} 2)} \Tor (\pi_j \Mf (Y\cross \Sigma))$ in terms of the corresponding values for $Y$ and $\Sigma$ (where $Y$ is a product of aspherical surfaces and circles, and $\Sigma$ is an aspherical surface).  Using the K\"{u}nneth Theorem, one finds that the numbers  $\bigoplus_{j \equiv i (\textrm{mod\,} 2)} \Tor (\wt{H}^j (X))$ satisfy the same recurrences.
\end{proof}

\begin{example}$\label{torsion}$ The torsion in $\Rdef_* \Gamma$ (or equivalently $\pi_* (\Mf B\Gamma)$) can behave rather erratically in comparison with the cohomology of $\Gamma$.  For example, consider the product $(\bbRP^2 \# \bbRP^2)^3$.  The integral cohomology groups of this space, in increasing order of dimension, are:
$$\bbZ,  \,\,\bbZ^3,  \,\,\bbZ^3\oplus (\bbZ/2\bbZ)^3, \,\, \bbZ\oplus (\bbZ/2\bbZ)^9,  \,\,(\bbZ/2\bbZ)^{10},  \,\,(\bbZ/2\bbZ)^5, \,\, \bbZ/2\bbZ, \,\, 0, \,\, 0, \ldots . $$
On the other hand, the homotopy groups of $\Mf (\bbRP^2 \# \bbRP^2)^3$ are 
$$\bbZ \oplus  (\bbZ/2\bbZ)^7,  \,\,\bbZ^3 \oplus  (\bbZ/2\bbZ)^{14},  \,\, \bbZ^3 \oplus (\bbZ/2\bbZ)^7, \,\,\bbZ, \,\, 0, \,\, 0, \ldots.$$
\end{example}

\end{document}